\definecolor{liens}{rgb}{1,0,0}
\newtheorem{theo}{Theorem}[section]
\newtheorem*{theointro}{Theorem}
\newtheorem{lem}[theo]{Lemma}
\newtheorem{propo}[theo]{Proposition}
\newtheorem{coro}[theo]{Corollary}
\newtheorem*{corointro}{Corollary}
\theoremstyle{definition}
\newtheorem{hypothese}[theo]{Assumption}
\theoremstyle{remark}
\newtheorem{rem}[theo]{Remark}
\def\pd{(\phi,\delta)}
\def\de{\delta}
\def\Z{\mathbb{Z}}
\def\C{\mathbb{C}}
\def\N{\mathbb{N}}
\def\Q{\mathbb{Q}}
\def\n{\eta}
\def\s{\sigma}
\def\d{\delta}
\def\b{\beta}
\def\n'{\nu}
\def\d{\delta}
\def\k{\kappa}
\def\L{\Lambda}
\def\Gal {\mathrm{Gal}}
\def\GL {\mathrm{GL}}
\def\SL {\mathrm{SL}}
\def\cQ{\mathcal{Q}}
\def\Galdelta {\mathrm{Gal}^\delta}
\newcommand{\circKol}{\circ_{\delta}}
\newcommand{\derKol}{der_{\delta}}
\def\K{\mathbf{K}}
\def\k{\mathbf{k}}
\def\Const{\mathbf{C}}
\def\L{\mathbf{L}}
\begin{document}

\sloppy

\title{Hypertranscendence of solutions of Mahler equations}
\author{Thomas Dreyfus}
\address{Institut de Recherche Math\'ematique Avanc\'ee, U.M.R. 7501 Universit\'e de Strasbourg et C.N.R.S. 7, rue Ren\'e Descartes 67084 Strasbourg, FRANCE}
\email{dreyfus@math.unistra.fr}
\author{Charlotte Hardouin}
\address{Universit\'e Paul Sabatier - Institut de Math\'ematiques de Toulouse, 118 route de Narbonne, 31062 Toulouse.}
\email{hardouin@math.univ-toulouse.fr}
\author{Julien Roques}
\address{Universit\'e Grenoble Alpes, Institut Fourier,  CNRS UMR 5582, 100 rue des Maths, BP 74, 38402 St Martin d'H\`eres}
\email{Julien.Roques@ujf-grenoble.fr}

\keywords{Mahler functions, Automatic sequences, Difference Galois theory, Parametrized difference Galois theory.}
\thanks{The first (resp. the second) author would like to thank the ANR-11-LABX-0040-CIMI within
the program ANR-11-IDEX-0002-0 for its total (resp. partial) support. The second author's work is also supported
by ANR Iso-Galois. This project has received funding from the European Research Council (ERC) under the European Union's Horizon 2020 research and innovation programme under the Grant Agreement No 648132.}
\subjclass[2010]{39A06,12H10}
\date{\today}
\maketitle

\vspace{-0.3cm}

\begin{abstract} 
The last years have seen a growing interest from mathematicians in Mahler functions. This class of functions includes the generating series of the automatic sequences. 
The present paper is concerned with the following problem, which is rather frequently encountered in combinatorics: a set of Mahler functions $u_{1},...,u_{n}$ being given, are $u_{1},...,u_{n}$ and their successive derivatives algebraically independent? 
In this paper, we give general criteria ensuring an affirmative answer to this question. We apply our main results to the generating series attached to the so-called Baum-Sweet and Rudin-Shapiro automatic sequences. In particular, we show that these series are hyperalgebraically independent, {\it i.e.}, that these series and their successive derivatives are algebraically independent. 
Our approach relies of the parametrized difference Galois theory (in this context, the algebro-differential relations between the solutions of a given Mahler equation are reflected by a linear differential algebraic group). 
\end{abstract} 

\tableofcontents
%\dedicatory{u}

\section*{Introduction}

This paper grew out of an attempt to understand the algebraic relations between classical Mahler functions and their successive derivatives. By Mahler function, we mean a function $f(z)$ such that  
\begin{equation}\label{equa generique intro}
a_{n}(z) f(z^{p^{n}}) + a_{n-1}(z) f(z^{p^{n-1}}) + \cdots + a_{0}(z) f(z) = 0
\end{equation}
for some integers $p\geq 2$, $n \geq 1$, and some $a_{0}(z),\dots,a_{n}(z) \in \C(z)$ with ${a_{0}(z)a_{n}(z)\neq 0}$. 

The study of this class of 
functions was originally motivated by the work of Mahler in 
\cite{MahlerArith1929,MahlerArith1930,MahlerUber1930} about the algebraic relations between 
special values at algebraic points of Mahler functions. This arithmetic aspect of the theory of the Mahler functions was developed further by several authors, {\it e.g.}, Becker, Kubota, Loxton, van der Poorten, Masser, Nishioka, T\"ofer. We refer to Nishioka's book \cite{NishiokaLNM1631} and Pellarin's paper \cite{pellarinAIMM} for more informations and references. We shall simply mention that, quite recently, Philippon \cite{PhilipponGaloisNA} proved a refinement of Nishioka's analogue of the Siegel-Shidlovski theorem, in the spirit of Beukers' refinement of the Siegel-Shidlovski theorem~\cite{BRVSiegelShid}. See also \cite{adamczewski2016m,adamczewski2017methode}. Roughly speaking, it says that the algebraic relations over $\overline{\Q}$ between 
the above-mentioned special values
come from algebraic relations over $\overline{\Q}(z)$ between the functions themselves. These functional relations are at the heart of the present paper. 

The renewed attractiveness of the theory of Mahler functions comes (to a large extent) from its close connection with automata theory: the generating series ${f(z)=\sum_{k \geq 0} s_{k} z^{k}}$ of any 
$p$-automatic sequence $(s_{k})_{k \geq 0} \in \overline{\Q}^{\N}$ (and, actually, of any $p$-regular sequence) is a Mahler function; 
see Mend\`es France \cite{MF80}, Rand\'e \cite{RandeThese}, Dumas \cite{DumasThese}, Becker \cite{BeckerKReg}, Adamczewski and Bell \cite{BorisAboutMahler}, and the references therein. The famous examples are the generating series of the Thue-Morse, the paper-folding, the Baum-Sweet and the Rudin-Shapiro sequences (see Allouche and Shallit's book \cite{AS03}). 

The Mahler functions also appear in many other circumstances such as the combinatorics of partitions, the enumeration of words and
the analysis of algorithms of the type  divide and conquer; see for instance \cite{DF96} and the references therein. 

It is a classical problem (in combinatorics in particular) to determine whether or not a given generating series is transcendental or even  hypertranscendental over $\C(z)$\footnote{We say that a series $f(z) \in \C((z))$ is hypertranscendental over $\C(z)$ if $f(z)$ and all its derivatives are algebraically independent over $\C(z)$.}. 

The hypertranscendence over $\C(z)$ of Mahler functions solutions of  inhomogeneous Mahler equations of order one can be studied by using the work of Nishioka~\cite{NishiokaLNM1631}; see also the work of Nguyen~\cite{NG,NGT} {\it via} difference Galois theory. This can be applied to the paper-folding generating series for instance. Actually, Rand\'e already studied in \cite{RandeThese} the functions $f(z)$ meromorphic over the unit disc $D(0,1) \subset \C$ which are solutions of some inhomogeneous Mahler  equation of order one with coefficients in $\C(z)$: he proved that, if $f(z)$ is hyperalgebraic over $\C(z)$, then $f(z) \in \C(z)$ (see \cite[Chapitre 5, Th\'eor\`eme 5.2]{RandeThese}).   

The present work started with the observation that, besides this case, very few things are known. For instance, the hypertranscendence of  the Baum-Sweet or of the Rudin-Shapiro generating series was not known. 
The main objective of the present work is to develop an approach, as systematic as possible, in order to prove the hypertranscendence of such series.

To give an idea of the contents of this paper, we mention the following result (see Theorem \ref{user friend theo}), 
which is a consequence of one of our main hypertranscendence criteria. In what follows, we consider the field $\K=\cup_{j \geq 1} \C(z^{1/j})$ endowed with the field automorphism $\phi$ given by $\phi(f(z))=f(z^{p})$. We obtain in this way a difference field with field of constants $\K^{\phi}=\C$, and we have at our disposal a difference Galois theory over $\K$ (see Section \ref{rappels et complements galois differences}).

\begin{theointro}\label{theo intro un}
Assume that the difference Galois group over $\K$ of the Mahler equation (\ref{equa generique intro}) contains $\SL_{n}(\C)$ and that $a_{n}(z)/a_{0}(z)$ is a monomial. 
Let $f(z) \in \C((z))$ be a nonzero solution of (\ref{equa generique intro}). Then, the series $f(z), f(z^{p}),\ldots,f(z^{p^{n-1}})$ and all their successive derivatives are algebraically independent over $\C(z)$. In particular, $f(z)$ is hypertranscendental over $\C(z)$. 
\end{theointro}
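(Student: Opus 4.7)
My plan is to apply the machinery of parametrized difference Galois theory. Let $A \in \GL_{n}(\K)$ be the companion matrix of (\ref{equa generique intro}) and let $\mathcal{U}$ be a fundamental matrix of solutions in a parametrized Picard-Vessiot extension of $\K$, built with respect to a derivation $\delta$ (for instance $\delta = z\,d/dz$, which satisfies $\delta \phi = p\, \phi \delta$, a skew-commutation that the theory accommodates). The first row of $\mathcal{U}$ can be arranged to be $f(z), f(z^{p}), \dots, f(z^{p^{n-1}})$. Denote by $H$ the parametrized Galois group, a linear differential algebraic subgroup of $\GL_{n}(\C)$, and by $G$ its Zariski closure, which is the classical difference Galois group; by hypothesis, $G \supseteq \SL_{n}(\C)$.

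The main task is to show that $H \cap \SL_{n}(\C) = \SL_{n}(\C)$ \emph{as a differential algebraic group}. Since $\SL_{n}$ is almost simple, Cassidy's classification of Kolchin-closed subgroups ensures that a proper Zariski-dense differential algebraic subgroup of $\SL_{n}(\C)$ is, up to differential conjugation, of the form $\SL_{n}(\mathbf{k})$ for some proper $\delta$-subfield $\mathbf{k}$ of $\C$. By the standard dictionary of parametrized Galois theory, the presence of such a subgroup is equivalent to the solvability in $\Mat_{n}(\K)$ of an integrability equation of the form $\phi(B) - A B A^{-1} = \delta(A) A^{-1}$. Taking traces projects this to a scalar inhomogeneous $\phi$-equation whose right-hand side is, up to a scalar, the logarithmic $\delta$-derivative of $a_{n}/a_{0}$. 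The monomial assumption on $a_{n}/a_{0}$ makes this logarithmic derivative a constant in $\C$, and a direct $\phi$-cohomological analysis over $\K$ then excludes the existence of a nontrivial solution $B$, confirming that $H$ contains $\SL_{n}(\C)$ as a Kolchin-closed subgroup.

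Once $H \supseteq \SL_{n}(\C)$ is established in the differential algebraic sense, the parametrized Galois correspondence yields that the entries of $\mathcal{U}$ together with all their iterated $\delta$-images are algebraically independent over $\K$. Restricting to the first row, this gives the algebraic independence over $\K$, and a fortiori over $\C(z)$, of $f(z), f(z^{p}), \dots, f(z^{p^{n-1}})$ together with all their $\delta$-derivatives, which is equivalent to independence of all their $d/dz$-derivatives since $\delta = z\,d/dz$ and $z \in \K$.

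The main obstacle I foresee is the middle step: rigorously showing that the monomial condition on $a_{n}/a_{0}$ prevents the integrability equation from having a nontrivial solution in $\Mat_{n}(\K)$. This requires a careful $\phi$-cohomological computation over $\K = \bigcup_{j \geq 1} \C(z^{1/j})$, exploiting that elements of $\K$ are Puiseux polynomials in $z$ while $\phi$ rescales exponents by $p$, so that the image of $\phi - \mathrm{id}$ is explicitly describable, and the logarithmic $\delta$-derivative of a monomial lies outside this image except in trivial scalar cases. This is the Mahler analogue of the ``no nontrivial telescoper'' obstruction familiar from the parametrized Galois theory of differential and shift equations.
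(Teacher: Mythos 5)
There is a genuine gap, and it sits exactly where you flagged your ``main obstacle.'' Your plan is to exclude the isomonodromic degeneration by taking traces of the integrability equation $\phi(B)-ABA^{-1}=\delta(A)A^{-1}$ and arguing that the monomial hypothesis on $a_n/a_0$ obstructs the resulting scalar equation. But the trace of that equation is $p\phi(\operatorname{tr}B)-\operatorname{tr}B=\vartheta(\det A)/\det A$, and when $\det A=\pm a_0/a_n$ is a monomial $cz^m$ the right-hand side is the \emph{constant} $m$, so the scalar equation is solved by $\operatorname{tr}B=m/(p-1)\in\C$. The monomial hypothesis therefore works in the \emph{opposite} direction from what you need: it guarantees that the rank-one (determinant) integrability equation \emph{is} solvable, i.e.\ that $\det U$ is hyperalgebraic — this is precisely Proposition \ref{propo:mahlerrang1homoge} and it is what places the group inside $\C^{\times}\SL_n(\widetilde{\C})$ rather than forcing it to be all of $\GL_n(\widetilde{\C})$. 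It gives no obstruction at all to the full matrix equation. The paper's actual exclusion of the isomonodromic case (proof of Theorem \ref{theo hyper alg}) is entirely different: if a rational $B$ existed, integrability would make the span of the entries of a fundamental matrix finite-dimensional over $\cup_j\widetilde{\C}(z^{1/j})$ and stable under $\vartheta$, so the formal solution column would satisfy linear ODEs as well as a Mahler equation; by B\'ezivin's theorem (\cite[Theorem 1.3]{Be94}, via Lemma \ref{sol commune}) those entries would then be rational, hence fixed by the classical Galois group, contradicting the hypothesis that it contains $\SL_n(\C)$. Your proposal contains no substitute for this argument, and the ``$\phi$-cohomological'' computation you defer to would come out the wrong way.

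Two further points need repair. First, $\delta=z\,d/dz$ satisfies $\delta\phi=p\,\phi\delta$ and does \emph{not} commute with $\phi$; the Hardouin--Singer theory you invoke requires commutation, which is why the paper passes to $\K(\log z)$ with $\delta=z\log(z)\,d/dz$ (and then to $\L=\operatorname{Frac}(\widetilde{\C}\otimes_{\C}\K')$ to get differentially closed $\phi$-constants, with descent arguments to come back down to $\K$). Saying ``the theory accommodates'' the skew-commutation is not enough; the entire architecture of Sections \ref{sec:parampv}--\ref{sec hyptr} exists to handle this. Second, your concluding step overclaims: since $\det U$ is hyperalgebraic, the full set of entries of $\mathcal{U}$ and their derivatives cannot be algebraically independent. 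The correct deduction (proof of Theorem \ref{user friend theo syst}) is a $\delta$-dimension count: the group $\C^{\times}\SL_n(\widetilde{\C})$ has $\delta$-dimension $n^2-1$, and a hyperalgebraic relation among the entries of a single column, together with the hyperalgebraicity of the determinant, would force the $\delta$-transcendence degree down to $n^2-2$, a contradiction.
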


The hypothesis that $a_{n}(z)/a_{0}(z)$ is a monomial is satisfied in any of the above-mentioned cases. 
Moreover, in the case $n=2$, there is an algorithm to determine whether or not the difference Galois group over $\K$ of equation (\ref{equa generique intro}) contains $\SL_2(\C)$; see \cite{Ro15}. 
It turns out that the difference Galois groups involved in the Baum-Sweet and in the Rudin-Shapiro cases both contain $\SL_{2}(\C)$ (see \cite[Section 9]{Ro15}). 
Therefore, we have the following consequences of the above theorem (see Theorems \ref{Galois BS} and \ref{Galois RS}). 
In what follows, we let $f_{BS}(z)$ and $f_{RS}(z)$ denote the generating series of the Baum-Sweet and of the Rudin-Shapiro sequences.

\begin{corointro}
The series $f_{BS}(z)$, $f_{BS}(z^{2})$ and all their successive derivatives are algebraically independent over $\C(z)$. In particular, $f_{BS}(z)$ is hypertranscendental over $\C(z)$. 
\end{corointro}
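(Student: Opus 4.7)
The plan is to reduce the corollary to a direct application of Theorem~\ref{theo intro un} with $n=2$ and $p=2$. To do this, I need three ingredients: a linear Mahler equation of order $2$ satisfied by $f_{BS}(z)$, verification that the ratio of the leading to the trailing coefficient of that equation is a monomial in $z$, and the fact that the difference Galois group over $\K$ of that equation contains $\SL_{2}(\C)$.

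First, I would recall from the combinatorial definition of the Baum--Sweet sequence $(s_{k})_{k \geq 0}$ the recursions $s_{4k+1}=s_{k}$, $s_{4k+3}=0$, $s_{4k}=s_{k}$, $s_{4k+2}=0$, which, after translating into an identity on the generating series, yield a second order linear Mahler equation of the shape
\[
a_{0}(z) f_{BS}(z) + a_{1}(z) f_{BS}(z^{2}) + a_{2}(z) f_{BS}(z^{4}) = 0,
\]
with $a_{0}(z), a_{2}(z) \in \C[z]$ both nonzero monomials (this is precisely the equation considered for Baum--Sweet in \cite{Ro15}). In particular the monomial hypothesis of Theorem~\ref{theo intro un} on $a_{2}(z)/a_{0}(z)$ holds.

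Next, I would invoke \cite[Section~9]{Ro15}, which is cited in the excerpt as establishing that the difference Galois group over $\K$ of this Mahler equation contains $\SL_{2}(\C)$. This supplies the second hypothesis of Theorem~\ref{theo intro un}. Since $f_{BS}(z) \in \C[[z]]$ is a nonzero solution of the equation (indeed a power series with $0/1$-coefficients whose constant term is $1$), Theorem~\ref{theo intro un} applies with $n=2$ and $p=2$, and directly yields the algebraic independence over $\C(z)$ of $f_{BS}(z)$, $f_{BS}(z^{2})$ and all their successive derivatives. The hypertranscendence of $f_{BS}(z)$ is then the particular case obtained by forgetting $f_{BS}(z^{2})$ and its derivatives.

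The main obstacle is not in this particular deduction, which is essentially a verification of hypotheses, but was already concentrated in the proof of Theorem~\ref{theo intro un} (to be given in the body of the paper) and in the Galois group computation of \cite{Ro15}; here the only thing one has to be careful about is to check that the particular Mahler equation used in \cite{Ro15} is indeed of the monomial form required and that it has $f_{BS}$ as a nonzero solution in $\C((z))$.
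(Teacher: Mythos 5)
Your proposal takes essentially the same route as the paper: the corollary is Theorem~\ref{Galois BS}, which the paper proves by citing \cite[Theorem 50]{Ro15} for the fact that the difference Galois group over $\K$ is $\mu_{4}\SL_{2}(\C)$ (hence contains $\SL_{2}(\C)$) and then applying the user-friendly criterion (Theorem~\ref{user friend theo}, the body-of-paper form of the theorem you invoke) to the order-two Mahler equation $f_{BS}(z^{4})+zf_{BS}(z^{2})-f_{BS}(z)=0$, whose extreme coefficients are indeed monomials. One slip to fix: the recursions you write ($s_{4k+1}=s_{k}$, $s_{4k+3}=0$) are not those of the Baum--Sweet sequence and would instead yield the first-order relation $f(z)=(1+z)f(z^{4})$; the correct ones are $s_{2k+1}=s_{k}$, $s_{4k}=s_{k}$, $s_{4k+2}=0$, which do produce the order-two equation with monomial $a_{0}$ and $a_{2}$ that your argument needs, so the conclusion stands.
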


\begin{corointro}
The series $f_{RS}(z)$, $f_{RS}(-z)$ and all their successive derivatives are algebraically independent over $\C(z)$. In particular, $f_{RS}(z)$ is hypertranscendental over $\C(z)$. 
\end{corointro}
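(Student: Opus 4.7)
The plan is to apply Theorem \ref{theo intro un} to a suitable order-two scalar Mahler equation satisfied by $f_{RS}$, and then convert the resulting algebraic independence of $\{f_{RS}(z),\, f_{RS}(z^{2})\}$ and its derivatives into the desired statement about $\{f_{RS}(z),\, f_{RS}(-z)\}$ \emph{via} a trivial linear change of generators.

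First I would derive the order-two Mahler equation. Splitting $f_{RS}(z)=\sum_{n}a_{n}z^{n}$ according to the parity of $n$ and using the Rudin-Shapiro recursions $a_{2n}=a_{n}$, $a_{2n+1}=(-1)^{n}a_{n}$, one obtains
\[
f_{RS}(z)=f_{RS}(z^{2})+z\,f_{RS}(-z^{2}),
\]
and the substitution $z\mapsto -z$ gives $f_{RS}(-z)=f_{RS}(z^{2})-z\,f_{RS}(-z^{2})$. Adding produces the key identity $f_{RS}(-z)=2f_{RS}(z^{2})-f_{RS}(z)$; applying it with $z$ replaced by $z^{2}$ in order to eliminate $f_{RS}(-z^{2})$ from the first relation yields
\[
2z\, f_{RS}(z^{4})+(1-z)\,f_{RS}(z^{2})-f_{RS}(z)=0,
\]
a scalar equation of the form \eqref{equa generique intro} with $p=n=2$, $a_{0}(z)=-1$, $a_{1}(z)=1-z$, $a_{2}(z)=2z$. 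In particular $a_{2}(z)/a_{0}(z)=-2z$ is a monomial, so the coefficient hypothesis of Theorem \ref{theo intro un} is verified.

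Next I would invoke the Galois-theoretic input already recalled just above: the difference Galois group over $\K$ of this equation contains $\SL_{2}(\C)$ by \cite[Section 9]{Ro15}. Theorem \ref{theo intro un} therefore applies and delivers the algebraic independence over $\C(z)$ of $f_{RS}(z)$, $f_{RS}(z^{2})$ and all their successive derivatives.

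Finally, to pass from $f_{RS}(z^{2})$ to $f_{RS}(-z)$, set $g(z):=f_{RS}(-z)$ and $h(z):=f_{RS}(z^{2})$. Differentiating the identity $g(z)=2h(z)-f_{RS}(z)$ obtained above yields $g^{(k)}(z)=2h^{(k)}(z)-f_{RS}^{(k)}(z)$ for every $k\geq 0$. The $\C(z)$-linear map $(f_{RS}^{(k)}(z),h^{(k)}(z))\mapsto(f_{RS}^{(k)}(z),g^{(k)}(z))$ is invertible at every level $k$, so the two countable families $\{f_{RS}^{(k)}(z),h^{(k)}(z)\}_{k\geq 0}$ and $\{f_{RS}^{(k)}(z),g^{(k)}(z)\}_{k\geq 0}$ generate the same $\C(z)$-subalgebra of $\C((z))$ and share the same algebraic independence properties. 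The algebraic independence produced in the preceding step therefore transfers to the latter family, which is the desired statement. I do not anticipate any genuine obstacle: all the difficulty has been absorbed into Theorem \ref{theo intro un} and into the Galois group computation of \cite{Ro15}, while the derivation of the explicit Mahler equation and the final change of generators are purely formal.
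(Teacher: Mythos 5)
Your proposal is correct, but it takes a slightly different route from the paper. The paper does not pass through the scalar order-two Mahler equation at all: it works directly with the first-order system (\ref{syst RS}) satisfied by the vector $(f_{RS}(z),f_{RS}(-z))^{t}$, checks that $\det A=-\tfrac{1}{2z}$ is a monomial, quotes \cite[Theorem 54]{Ro15} for the Galois group $\GL_{2}(\C)$, and applies the \emph{system} version of the criterion (Theorem \ref{user friend theo syst}), whose conclusion is stated for an arbitrary nonzero solution vector with entries in $\C((z))$ --- so the desired independence of $f_{RS}(z)$, $f_{RS}(-z)$ and their derivatives drops out with no further manipulation. You instead derive the scalar equation $2z\,f_{RS}(z^{4})+(1-z)f_{RS}(z^{2})-f_{RS}(z)=0$, apply the scalar criterion (Theorem \ref{user friend theo}) to get independence of $f_{RS}(z)$, $f_{RS}(z^{2})$ and their derivatives, and then transfer to $f_{RS}(-z)$ via the identity $f_{RS}(-z)=2f_{RS}(z^{2})-f_{RS}(z)$; your transfer argument (two families of the same cardinality generating a field of the right transcendence degree) is sound. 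The two routes are in fact equivalent: your identity is precisely the statement that the companion system of the scalar equation and the system (\ref{syst RS}) are gauge-equivalent by the constant matrix $\left(\begin{smallmatrix}1&0\\-1&2\end{smallmatrix}\right)$, which is also what justifies your citation of \cite[Section 9]{Ro15} for the scalar equation's Galois group (strictly, that reference computes the group of the system (\ref{syst RS}); the constant gauge transformation is the one-line observation that makes the $\SL_{2}(\C)$-containment carry over, and it would be worth stating explicitly). What the paper's route buys is exactly the elimination of your last step: Theorem \ref{user friend theo syst} is formulated for systems precisely so that solution vectors not of companion form, such as $(f_{RS}(z),f_{RS}(-z))^{t}$, can be handled directly.
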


Actually, our methods also allow to study the relations between these series. We prove the following result (see Theorem \ref{BS plus RS}). 

\begin{corointro}
The series $f_{BS}(z),f_{BS}(z^{2}),f_{RS}(z),f_{RS}(-z)$ and all their successive derivatives are algebraically independent over $\C(z)$.
\end{corointro}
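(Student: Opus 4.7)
The plan is to apply the same parametrized Galois theoretic machinery that underlies Theorem~\ref{user friend theo} to the direct sum of the two Mahler systems. Let $\mathcal{M}_{BS}$ and $\mathcal{M}_{RS}$ be the rank-$2$ $\phi$-modules over $\K$ associated to the Mahler equations satisfied by $f_{BS}$ and by $f_{RS}$ (with $\phi(f(z))=f(z^{2})$), whose Picard-Vessiot extensions contain respectively the pairs $f_{BS}(z),f_{BS}(z^{2})$ and $f_{RS}(z),f_{RS}(-z)$. Set $\mathcal{M}:=\mathcal{M}_{BS}\oplus \mathcal{M}_{RS}$, and let $G_{BS},G_{RS},G$ be the difference Galois groups of $\mathcal{M}_{BS},\mathcal{M}_{RS},\mathcal{M}$ over $\K$, and $G^{\delta}$ the parametrized Galois group of $\mathcal{M}$.

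By \cite[Section~9]{Ro15}, each of $G_{BS}$ and $G_{RS}$ contains $\SL_{2}(\C)$. The group $G$ is a closed subgroup of $G_{BS}\times G_{RS}$ that projects surjectively onto each factor, so $H:=G\cap(\SL_{2}(\C)\times \SL_{2}(\C))$ also surjects onto each copy of $\SL_{2}(\C)$. Goursat's lemma together with the almost-simplicity of $\SL_{2}(\C)$ then shows that $H$ is either the whole group $\SL_{2}(\C)\times \SL_{2}(\C)$ or, up to the centers, the graph of an algebraic automorphism $\psi:\SL_{2}(\C)\to\SL_{2}(\C)$.

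Excluding the graph case is the main obstacle: concretely, it amounts to proving that $\mathcal{M}_{BS}$ and $\mathcal{M}_{RS}$ are not projectively equivalent as $\phi$-modules over $\K$. The natural strategy is to compare rank-$1$ invariants: $\det\mathcal{M}_{BS}$ and $\det\mathcal{M}_{RS}$ are explicit rank-$1$ $\phi$-modules given by the monomials $a_{2}(z)/a_{0}(z)$ of each equation, and a projective equivalence would force their ratio to be of the form $\phi(h)/h$ for some $h\in\K^{\times}$, possibly times an explicit root of unity coming from a character of $\SL_{2}$. A direct computation of this ratio from the explicit Baum-Sweet and Rudin-Shapiro coefficients, combined with an analysis of the orbits of their singular points under $z\mapsto z^{2}$, rules out this possibility; hence $G\supseteq \SL_{2}(\C)\times \SL_{2}(\C)$.

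The last step is to transfer this to the parametrized level. The group $G^{\delta}$ is Kolchin-closed and Zariski-dense in $G$, hence in $\SL_{2}(\C)\times \SL_{2}(\C)$. Since $\det\mathcal{M}$ is the product of two monomials, the monomial hypothesis of Theorem~\ref{user friend theo} still holds for the direct sum. A differential algebraic refinement of Goursat's lemma, using Cassidy's classification of Zariski-dense differential algebraic subgroups of simple algebraic groups, together with the projective inequivalence of $\mathcal{M}_{BS}$ and $\mathcal{M}_{RS}$ established above, then forces $G^{\delta}=\SL_{2}(\C)\times \SL_{2}(\C)$ as a differential algebraic group. The standard dictionary of parametrized difference Galois theory finally translates the maximal size of $G^{\delta}$ into the claimed algebraic independence over $\C(z)$ of $f_{BS}(z), f_{BS}(z^{2}), f_{RS}(z), f_{RS}(-z)$ and all their successive derivatives.
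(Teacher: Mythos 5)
Your overall architecture matches the paper's: first show the classical difference Galois group of $\mathcal{M}_{BS}\oplus\mathcal{M}_{RS}$ over $\K$ contains $\SL_2(\C)\times\SL_2(\C)$, then lift to the parametrized level via Cassidy's classification of Zariski-dense differential algebraic subgroups of products of quasi-simple groups (your ``differential refinement of Goursat'' is exactly the paper's Theorem~\ref{GKR differential}), and finally invoke the dimension dictionary. However, there is a genuine gap at the crucial first step. You propose to exclude the ``graph'' case of Goursat by showing that $\mathcal{M}_{BS}$ and $\mathcal{M}_{RS}$ are not projectively equivalent, and you claim a comparison of the rank-one determinant modules does this. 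It does not. One computes $\det A_{BS}=-1$ and $\det A_{RS}=-\tfrac{1}{2z}$, so the relevant ratio is $2z$. A projective equivalence $\mathcal{M}_{RS}\cong\mathcal{M}_{BS}\otimes L$ with $L$ given by $\phi(y)=\ell y$ only forces this ratio to equal $\ell^{2}\,\phi(h)/h$ for some $h\in\K^{\times}$, and over $\K=\cup_{j}\C(z^{1/j})$ every monomial passes this test: taking $\ell=\sqrt{2}$ and $h=z$ gives $\ell^{2}\phi(h)/h=2z^{2}/z=2z$ (indeed $\phi(z^{a})/z^{a}=z^{a}$, so all rational powers of $z$ are coboundaries). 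So the determinant invariant yields no obstruction whatsoever, and the heart of the argument is missing. The paper does not reprove this step; it imports from \cite[Section~9.3]{Ro15} the statement that the difference Galois group of the direct sum over $\K$ is the full product $\mu_{4}\SL_{2}(\C)\times\GL_{2}(\C)$, which is established there by a finer analysis of the two $\phi$-modules than a determinant comparison. To complete your proof you would either have to cite that result or actually rule out the existence of a rank-one twist and gauge transformation intertwining the two systems.

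Two smaller points. First, at the parametrized level the conclusion you need is $G^{\delta}\supseteq\SL_{2}(\widetilde{\C})\times\SL_{2}(\widetilde{\C})$ over the differentially closed field $\widetilde{\C}$, not $\SL_{2}(\C)\times\SL_{2}(\C)$ as written: Cassidy's alternative leaves open the conjugate-to-constant-points case $\SL_{2}(\C)$, which would give hyperalgebraic solutions, and it is excluded factor by factor using the already-computed parametrized groups of the two systems (Theorems~\ref{Galois BS} and~\ref{Galois RS}), which is where the monomial-determinant hypothesis actually enters --- not by applying Theorem~\ref{user friend theo} to the rank-$4$ direct sum, whose classical group certainly does not contain $\SL_{4}(\C)$. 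Second, once $G^{\delta}\supseteq\SL_{2}(\widetilde{\C})\times\SL_{2}(\widetilde{\C})$ is known, the independence of the four series and their derivatives still requires the counting argument of Theorem~\ref{user friend theo syst} (a hyperalgebraic relation among the entries of the two solution columns would drop the $\delta$-transcendence degree below the $\delta$-dimension of the group); this is routine but should be said.
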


We shall now say a few words about the proofs of these results. Our approach relies on the parametrized difference Galois theory developed by Hardouin and Singer in \cite{HS}. Roughly speaking, to the difference equation (\ref{equa generique intro}), they attach a linear {\it differential} algebraic group over a differential closure $\widetilde{\C}$ of $\C$ -- called the parametrized difference Galois group -- which reflects the algebro-differential relations between the solutions of the equation. 
The above theorem is actually a consequence of the following purely Galois theoretic statement (see Section \ref{sec:hyperrank2} for more general results). 

\begin{theointro}
Assume that the difference Galois group over $\K$ of the Mahler equation (\ref{equa generique intro}) contains $\SL_{n}(\C)$ and that $a_{n}(z)/a_{0}(z)$ is a monomial. 
Then, the parametrized difference Galois group of equation (\ref{equa generique intro}) is caught between $\SL_{n}(\widetilde{\C})$ and $\C^{\times} \SL_{n}(\widetilde{\C})$.
\end{theointro}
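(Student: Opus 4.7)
\emph{Plan.} Let $H$ denote the parametrized difference Galois group of (\ref{equa generique intro}) and $G$ its non-parametrized difference Galois group. By the Hardouin-Singer theory recalled in Section \ref{rappels et complements galois differences}, $H$ is a $\partial$-closed subgroup of $\GL_{n}(\widetilde{\C})$ whose Zariski closure is $G(\widetilde{\C})$, so in particular $H$ sits in a group containing $\SL_{n}(\C)$. I will establish the two inclusions $\SL_{n}(\widetilde{\C})\subseteq H\subseteq \C^{\times}\SL_{n}(\widetilde{\C})$ separately, exploiting the hypothesis on $a_{n}/a_{0}$ for the upper bound and a Cassidy-type classification for the lower one.

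\emph{Upper bound.} Let $Y$ be a fundamental matrix of solutions of the companion system associated with (\ref{equa generique intro}). Its determinant $d=\det Y$ satisfies the rank-one Mahler equation $\phi(d)=(-1)^{n-1}\frac{a_{0}}{a_{n}}\,d$. Since $a_{0}/a_{n}$ is a monomial $\alpha z^{\ell}$, one constructs an explicit solution of the form $\beta z^{\ell/(p-1)}$ inside an algebraic extension of $\K$ (after adjoining, if necessary, a $(p-1)$-th root of $\alpha$). In particular $\partial\log d\in\K$. The Hardouin-Singer description of the parametrized Galois group of a rank-one equation then forces $\det(H)$ to be contained in the $\partial$-constants of $\widetilde{\C}$, that is, in $\C^{\times}$. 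Hence
$H\subseteq \{g\in\GL_{n}(\widetilde{\C}):\det g\in\C^{\times}\}=\C^{\times}\SL_{n}(\widetilde{\C})$.

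\emph{Lower bound: reduction via Cassidy.} Put $H^{\mathrm{ss}}:=H\cap\SL_{n}(\widetilde{\C})$. Since $\C^{\times}$ is central in $\C^{\times}\SL_{n}$ and $H\subseteq\C^{\times}\SL_{n}(\widetilde{\C})$ by the preceding step, every commutator of elements of $H$ lies in $\SL_{n}(\widetilde{\C})$, so $[H,H]\subseteq H^{\mathrm{ss}}$. Taking Zariski closures, $\overline{[H,H]}^{\mathrm{Zar}}=[G,G]\supseteq [\SL_{n}(\C),\SL_{n}(\C)]=\SL_{n}(\C)$ because $\SL_{n}$ is perfect for $n\geq 2$; thus $H^{\mathrm{ss}}$ is Zariski-dense in $\SL_{n}(\widetilde{\C})$. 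Cassidy's theorem on Zariski-dense $\partial$-closed subgroups of the simple algebraic group $\SL_{n}$ yields $g\in\GL_{n}(\widetilde{\C})$ such that $gH^{\mathrm{ss}}g^{-1}=\SL_{n}(K_{0})$ for some $\partial$-closed subfield $K_{0}$ of $\widetilde{\C}$ containing $\C$.

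\emph{Main obstacle.} The crux is to show $K_{0}=\widetilde{\C}$, and this is where I expect the work to concentrate. A proper $K_{0}$ would, by the Kolchin-Galois correspondence for the parametrized Picard-Vessiot extension, produce a non-trivial $\partial$-algebraic relation among the entries of $Y$ over $\K$ that does not already come from the determinantal relation controlled above. The strategy is to argue that, on the one hand, the rank-one analysis has exhausted the $\partial$-relations available for $\det Y$ (since $\partial\log d\in\K$), and on the other hand, that the $\SL_{n}$-equivariance forced by $G\supseteq\SL_{n}(\C)$ prevents any such extra relation on the whole of $Y$ from descending to a non-scalar differential constraint; one derives a contradiction with the Zariski density of $H^{\mathrm{ss}}$ in $\SL_{n}$. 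Once this descent is ruled out, $K_{0}=\widetilde{\C}$ and hence $\SL_{n}(\widetilde{\C})\subseteq H$, concluding the proof.
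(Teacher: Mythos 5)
Your overall architecture is sound and matches the paper's: the upper bound comes from the monomial hypothesis on $a_n/a_0$ forcing $\det$ of the parametrized group into $\C^{\times}$ (this is Proposition \ref{propo:mahlerrang1homoge}), and the lower bound reduces, via Zariski density and Cassidy's classification of Zariski-dense $\delta$-closed subgroups of $\SL_n$ (Proposition \ref{propo3}), to excluding the possibility that the group is conjugate to the ``constant'' form $\SL_n(\C)$. But that exclusion is precisely the content of the theorem, and your proposal does not contain an argument for it. Worse, the one concrete mechanism you invoke --- ``one derives a contradiction with the Zariski density of $H^{\mathrm{ss}}$ in $\SL_n$'' --- cannot work: $\SL_n(\C)$ \emph{is} Zariski-dense in $\SL_n(\widetilde{\C})$ (that is the whole point of Cassidy's dichotomy), so conjugacy to $\SL_n(\C)$ is perfectly compatible with Zariski density. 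Similarly, your remark that the rank-one analysis has ``exhausted'' the differential relations on $\det Y$ says nothing about relations among the individual entries of $Y$, which is what the constant case would produce.

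The paper's route through this obstacle uses input you never mention. First, conjugacy of the parametrized group into $\GL_n(\C)$ is equivalent (Propositions \ref{propo:ppvcompintiso} and \ref{propo:projisomonoundgencondition}, plus a descent from $\L$ to $\K'$ to $\K$ as in Lemma \ref{propo:alternative hyperalg}) to the existence of $B\in\K^{n\times n}$ with $p\phi(B)=ABA^{-1}+\vartheta(A)A^{-1}$, i.e.\ an isomonodromy condition; this makes every entry of a fundamental matrix satisfy a linear differential equation over $\cup_j\widetilde{\C}(z^{1/j})$. Second, one produces a nonzero formal solution $u$ of $\phi(Y)=c^{-1}AY$ with entries in $\cup_j\C((z^{1/j}))$ for a suitable $c\in\C^{\times}$ (Lemma \ref{lem sol form}, via triangularization over $\widehat{\K}$). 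Third --- and this is the essential arithmetic ingredient --- B\'ezivin's theorem (\cite[Theorem 1.3]{Be94}, used in Lemma \ref{sol commune}) shows that a formal series satisfying both a Mahler equation and a linear differential equation must lie in $\K$; hence $u$ has entries in $\K$, is fixed by the classical Galois group, and this contradicts the hypothesis that that group contains $\SL_n(\C)$. Without the isomonodromy criterion and the Mahler-versus-differential rationality theorem, the case $K_0=\C$ cannot be ruled out, so the lower bound $\SL_n(\widetilde{\C})\subseteq H$ remains unproved in your proposal.
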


Roughly speaking, the fact that the parametrized difference Galois group of equation (\ref{equa generique intro}) contains $\SL_{n}(\widetilde{\C})$ says that the algebro-differential relations between the elements of a basis $f_{1},\dots,f_{n}$ of solutions (in a suitable sense) of the equation (\ref{equa generique intro}) are generated by the relations satisfied by the determinant of the associated Wronskian matrix $(f_{j}(z^{p^{i-1}}))_{1 \leq i,j \leq n}$. In particular, there is no nontrivial algebro-differential relations between the entries of a given column of this matrix, and this is exactly the conclusion of the first theorem stated in this introduction (with $f_{1}=f$). 

Note that, in order to use the parametrized difference Galois theory developed by Hardouin and Singer, one cannot work with the base field $\K$ 
endowed with the automorphism $\phi$ and the usual derivation $d/dz$ because $\phi$ and $d/dz$ do not commute. In order to solve this problem, 
Michael Singer uses, in an unpublished proof\footnote{Letter from Michael Singer to the second author (February 25, 2010).} 
of the fact that the Mahler function $\sum_{n \geq 0} z^{p^{n}}$ is hypertranscendental, the field $\K(\log(z))$ and the derivation $z \log(z) d/dz$.
We follow this approach in the present paper.  
This idea also appears in Rand\'e's \cite{RandeThese}, but in a slightly different form. Indeed, Rand\'e uses the change of variable $z=\exp(t)$ in order to transform the Mahler difference operator $z \mapsto z^p$ into the  $p$-difference operator $t \mapsto pt$. Pulling back the usual Euler derivation $t d/dt$ to the $z$ variable, we find the derivation $z \log(z) d/dz$. 
Note that Lemma~\ref{lem:extconst} and Proposition \ref{propo:hypertransrank1} are also due to Michael Singer and appear in the above mentioned unpublished manuscript. \\    

This paper is organized as follows. Section \ref{sec1} contains reminders and complements on difference Galois theory. Section \ref{sec:parampv} starts with reminders and complements on parametrized difference Galois theory. Then, we state and prove user-friendly hypertranscendence criteria for general difference equations of order one. We finish this section with complements on (projective) isomonodromy for general difference equations from a Galoisian point of view. In Section \ref{sec hyptr}, we first study the hypertranscendence of the solutions of Mahler equations of order $1$. We then come to higher order equations and give our main hypertranscendence criteria for Mahler equations. 
Section \ref{sec applications} provides user-friendly hypertrancendence criteria and is mainly devoted to applications of our main results to the generating series of classical automatic sequences.  \\

 \noindent {\bf Acknowledgements.}  We would like to thank Michael Singer for discussions and support of this work. We also thank the anonymous referee 
 for interesting suggestions and references. 
\\

\noindent {\bf General conventions.} All rings are commutative with identity and contain the field of rational numbers. In particular, all fields are of characteristic zero.
%%%%%%%%%%%%%%%%%%%%%%%%%%%%%%%%%%%%%%%%%%%%%%%%%%%%%%%%%%%%%%%%%%%%%%%%%%%
\section{Mahler equations and difference Galois theory}\label{sec1}
%%%%%%%%%%%%%%%%%%%%%%%%%%%%%%%%%%%%%%%%%%%%%%%%%%%%%%%%%%%%%%%%%%%%%%%%%%%%%

%%%%%%%%%%%%%%%%%%%%%%%%%%%%%%%%%%%%%%%%%%%%%%%%%%%%%%%%%%%%%%%%%%%%%
%%%%%%%%%%%%%%%%%%%%%%%%%%%%%%%%%%%%%%%%%%%%%%%%%%%%%%%%%%%%%%%%%%%%
\subsection{Difference Galois theory}\label{rappels et complements galois differences}
%%%%%%%%%%%%%%%%%%%%%%%%%%%%%%%%%%%%%%%%%%%%%%%%%%%%%%%%%%%%%%

 For details on what follows, we refer to \cite[Chapter 1]{VdPS97}.

 A $\phi$-ring $(R,\phi)$ is a ring $R$ together with a ring automorphism $\phi : R \rightarrow R$. An ideal of $R$ stabilized by $\phi$ is called a $\phi$-ideal of $(R,\phi)$. If $R$ is a field, then $(R,\phi)$ is called a $\phi$-field. To simplify the notation, we will, most of the time, write $R$ instead of $(R,\phi)$.

The ring of constants of the $\phi$-ring $R$ is defined by 
$$R^{\phi}:=\{f \in R \ | \ \phi(f)=f\}.$$
If $R^{\phi}$ is a field, it is called the field of constants. 

A $\phi$-morphism (resp. $\phi$-isomorphism) from the $\phi$-ring $(R,\phi)$ to the $\widetilde{\phi}$-ring $(\widetilde{R},\widetilde{\phi})$ is a ring morphism (resp. ring isomorphism) $\varphi : R \rightarrow \widetilde{R}$ such that 
$\varphi \circ \phi = \widetilde{\phi} \circ \varphi$.

 Given a $\phi$-ring $(R,\phi)$, a $\widetilde{\phi}$-ring $(\widetilde{R},\widetilde{\phi})$ is a $R$-$\phi$-algebra  
if $\widetilde{R}$ is a ring extension of $R$ and $\widetilde{\phi}_{\vert R}=\phi$;
in this case, we will often denote $\widetilde{\phi}$ by $\phi$. Two $R$-$\phi$-algebras $(\widetilde{R}_{1},\widetilde{\phi}_{1})$ and $(\widetilde{R}_{2},\widetilde{\phi}_{2})$ are isomorphic  if there exists a $\phi$-isomorphism $\varphi$ from $(\widetilde{R}_{1},\widetilde{\phi}_{1})$ to $(\widetilde{R}_{2},\widetilde{\phi}_{2})$ such that 
$\varphi_{\vert R}=\operatorname{Id}_{R}$.\par

We fix a   $\phi$-field $\K$  such that $\k:=\K^\phi$ is algebraically closed.  We consider the following linear difference system 
\begin{equation}\label{eq1}
\phi (Y)=AY, \hbox{ with } A \in \GL_{n}(\K), n\in \N^{*}.
\end{equation}

By \cite[$\S$1.1]{VdPS97}, there exists a $\K$-$\phi$-algebra $R$ such that 
\begin{itemize}
\item[1)] there exists $U \in \GL_{n}(R)$ such that $\phi (U) = AU$ (such a $U$ is called a fundamental matrix of solutions of (\ref{eq1}));
\item[2)] $R$ is generated, as a $\K$-algebra, by the entries of $U$ and $\det(U)^{-1}$;
\item[3)] the only $\phi$-ideals of $R$ are $\{0\}$ and $R$.
\end{itemize}
Such a $R$ is called a Picard-Vessiot ring, or PV ring for short, for (\ref{eq1}) over $\K$. 
By  \cite[Lemma~1.8]{VdPS97}, we have $R^{\phi}=\k$. Two PV rings are isomorphic as $\K$-$\phi$-algebras.
A PV ring $R$ is not always an integral domain. However, there exist idempotents elements $e_{1},\dots,e_{s}$ of $R$ such that $R=R_{1} \oplus \cdots \oplus R_{s}$ where the $R_{i}:=Re_{i}$ are integral domains which are transitively permuted by $\phi$.
In particular, $R$ has no nilpotent element and one can consider 
its total ring of quotients $\cQ_R$, {\it i.e.}, the localization of $R$ with respect to the set of its nonzero divisors, which can be decomposed as the direct sum $\cQ_R=K_{1} \oplus \cdots \oplus K_{s}$ of the fields of fractions $K_{i}$ of the $R_{i}$. The ring $\cQ_R$ has a natural structure of $R$-$\phi$-algebra and we have ${\cQ_R}^{\phi}=\k$. Moreover, the $K_{i}$ are transitively permuted by $\phi$. We call the $\phi$-ring $\cQ_R$ a total PV ring for \eqref{eq1} over $\K$. 

The following lemma gives a characterization of the PV rings.

\begin{lem}[{\cite[Proposition 6.17]{HS}}] \label{lem: caracpvring}
 Let $S$ be a $\K$-$\phi$-algebra with no nilpotent element and let $\cQ_S$ be its total ring of quotients. 
If  the following properties hold:
\begin{enumerate}
\item there exists $V \in \GL_n(S)$ such that $\phi(V)V^{-1}=B \in \GL_n(\K)$ and such that $S$ is generated, as a $\K$-algebra, by the entries of $V$ and by $\det(V)^{-1}$,
\item ${\cQ_S}^ \phi=\k$,
\end{enumerate}
then $S$ is a PV ring for the difference system $\phi(Y)=BY$ over $\K$.
\end{lem}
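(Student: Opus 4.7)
The plan is to verify the three defining properties of a Picard--Vessiot ring for the system $\phi(Y) = BY$ over $\K$: the existence of a fundamental matrix of solutions, generation as a $\K$-algebra by its entries together with the inverse of its determinant, and $\phi$-simplicity. The first two are given directly by item~(1) of the hypothesis, so the heart of the proof consists in showing that the only $\phi$-ideals of $S$ are $(0)$ and $S$.

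I would compare $S$ to a reference PV ring. Let $R$ be a PV ring for $\phi(Y) = BY$ over $\K$ with fundamental matrix $U \in \GL_n(R)$, as produced by the standard construction recalled in Section~\ref{rappels et complements galois differences}. Form the $\K$-$\phi$-algebra $T := R \otimes_\K S$ with the diagonal action $\phi \otimes \phi$. Both $U \otimes 1$ and $1 \otimes V$ are fundamental matrices of the system in $\GL_n(T)$, so the transition matrix
\[
W \;:=\; (U \otimes 1)^{-1}(1 \otimes V)
\]
satisfies $\phi(W) = (BU \otimes 1)^{-1}(1 \otimes BV) = W$; in particular, its entries lie in $T^\phi$.

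The decisive step is to prove that $T^\phi = \k$, which will force $W \in \GL_n(\k)$. By flatness of $\K$, $T$ embeds in $\cQ_R \otimes_\K \cQ_S$, which is a direct sum of reduced $\K$-algebras by separability in characteristic zero, so it suffices to show $(\cQ_R \otimes_\K \cQ_S)^\phi = \k$. This is a standard ``no new constants'' property of tensor products: since $\cQ_R$ is the total ring of quotients of a PV ring and $\cQ_S^\phi = \k$ by hypothesis~(2), tensoring over $\K$ should produce no new $\phi$-invariants when $\k$ is algebraically closed. The usual argument considers a hypothetical constant written as $\sum r_i \otimes s_i$ with a reduced expression involving a minimal number of terms, and exploits $\phi$-invariance together with the Galoisian rigidity of $\cQ_R$ to force the sum to lie in $\k$; this is the step I expect to be the main technical obstacle.

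Once $W \in \GL_n(\k)$, the identities $1 \otimes V = (U \otimes 1)\,W$ and $U \otimes 1 = (1 \otimes V)\,W^{-1}$ show that the $\K$-subalgebras $R \otimes 1$ and $1 \otimes S$ of $T$ coincide. Since $\K$ is a field, the maps $r \mapsto r \otimes 1$ and $s \mapsto 1 \otimes s$ are injective $\K$-$\phi$-algebra morphisms onto these subalgebras, yielding a $\K$-$\phi$-algebra isomorphism $S \cong R$. As $R$ is $\phi$-simple, so is $S$, completing the verification that $S$ is a PV ring for $\phi(Y) = BY$ over $\K$.
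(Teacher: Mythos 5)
The paper gives no proof of this lemma: it is imported wholesale from \cite[Proposition 6.17]{HS}, so your argument has to be judged on its own terms, and it fails at precisely the step you flag as the main obstacle. The claim that $(\cQ_R\otimes_{\K}\cQ_S)^{\phi}=\k$ (equivalently, $T^{\phi}=\k$) is false in general: the ``no new constants'' phenomenon holds for tensor products over the \emph{constant} field $\k$ (that is Lemma~\ref{lem:extconst} of this paper), not for tensor products over $\K$. Quite the opposite: for a PV ring $R$, the ring $(R\otimes_{\K}R)^{\phi}$ is the coordinate ring of the difference Galois group --- the torsor property that drives the whole theory --- and is strictly larger than $\k$ whenever that group is nontrivial. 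Concretely, in the Mahler setting take $\phi(y)=py$ with PV ring $R=\K[\ell,\ell^{-1}]$, $\ell=\log(z)$: the element $(\ell\otimes 1)(1\otimes \ell)^{-1}$ is $\phi$-invariant and transcendental over $\K$. You can also see the collapse downstream: if $W\in\GL_n(\k)$ then $R\otimes 1=1\otimes S$ inside $R\otimes_{\K}S$, which is impossible on transcendence-degree grounds as soon as $R$ has positive transcendence degree over $\K$. Note also that your argument makes no essential use of hypothesis~(2), whereas the statement is false without it: take $n=2$, $B=pI_{2}$ and $V=\operatorname{diag}(\ell_{1},\ell_{2})$ with $\ell_{1},\ell_{2}$ algebraically independent solutions of $\phi(y)=py$; then $\ell_{1}/\ell_{2}-1$ generates a proper nonzero $\phi$-ideal of $S$.

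The standard repair --- the one used for the uniqueness of PV rings in \cite[\S 1.1]{VdPS97} --- is to divide $R\otimes_{\K}\cQ_S$ by a maximal $\phi$-ideal $\mathfrak m$ \emph{before} extracting the transition matrix: the quotient $N$ is $\phi$-simple, so $N^{\phi}=\k$ by (the argument of) \cite[Lemma 1.8]{VdPS97}, and only then does $W$ land in $\GL_n(\k)$ and identify the images of $R$ and $S$ in $N$. The genuine content, and the place where hypothesis~(2) enters, is the injectivity of $\cQ_S\to N$. It follows from two facts: first, $\cQ_S$ is $\phi$-simple, because $S$ is Noetherian so $\cQ_S$ is a finite product of fields, a $\phi$-stable ideal of such a product is generated by a $\phi$-invariant idempotent, and that idempotent lies in $\cQ_S^{\phi}=\k$, hence equals $0$ or $1$; second, since $R$ is $\phi$-simple with constants $\k$, every $\phi$-ideal of $R\otimes_{\K}\cQ_S$ is generated by its intersection with $1\otimes\cQ_S$ (\cite[Lemma 1.11]{VdPS97}), so $\mathfrak m\cap\cQ_S$ is a proper $\phi$-ideal of $\cQ_S$ and therefore zero. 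With $\cQ_S$ and $R$ both embedded in $N$ and their images matched up by $W\in\GL_n(\k)$, $S$ is carried isomorphically onto $R$ and inherits $\phi$-simplicity, as you intended.
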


As a corollary of the above lemma, we find
\begin{lem}\label{lem: carcacsubpvring}
Let $R$ be a PV ring over $\K$ and let $S$ be a $\K$-$\phi$-subalgebra of $R$. If there exists $V \in \GL_n(S)$ such that $\phi(V)V^{-1}=B \in \GL_n(\K)$ and such that $S$ is generated, as a $\K$-algebra, by the entries of $V$ and by $\det(V)^{-1}$ then $S$ is a PV ring for $\phi(Y)=BY$ over $\K$.
\end{lem}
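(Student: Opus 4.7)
The plan is to verify the three hypotheses of Lemma~\ref{lem: caracpvring} for $S$. The existence of $V\in\GL_n(S)$ with $\phi(V)V^{-1}\in\GL_n(\K)$ and the generating statement are given by assumption; and $S$ has no nilpotent elements because it sits inside $R$, which is a finite direct sum of integral domains and is therefore reduced. The essential point to establish is the constants condition $\cQ_S^\phi=\k$.

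My strategy is to extend the inclusion $S\hookrightarrow R$ to a $\phi$-equivariant injection $\cQ_S\hookrightarrow\cQ_R$. Once this is done, the known equality $\cQ_R^\phi=\k$ gives $\cQ_S^\phi\subseteq\k$, and the reverse inclusion is clear since $\k\subseteq\K\subseteq S$. Such an extension exists as soon as every non-zero-divisor of $S$ remains a non-zero-divisor when regarded as an element of $R$, so everything reduces to verifying this lifting of non-zero-divisors.

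To this end, I would use the decomposition $R=R_1\oplus\cdots\oplus R_s$ with associated idempotents $e_1,\dots,e_s$, together with the permutation $\sigma$ on $\{1,\dots,s\}$ defined by $\phi(e_i)=e_{\sigma(i)}$. By the transitivity of the $\phi$-action on the idempotents, $\sigma$ is an $s$-cycle; denoting by $p_i:R\to R_i$ the canonical projections, a short computation gives $p_i\circ\phi=\phi\circ p_{\sigma^{-1}(i)}$. Note also that $\phi$ restricts to an automorphism of $S$ (one checks that $\phi^{-1}(V)=\phi^{-1}(B^{-1})V$ lies in $\GL_n(S)$, so that $\phi^{-1}(S)\subseteq S$ as well). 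Given $x\in S$ a non-zero-divisor of $S$, I will consider the element
\[
\widetilde{x}:=x\,\phi(x)\,\phi^{2}(x)\cdots\phi^{s-1}(x)\in S,
\]
which is a product of non-zero-divisors of $S$ and therefore itself a non-zero-divisor of $S$. Were $x$ a zero-divisor of $R$, we would have $p_{i_0}(x)=0$ for some $i_0$; by transitivity of $\sigma$, for every $i$ there exists $k\in\{0,\dots,s-1\}$ with $\sigma^{k}(i_0)=i$, and the identity above forces $p_i(\phi^k(x))=\phi^k(p_{i_0}(x))=0$. Consequently $p_i(\widetilde{x})=0$ for every $i$, whence $\widetilde{x}=0$ in $R$ and therefore in $S$ — contradicting that $\widetilde{x}$ is a non-zero-divisor of $S$.

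The hard part will be precisely this cyclic-permutation argument, which depends essentially on the finiteness of the decomposition of $R$ and on the transitivity of the $\phi$-action on its idempotents; without either feature, the product $\widetilde{x}$ would not vanish in $R$. The remainder is a direct application of Lemma~\ref{lem: caracpvring}.
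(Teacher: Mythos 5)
Your proof is correct, and it reaches the conclusion by a genuinely more self-contained route than the paper. The paper's own proof is three lines: it notes that $S$ is reduced because $R$ is, invokes \cite[Corollary 6.9]{HS} as a black box to embed $\cQ_S$ into $\cQ_R$, deduces $\cQ_S^{\phi}=\k$ from $\cQ_R^{\phi}=\k$, and applies Lemma~\ref{lem: caracpvring}. You follow the same overall skeleton but replace the citation by a direct verification of the one fact it is needed for, namely that every non-zero-divisor of $S$ remains a non-zero-divisor of $R$ (so that $\cQ_S\hookrightarrow\cQ_R$). Your argument for this --- using that a zero-divisor of $R=R_1\oplus\cdots\oplus R_s$ must have a vanishing component, that $\phi$ cyclically permutes the components, and that $\widetilde{x}=x\,\phi(x)\cdots\phi^{s-1}(x)$ would then vanish while being a product of non-zero-divisors of $S$ --- is sound; the auxiliary observation that $\phi^{-1}(V)=\phi^{-1}(B^{-1})V\in\GL_n(S)$, hence $\phi$ restricts to an automorphism of $S$, is exactly what is needed to know that each $\phi^k(x)$ is again a non-zero-divisor of $S$. (One could even avoid that step by peeling factors off $\widetilde{x}=0$ one at a time and applying $\phi^{-1}$, but your version is fine.) What the paper's approach buys is brevity and uniformity with the rest of the Hardouin--Singer machinery; what yours buys is independence from \cite{HS} and an explicit view of where the transitivity of the $\phi$-action on the idempotents is used.
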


\begin{proof}
Since $R$ has no nilpotent element, $S$ has no nilpotent element. By \cite[Corollary 6.9]{HS}, the total ring of quotients $\cQ_S$ of $S$ can be embedded into the total ring of quotients $\cQ_R$ of $R$.
Since ${\cQ_R}^\phi=\k$, we have ${\cQ_S}^\phi=\k$. Lemma \ref{lem: caracpvring} yields the desired result.
\end{proof}

The  difference Galois group $\Gal(\cQ_R/\K)$ of $R$ over $\K$ is the group of 
$\K$-$\phi$-automorphisms of $\cQ_R$ commuting 
with $\phi$:
$$
\Gal(\cQ_R/\K) :=\{ \sigma \in \mathrm{Aut}(\cQ_R/\K) \ | \ \phi\circ\sigma=\sigma\circ \phi \}.
$$
Abusing notation, we shall sometimes let $\Gal(\cQ_R/F)$ denote the group ${\{\sigma \in \mathrm{Aut}(\cQ_R/F) \ | \ \phi\circ\sigma=\sigma\circ \phi \}} $  for $F$ a $\K$-$\phi$-subalgebra of $\cQ_R$.

An easy computation shows that, for any $\sigma \in \Gal(\cQ_R/\K) $, there exists a unique ${C(\sigma) \in \GL_{n}(\k)}$ such that $\sigma(U)=UC(\sigma)$.
By  \cite[Theorem~1.13]{VdPS97}, the faithful representation
\begin{eqnarray*}
 \Gal(\cQ_R/\K) & \rightarrow & \GL_{n}(\k) \\ 
 \sigma & \mapsto & C(\sigma)
\end{eqnarray*}
identifies  $\Gal(\cQ_R/\K) $ with a linear algebraic subgroup of $\GL_{n}(\k)$.
If we choose another fundamental matrix of solutions $U$, we find a conjugate representation. 

A fundamental theorem
of difference Galois theory (\cite[Theorem 1.13]{VdPS97}) says that $R$ is the coordinate ring of a $G$-torsor over $\K$. In particular, the dimension of $ \Gal(\cQ_R/\K)$ as a linear algebraic group
over $\k$ coincides with the transcendence degree of the $K_{i}$ over $\K$. Thereby, the difference Galois group controls the algebraic
relations satisfied by the solutions.

The following proposition gives a characterization of the normal algebraic subgroups of $\Gal(\cQ_R/\K)$.

\begin{propo}\label{propo:normalsubgroup}
An algebraic  subgroup $H$ of $ \Gal(\cQ_R/\K)$ is normal if and only if the $\phi$-ring ${{\cQ_R}^{H}:=\{g\in \cQ_R \ |Ê\ \forall \s \in H, \s(g)=g\}}$ is stable under the action of $\Gal(\cQ_R/\K)$. 
In this case, the $\K$-$\phi$-algebra ${\cQ_R}^{H}$ is a total PV ring over $\K$  and the following sequence of group morphisms is exact
$$
\xymatrix{
 0   \ar[r] &  H  \ar[r]^-{\iota} &  \Gal(\cQ_R/\K)   \ar[r]^-\pi &  \Gal({\cQ_R}^{H}/\K)  \ar[r] & 0,
}
$$
where $\iota$ is the inclusion of  $H$  in $ \Gal(\cQ_R/\K) $ and $\pi$ denotes the restriction
of the elements of $\Gal(\cQ_R/\K)$ to   ${\cQ_R}^{H}$. 
\end{propo}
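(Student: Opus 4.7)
The plan is to follow the standard Galois-correspondence strategy in three steps. The basic tool I will use repeatedly is the correspondence $H = \Gal(\cQ_R/{\cQ_R}^H)$ valid for every algebraic subgroup $H$ of $G := \Gal(\cQ_R/\K)$, which is part of the fundamental theorem of difference Galois theory recalled above.

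\emph{Step 1 (Equivalence).} The equivalence reduces to a short conjugation calculation. If $H$ is normal, then for every $\sigma \in G$, $h \in H$ and $g \in {\cQ_R}^H$, I compute $h(\sigma(g)) = \sigma((\sigma^{-1} h \sigma)(g)) = \sigma(g)$, since $\sigma^{-1} h \sigma \in H$. Thus ${\cQ_R}^H$ is $G$-stable. Conversely, if ${\cQ_R}^H$ is $G$-stable, then for every $\sigma \in G$ the subgroup $\sigma H \sigma^{-1}$ acts trivially on ${\cQ_R}^H$, so it is contained in $\Gal(\cQ_R/{\cQ_R}^H) = H$, proving that $H$ is normal.

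\emph{Step 2 (Total PV structure of ${\cQ_R}^H$).} This is the technical heart. Since $H$ is normal, the quotient $G/H$ is a linear algebraic group over $\k$, so it admits a faithful rational representation $\rho : G/H \hookrightarrow \GL_m(\k)$. I pull $\rho$ back to a representation of $G$ whose kernel is exactly $H$, and realize it geometrically inside $R$ through the $G$-torsor structure of $\mathrm{Spec}(R) \to \mathrm{Spec}(\K)$: its matrix coefficients lie in ${\cQ_R}^H$ and assemble into a matrix $V \in \GL_m({\cQ_R}^H)$ with $\phi(V) V^{-1} \in \GL_m(\K)$. Applying Lemma \ref{lem: carcacsubpvring} to the $\K$-subalgebra $S \subseteq {\cQ_R}^H$ generated by the entries of $V$ and $\det(V)^{-1}$ shows that $S$ is a PV ring over $\K$. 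The identification $\cQ_S = {\cQ_R}^H$ then follows by comparing transcendence degrees: both are equal to $\dim(G/H)$, the first by PV theory applied to $S$ and the second by the $G$-torsor structure of $\mathrm{Spec}(R)$ together with classical invariant theory. Hence ${\cQ_R}^H$ is a total PV ring over $\K$.

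\emph{Step 3 (Exact sequence).} Granted Step 2, the restriction map $\pi$ from $G$ to $\Gal({\cQ_R}^H/\K)$ is well-defined thanks to the $G$-stability of ${\cQ_R}^H$. Its kernel is $\Gal(\cQ_R/{\cQ_R}^H) = H$ by Step 1 and the fundamental correspondence. For surjectivity, I extend any $\tau \in \Gal({\cQ_R}^H/\K)$ to $\cQ_R$: the ring $\cQ_R$ is a total PV ring for $\phi(Y) = AY$ over ${\cQ_R}^H$, and so is the twist of $\cQ_R$ by $\tau$, hence the uniqueness up to $\phi$-isomorphism of total PV rings provides an extension of $\tau$ to a $\phi$-automorphism of $\cQ_R$. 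The main obstacle is clearly Step 2: producing the matrix $V$ from a faithful representation of $G/H$ requires careful use of the torsor structure of $\mathrm{Spec}(R)$, and the identification of $\cQ_S$ with ${\cQ_R}^H$ (rather than with a proper subring) rests on the transcendence-degree equality, which is the subtle geometric input.
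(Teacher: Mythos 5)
Your Step 1 is essentially the paper's own argument: the same conjugation computation for the direct implication, and for the converse the paper identifies $H$ with the kernel of the restriction morphism $\pi$ via the Galois correspondence, which is the same content as your containment $\sigma H\sigma^{-1}\subseteq\Gal(\cQ_R/{\cQ_R}^{H})=H$. For the second half of the statement the paper does not argue at all: it simply cites Corollary 1.30 of van der Put and Singer's book. You instead try to reprove that corollary, which is legitimate; your Step 3 (kernel computed by the correspondence, surjectivity by uniqueness of total PV rings over ${\cQ_R}^{H}$) is the standard argument, and your use of Lemma \ref{lem: carcacsubpvring} to make the subalgebra generated by $V$ into a PV ring is the right move.

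There is, however, a genuine gap in Step 2: you deduce $\cQ_S={\cQ_R}^{H}$ from the equality of transcendence degrees over $\K$. That inference is invalid. Two nested sums of fields with the same finite transcendence degree over $\K$ can differ by a nontrivial algebraic extension, and in the case where $G/H$ is finite --- which actually occurs in this paper, e.g.\ $H=\SL_2(\widetilde{\C})$ inside $\mu_4\SL_2(\widetilde{\C})$, where $G/H\cong\mu_2$ --- both transcendence degrees are $0$ and the argument says nothing. The identification must instead go back through the correspondence: the entries of $V$ are $H$-invariant, so $H\subseteq\Gal(\cQ_R/\cQ_S)$; conversely, any $\sigma$ fixing $\cQ_S$ pointwise fixes $V$, hence is sent to the identity by the faithful representation $\rho$ of $G/H$, hence lies in $H$. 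Thus $\Gal(\cQ_R/\cQ_S)=H$, and the first part of the Galois correspondence (namely $\cQ_R^{\Gal(\cQ_R/F)}=F$) yields $\cQ_S={\cQ_R}^{H}$. A secondary weakness is that the existence of $V$ itself --- a matrix in $\GL_m({\cQ_R}^{H})$ with $\phi(V)V^{-1}\in\GL_m(\K)$ realizing the pulled-back representation of $G$ --- is only asserted; it requires the Tannakian fact that every representation of $G$ arises as the solution space of a construction of linear algebra applied to the original difference module, not merely the torsor structure of $\operatorname{Spec}(R)$.
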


\begin{proof}
Assume that $H$ is normal in $\Gal(\cQ_R/\K)$. For all  $\tau \in \Gal(\cQ_R/\K)$, $g \in {\cQ_R}^{H}$, and $\sigma \in H$, we have
$$
\sigma(\tau(g))=\tau (( \tau^{-1} \sigma \tau)(g))=\tau(g).$$
This shows that ${\cQ_R}^{H}$ is stable under the action of $\Gal(\cQ_R/\K)$.
Conversely, assume that ${\cQ_R}^{H}$ is stable under the action of $\Gal(\cQ_R/\K)$. 
 Then, we can consider the restriction morphism 
\begin{eqnarray*}
\pi: \Gal(\cQ_R/\K) &\rightarrow &\Gal({\cQ_R}^{H}/\K) \\ 
\sigma &\mapsto &\sigma|_{{\cQ_R}^{H}}.
\end{eqnarray*} 
By Galois correspondence (see \cite[Theorem 6.20]{HS}), we have $\ker(\pi)=H$ and, hence, $H$ is normal in $\Gal(\cQ_R/\K)$. The rest of the proof is \cite[Corollary 1.30]{VdPS97}.
 \end{proof}

\begin{coro}\label{coro1}
 Let $f$ be an invertible element of $R$ such that $\phi (f)=af$ for some $a\in \K$. Let $\cQ_f \subset \cQ_R$ be the
 total ring of quotients of $\K[f,f^{-1}]$; this is a total PV ring for $\phi (y)=ay$ over $\K$. 
   Then, $\Gal(\cQ_R/\cQ_f)$ is a solvable algebraic group if and only if $\Gal(\cQ_R/\K) $ is a solvable algebraic group.
\end{coro}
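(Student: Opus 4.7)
The plan is to realize $\cQ_f$ as a total PV ring whose Galois group is a quotient of $\Gal(\cQ_R/\K)$ by $\Gal(\cQ_R/\cQ_f)$, so that solvability transfers through the resulting short exact sequence once one notices that the outer quotient is automatically abelian.

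First, I would verify that $\cQ_f$ is a total PV ring for $\phi(y)=ay$ over $\K$. The element $f\in R$ is a unit and satisfies $\phi(f)f^{-1}=a\in \GL_1(\K)$, and $\K[f,f^{-1}]$ is generated as a $\K$-algebra by $f$ and $f^{-1}$. Hence Lemma~\ref{lem: carcacsubpvring}, applied to the $\K$-$\phi$-subalgebra $S=\K[f,f^{-1}]\subset R$ with $V=(f)\in\GL_1(S)$, shows that $S$ is a PV ring for $\phi(y)=ay$ over $\K$, so $\cQ_f$ is its total PV ring.

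Next, I would show that $\cQ_f$ is stable under the action of $\Gal(\cQ_R/\K)$. For any $\sigma\in \Gal(\cQ_R/\K)$, the element $\sigma(f)/f\in \cQ_R$ satisfies
\[
\phi\!\left(\frac{\sigma(f)}{f}\right)=\frac{\sigma(\phi(f))}{\phi(f)}=\frac{\sigma(af)}{af}=\frac{\sigma(f)}{f},
\]
so $\sigma(f)/f\in {\cQ_R}^{\phi}=\k$. Hence $\sigma(f)=c_\sigma f$ with $c_\sigma\in \k^{\times}$, and $\sigma$ preserves $\K[f,f^{-1}]$, hence its total ring of quotients $\cQ_f$. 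By Proposition~\ref{propo:normalsubgroup} (together with the Galois correspondence ${\cQ_R}^{\Gal(\cQ_R/\cQ_f)}=\cQ_f$), the subgroup $H:=\Gal(\cQ_R/\cQ_f)$ is normal in $G:=\Gal(\cQ_R/\K)$, and we obtain an exact sequence
\[
0 \longrightarrow \Gal(\cQ_R/\cQ_f) \longrightarrow \Gal(\cQ_R/\K) \longrightarrow \Gal(\cQ_f/\K) \longrightarrow 0.
\]

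Finally, I would observe that $\Gal(\cQ_f/\K)$ is, via its natural faithful representation on the one-dimensional solution space of $\phi(y)=ay$, an algebraic subgroup of $\GL_1(\k)=\k^{\times}$; in particular it is abelian, hence solvable. By the standard fact that in a short exact sequence of groups the middle term is solvable if and only if the kernel and the quotient are both solvable, and since the quotient is automatic, we conclude that $\Gal(\cQ_R/\K)$ is solvable if and only if $\Gal(\cQ_R/\cQ_f)$ is solvable.

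There is no real obstacle: the only point requiring a small computation is the stability of $\cQ_f$ under $G$, which boils down to the constancy of $\sigma(f)/f$. Once normality is in place, the rest is the formal behaviour of solvability in exact sequences combined with the triviality of the rank-one quotient.
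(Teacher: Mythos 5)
Your proposal is correct and follows essentially the same route as the paper's proof: show $\cQ_f$ is stable under $\Gal(\cQ_R/\K)$ because $\sigma(f)f^{-1}\in{\cQ_R}^{\phi}=\k$, invoke Proposition~\ref{propo:normalsubgroup} together with the Galois correspondence to get the exact sequence, and conclude from the fact that $\Gal(\cQ_f/\K)\subset\GL_1(\k)$ is abelian. The only difference is that you spell out the verification that $\K[f,f^{-1}]$ is a PV ring via Lemma~\ref{lem: carcacsubpvring} and the constancy computation for $\sigma(f)/f$, both of which the paper leaves implicit.
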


\begin{proof}
We have ${\cQ_R}^{\Gal(\cQ_R/\cQ_f)}=\cQ_f$ (in virtue of the Galois correspondence \cite[Theorem 1.29.3]{VdPS97}) and $\cQ_f$ is stable under the action of $\Gal(\cQ_R/\K)$ (because, for all $\sigma \in \Gal(\cQ_R/\K)$, we have $\sigma(f)f^{-1} \in {\cQ_R}^\phi=\k$). 
By Proposition \ref{propo:normalsubgroup}, $\Gal(\cQ_R/\cQ_f)$ is normal in $\Gal(\cQ_R/\K)$ and the sequence 
$$
\xymatrix{
 0   \ar[r] & \Gal(\cQ_R/\cQ_f)  \ar[r]^\iota &  \Gal(\cQ_R/\K)   \ar[r]^\pi &  \Gal(\cQ_f/\K)  \ar[r] & 0
}
$$
is exact. Since $\Gal(\cQ_f/\K) \subset \GL_1(\k)$ is abelian, the group $\Gal(\cQ_R/\K) $ is solvable
if and only if the same holds for $\Gal(\cQ_R/\cQ_f)$.
\end{proof}

%%%%%%%%%%%%%%%%%%%%%%%%%%%%%%%%%%%%%%%%%%%%%%%%%%%%%%%%%%%%%%%%%%%%%
%%%%%%%%%%%%%%%%%%%%%%%%%%%%%%%%%%%%%%%%%%%%%%%%%%%%%%%%%%%%%%%%%%
\subsection{More specific results about Mahler equations}\label{mahler as diff eq}
%%%%%%%%%%%%%%%%%%%%%%%%%%%%%%%%%%%%%%%%%%%

Now, we restrict ourselves to the Mahlerian context. 

We let $p\geq 2$ be an integer. 

We consider the field 
$$
\K:=\cup_{j \geq 1} \C\left(z^{1/j}\right).
$$  
The field automorphism 
\begin{eqnarray*} 
\phi : \K & \rightarrow & \K \\ 
f(z) & \mapsto & f(z^{p})
\end{eqnarray*} 
gives a structure of $\phi$-field on $\K$ such that $\K^\phi=\C$.

We also consider the field $\K':=\K(\log(z))$.  The field automorphism 
\begin{eqnarray*} 
\phi : \K' & \rightarrow & \K' \\ 
f(z,\log(z)) & \mapsto & f(z^{p},p \log(z))
\end{eqnarray*} 
gives a structure of $\phi$-field on $\K'$ such that $\K'^\phi=\C$.

In the sequel, we shall consider Mahler equations above the $\phi$-field $\K$ and also above its $\phi$-field extension $\K'$. We shall now study the effect of the base extension from $\K$ to $\K'$ on the difference Galois groups.

We first state and prove a lemma.

\begin{lem}\label{lem log(z)}
Let $L$ be a $\phi$-subfield of $\K'$ that contains $\K$. Then, there exists an integer $k \geq 0$ such that $L=\K(\log(z)^{k})$.
\end{lem}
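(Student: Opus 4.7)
The plan is to proceed as follows. If $L = \K$ we take $k = 0$. Otherwise, since $\K' = \K(\log(z))$ is purely transcendental of degree one over $\K$, the subfield $\K$ is algebraically closed in $\K'$, so $L$ has transcendence degree $1$ over $\K$; Lüroth's theorem then provides some $u \in L \setminus \K$ with $L = \K(u)$, which I write as $u = P(\log(z))/Q(\log(z))$ with coprime $P, Q \in \K[X]$.

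Next I exploit the $\phi$-stability of $L$: since $\phi(u) \in L = \K(u)$, there exist $a, b, c, d \in \K$ with $ad - bc \neq 0$ such that $\phi(u) = (au+b)/(cu+d)$. Matching the two coprime representatives yields a scalar $\lambda \in \K^{\times}$ together with the system $\phi(P) = \lambda(aP + bQ)$, $\phi(Q) = \lambda(cP + dQ)$, where $\phi(P)$ stands for $(\phi_{\K} P)(p\log(z))$ and likewise for $Q$. Consequently the $2$-dimensional $\K$-subspace $V := \K P + \K Q \subset \K[\log(z)]$ is $\phi$-stable.

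The crux is to prove that every finite-dimensional $\phi$-stable $\K$-subspace $W \subseteq \K[\log(z)]$ is homogeneous for the $\log(z)$-degree, i.e. of the form $\bigoplus_{i} \K \cdot \log(z)^{n_{i}}$ for distinct $n_{i} \geq 0$. This rests on the following rigidity lemma: for $f \in \K^{\times}$ and $c \in \C^{\times}$, the relation $\phi(f) = cf$ forces $c = 1$ and $f \in \C^{\times}$. I would prove it by fixing $j$ so that $f \in \C(w)$ with $w = z^{1/j}$ and $\phi(w) = w^{p}$, and then comparing the divisors of $f(w^{p})$ and $cf(w)$ on $\mathbb{P}^{1}_{\C}$: closure of the zero/pole set under $w \mapsto w^{p}$ and its $p$-th root preimages forces it to be empty, whence $f$ is constant and $c = 1$. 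Granting the lemma, starting from any $v \in W$ of maximal $\log(z)$-degree $n_{1}$, I iteratively form $\K$-linear combinations of $v$ with $\phi(v), \phi^{2}(v), \dots$ to kill the lower-order coefficients one at a time; at each step the relevant $2 \times 2$ system has nonzero determinant because a vanishing determinant would produce some $f \in \K^{\times}$ with $\phi(f)/f = p^{n_{1} - j} \neq 1$, contradicting the lemma. After finitely many steps the monomial $\log(z)^{n_{1}}$ lies in $W$, and an induction on $\dim_{\K} W$ applied to the $\phi$-stable subspace $W \cap \K[\log(z)]_{< n_{1}}$ yields the homogeneous decomposition.

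Applying this to $V$ gives $V = \K \cdot \log(z)^{a} \oplus \K \cdot \log(z)^{b}$ with $0 \leq a < b$. Since $P$ and $Q$ are $\K$-linear combinations of $\log(z)^{a}$ and $\log(z)^{b}$, their coprimality forces $a = 0$ (otherwise $\log(z)^{a}$ would be a common factor); hence $u$ is a Möbius transformation of $\log(z)^{b}$ with $\K$-coefficients, and $L = \K(u) = \K(\log(z)^{b})$, so $k = b$ works. The main obstacle is the homogeneity step: it is precisely where the arithmetic rigidity of the $\phi$-action on $\K$ enters in an essential way.
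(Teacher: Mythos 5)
Your proof is correct, but it follows a genuinely different route from the paper's. The paper's argument is Galois-theoretic and very short: it observes that $\K'$ is a total Picard--Vessiot ring over $\K$ for $\phi(y)=py$, that $\Gal(\K'/\K)=\C^{\times}$ because $\log(z)$ is transcendental over $\K$, that the proper algebraic subgroups of $\C^{\times}$ are the groups $\mu_k$ of $k$-th roots of unity, and then reads off $L=\K(\log(z)^k)$ from the Galois correspondence. You instead avoid the Picard--Vessiot machinery entirely: L\"uroth's theorem gives $L=\K(u)$, the $\phi$-stability of $L$ produces a two-dimensional $\phi$-stable subspace of $\K[\log(z)]$, and your homogeneity claim for such subspaces (which I checked can be completed as sketched, e.g.\ by induction on the dimension using the top-degree coefficient map) reduces everything to the rigidity statement that $\phi(f)=cf$ with $f\in\K^{\times}$, $c\in\C^{\times}$ forces $f\in\C^{\times}$ and $c=1$; your divisor-counting proof of that statement on $\mathbb{P}^1$ is sound (the support of $\operatorname{div}(f)$ away from $0,\infty$ would have to satisfy $pN=N$, and the orders at $0$ and $\infty$ must vanish). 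Note that this rigidity lemma is precisely the concrete arithmetic input that, in the paper's approach, is packaged into the facts that $\K^{\phi}=\C$ and that $\Gal(\K'/\K)$ is all of $\C^{\times}$. What your approach buys is self-containedness and independence from the PV formalism (in particular from the Galois correspondence for total PV rings); what the paper's buys is brevity and reuse of machinery it has already set up and will need elsewhere. One small presentational caveat: the Möbius relation $\phi(u)=(au+b)/(cu+d)$ tacitly uses $\K(\phi(u))=\K(u)$, which follows since $\phi$ is an automorphism of $\K'$ preserving $\K$, so $[\K':\K(\phi(u))]=[\K':\K(u)]$ and the inclusion $\K(\phi(u))\subseteq\K(u)$ must be an equality; it is worth saying this explicitly.
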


\begin{proof}
The case $L=\K$ is obvious (take $k=0$). We shall now assume that $L\neq \K$.  Lemma \ref{lem: caracpvring} ensures that $\K'$ is a total PV ring  over $\K$ for the equation
$
{\phi(y)=py}
$.  The action of $\Gal(\K'/\K)$ on $\log(z)$ allows to see  $\Gal(\K'/\K)$ as an algebraic subgroup of  $\C^{\times}$. Since  $\log(z)$ is transcendental over $\K$, we have $\Gal(\K'/\K)=\C^{\times}$.
Since $L \neq \K$, the group $\Gal(\K'/L)$ is a proper algebraic subgroup of $\C^{\times}$ and, hence, is a group of roots of unity.
Then there exists an integer $k \geq 1$ such that $\Gal(\K'/L) = \mu_k :=\{ c \in \C^{\times} \ |Ê\ c^k =1\}$.  So, $\log(z)^k$
is fixed by $\Gal(\K'/L)$ and, hence, belongs to $L$ by Galois correspondence. Since $\Gal(\K'/\K(\log(z)^k)) \subset \mu_k$, we get that 
$L=\K( \log(z)^k)$.
\end{proof}

We consider the difference system 
\begin{equation}\label{l equa}
\phi (Y)
=
A
Y
\end{equation}
with $A \in \GL_{n}(\C(z))$. 
Let $R'$ be a PV ring for (\ref{l equa}) over 
$\K'$;
then $\mathcal{Q}_{R'}$ is a total PV ring for (\ref{l equa}) over $\K'$. Let $U \in \GL_{n}(R')$ be a fundamental matrix of solutions of (\ref{l equa}). Let $R$ be the $\K$-subalgebra of $R'$ generated by the entries of $U$ and $\det(U)^{-1}$. By \cite[Corollary 6.9]{HS}, we have $\cQ_R \subset \cQ_{R'}$. Since ${\cQ_{R'}}^\phi= \K'^\phi =\C$, we have ${\cQ_R}^\phi=\C$ and Lemma \ref{lem: caracpvring} allows to conclude that $R$ is a PV  ring for (\ref{l equa}) over $\K$ and $\mathcal{Q}_{R}$ is a total PV ring for (\ref{l equa}) over $\K$. 

The restriction morphism 
$$
\iota : \Gal(\cQ_{R'}/\K') \rightarrow \Gal(\cQ_{R}/\K)
$$ 
is a closed immersion; we will freely identify $\Gal(\cQ_{R'}/\K')$ with the subgroup $\iota(\Gal(\cQ_{R'}/\K'))$ of $\Gal(\cQ_{R}/\K)$.

\begin{propo}
The difference Galois group $\Gal(\cQ_{R'}/\K')$ is a normal subgroup of $\Gal(\cQ_{R}/\K)$ and the quotient $\Gal(\cQ_{R}/\K)/\Gal(\cQ_{R'}/\K')$ is either trivial or isomorphic to $\C^{\times}$.
\end{propo}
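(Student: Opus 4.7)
The plan is to apply Proposition~\ref{propo:normalsubgroup} to the algebraic subgroup $H := \iota(\Gal(\cQ_{R'}/\K'))$ of $\Gal(\cQ_{R}/\K)$. The natural candidate for its fixed ring is the $\phi$-field $F := \cQ_R \cap \K'$. First I would verify that $\cQ_R^{H} = F$: the inclusion $F \subset \cQ_R^{H}$ is clear, and for the reverse inclusion, any $f \in \cQ_R^{H} \subset \cQ_{R'}$ is fixed by every element of $\Gal(\cQ_{R'}/\K')$, so by the Galois correspondence applied to $\cQ_{R'}/\K'$ one has $f \in \K'$, and hence $f \in F$.

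By Lemma~\ref{lem log(z)}, the intermediate $\phi$-field $F$ has the form $\K(\log(z)^k)$ for some integer $k \geq 0$. The next step is to show that $F$ is stable under the full group $\Gal(\cQ_R/\K)$. The key observation is that $\log(z)^k$ lies in $\cQ_R$ and satisfies the hyperexponential-type equation $\phi(\log(z)^k) = p^{k}\log(z)^{k}$; moreover, since it is invertible in the larger total ring $\cQ_{R'}$, it is a non-zerodivisor in $\cQ_R$, hence invertible in $\cQ_R$ (which is a finite direct product of fields). Consequently, for any $\tau \in \Gal(\cQ_R/\K)$, the ratio $\tau(\log(z)^k)/\log(z)^k$ lies in $\cQ_R^{\phi}=\C$, so that $\tau(\log(z)^k) \in \C\cdot\log(z)^{k} \subset F$, which proves the stability.

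Proposition~\ref{propo:normalsubgroup} then yields at once both the normality of $H$ in $\Gal(\cQ_R/\K)$ and the exact sequence
\[
0 \to H \to \Gal(\cQ_R/\K) \to \Gal(F/\K) \to 0.
\]
To finish, I would identify $\Gal(F/\K)$. If $k=0$, then $F=\K$ and the quotient is trivial. If $k \geq 1$, then $\log(z)^k$ is transcendental over $\K$, so by Lemma~\ref{lem: carcacsubpvring} the subring $\K[\log(z)^k,\log(z)^{-k}]$ is a PV ring over $\K$ for the rank one equation $\phi(y)=p^{k}y$, with total ring of quotients $F$; its Galois group embeds into $\GL_{1}(\C)=\C^{\times}$ with dimension equal to the transcendence degree of $F/\K$, which is one, forcing it to equal $\C^{\times}$.

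The one point that requires care is the identification of $\cQ_R^{H}$ with $\cQ_R \cap \K'$, which relies on using the Galois correspondence inside the larger extension $\cQ_{R'}/\K'$; once that is in place, everything else falls out of Lemma~\ref{lem log(z)} and the general normality criterion.
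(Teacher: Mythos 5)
Your proposal is correct and follows essentially the same route as the paper: both identify the fixed ring of $\iota(\Gal(\cQ_{R'}/\K'))$ with $\K'\cap\cQ_R$ via the Galois correspondence in $\cQ_{R'}/\K'$, invoke Lemma~\ref{lem log(z)} to write it as $\K(\log(z)^k)$, and then apply Proposition~\ref{propo:normalsubgroup} to get normality together with the exact sequence, identifying the quotient with the Galois group of $\phi(y)=p^k y$ over $\K$. Your explicit verification of the stability of $F$ under $\Gal(\cQ_R/\K)$ (via $\tau(\log(z)^k)/\log(z)^k\in\cQ_R^{\phi}=\C$) and of the invertibility of $\log(z)^k$ in $\cQ_R$ just spells out steps the paper leaves implicit.
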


\begin{proof}
We set $G':=\iota(\Gal(\cQ_{R'}/\K'))$ and $G:=\Gal(\cQ_{R}/\K)$. 
Let us consider ${F:=(\mathcal{Q}_{R})^{G'}=(\mathcal{Q}_{R'})^{G'}\cap \mathcal{Q}_{R}=\K' \cap \mathcal{Q}_{R}}$. The Galois correspondence \cite[Theorem 6.20]{HS} ensures that  $G'=\Gal(\cQ_{R}/F)$. Since $F/\K$ is a $\phi$-subfield extension of $\K'/\K$, Lemma \ref{lem log(z)} ensures that there exists an integer $k \geq 0$ such that $F=\K(\log(z)^{k})$. Since $F^\phi=\C$, Lemma \ref{lem: caracpvring} shows that $F$ is a total PV ring over $\K$ for $\phi (y) = p^{k} y$. Using Proposition \ref{propo:normalsubgroup}, we see that $G'$ is a normal subgroup of $G$ and that $G/G'$ is isomorphic to the difference Galois group over $\K$ of $\phi (y) = p^{k} y$, which is  trivial if $k=0$  and equal to $\C^{\times}$ otherwise. 
\end{proof}

\begin{coro}\label{coro contient SL}
If $\SL_{n}(\C) \subset \Gal(\cQ_{R}/\K)$ then $\SL_{n}(\C) \subset \Gal(\cQ_{R'}/\K')$.
\end{coro}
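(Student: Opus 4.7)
The plan is to use the exact sequence
$$ 0 \longrightarrow \Gal(\cQ_{R'}/\K') \longrightarrow \Gal(\cQ_{R}/\K) \longrightarrow \Gal(\cQ_{R}/\K)/\Gal(\cQ_{R'}/\K') \longrightarrow 0 $$
provided by the preceding proposition, together with the rigidity of $\SL_{n}(\C)$ as an algebraic group. Write $G := \Gal(\cQ_{R}/\K)$, $G' := \Gal(\cQ_{R'}/\K')$, and let $q : G \to G/G'$ be the quotient map. By the preceding proposition, $G/G'$ is either trivial or isomorphic to $\C^{\times}$. In the first case there is nothing to prove, so assume $G/G' \cong \C^{\times}$.

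The assumption $\SL_{n}(\C) \subset G$ allows us to restrict $q$ to $\SL_{n}(\C)$, giving a morphism of linear algebraic groups $q_{\vert \SL_{n}(\C)} : \SL_{n}(\C) \to \C^{\times}$. I would then argue that any such morphism is trivial: for $n \geq 2$, the group $\SL_{n}(\C)$ is perfect (equal to its own derived subgroup), so its image in the abelian group $\C^{\times}$ must be trivial; equivalently, $\SL_{n}(\C)$ has no nontrivial characters. The case $n=1$ is vacuous since $\SL_{1}(\C)$ is trivial.

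Therefore $\SL_{n}(\C) \subset \ker(q) = G'$, which is the desired conclusion. There is no real obstacle here; the only thing to keep in mind is that the identification of $G'$ with a subgroup of $G$ via the closed immersion $\iota$ (set up before the preceding proposition) must be invoked explicitly so that the inclusion $\SL_{n}(\C) \subset G'$ makes sense as a statement about subgroups of $\Gal(\cQ_{R'}/\K')$.
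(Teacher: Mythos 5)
Your proof is correct and is exactly the argument the paper intends: the corollary is stated without proof precisely because it follows immediately from the preceding proposition via the perfectness of $\SL_{n}(\C)$ (any homomorphism from a perfect group to the abelian group $\C^{\times}$ is trivial, so $\SL_{n}(\C)$ lands in the kernel $\Gal(\cQ_{R'}/\K')$). Your remark about invoking the closed immersion $\iota$ to make sense of the inclusion is also the right bookkeeping point.
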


%%%%%%%%%%%%%%%%%%%%%%%%%%%%%%%%%%%%%%%%%%%%%%%%%%%%%%%%%%%%%%
\section{Parametrized difference Galois theory}\label{sec:parampv}
%%%%%%%%%%%%%%%%%%%%%%%%%%%%%%%%%%%%%%%%%%%%%%%%%%%%%%%%%%%%%%%
We will use standard notions and notation of difference and differential algebra which can be found in \cite{Cohn:difference} and \cite{VdPS97}. 

%%%%%%%%%%%%%%%%%%%%%%%%%%%%%%%%%%%%%%%%%%%%%%%%%%%%%%%%%%%%%%%%%%%%%%%%%%%%%%%%%%%%%%%%%%%%%%%
\subsection{Differential algebra}\label{section diff alg}
%%%%%%%%%%%%%%%%%%%%%%%%%%%%%%%%%%%%%%%%%%%%%%%%%%%%%%%%%%%%%%%%%%%%%%%%%%%%%%%%%%%%%%%%%%%%%%%%%%%%

A $\delta$-ring $(R,\delta)$ is a ring $R$  endowed with a derivation $\delta:R\rightarrow R$ (this means that $\delta$ is additive and satisfies the Leibniz
rule ${\delta(ab)=\delta(a)b+a\delta(b)}$, for all $a,b \in R$).  If $R$ is a field, then $(R,\delta)$ is called a $\delta$-field. To simplify the notation, we will, most of the time,
write $R$ instead of $(R, \delta)$.

We let $R^\delta$ denote the ring of $\delta$-constants of the $\delta$-ring $R$, {\it i.e.}, 
$$
R^\delta:=\{c \in R \ | \ \delta(c)=0\}.
$$
If $R^{\delta}$ is a field, it is called the field of $\delta$-constants. 

Given a $\delta$-ring $(R,\delta)$, a $\widetilde{\delta}$-ring $(\widetilde{R},\widetilde{\delta})$ is a $R$-$\delta$-algebra  
if $\widetilde{R}$ is a ring extension of $R$ and $\widetilde{\delta}_{\vert R}=\delta$;
in this case, we will often denote $\widetilde{\delta}$ by $\delta$. 
Let $\K$ be a $\delta$-field. If $\L$ is a $\K$-$\delta$-algebra and a field, we say that 
$\L/\K$ is a $\delta$-field extension.  Let  $R$ be a $\K$-$\delta$-algebra and let 
 $a_1,\dots,a_n \in R$. We let $\K\{a_1,\dots,a_n\}$ denote the smallest $\K$-$\delta$-subalgebra  of $R$ containing  $a_1,\dots,a_n$. 
Let $\L/\K$ be a $\delta$-field extension and let $a_1,\dots,a_n \in \L$. We let $\K\langle a_1,\dots,a_n\rangle$ denote the smallest $\K$-$\delta$-subfield of $\L$ containing  $a_1,\dots,a_n$.

The ring of $\delta$-polynomials in the differential indeterminates $y_1,\ldots,y_n$ and with coefficients in a differential field $(\K,\delta)$,  denoted by $\K\{y_1,\ldots,y_n\}$, is the ring of polynomials in the indeterminates $\{\delta^j y_i\:|\: j \in \N, 1\le i\le n\}$ with coefficients in $\K$.

Let $R$ be be a $\K$-$\delta$-algebra and let $a_1, \dots,a_n \in R$. If there exists a nonzero $\delta$-polynomial $P \in \K\{y_1,\ldots,y_n\}$ such that 
$P(a_1,\dots,a_n)=0$, then we say that  $a_1,\dots,a_n$ are 
hyperalgebraically dependent over $\K$. Otherwise, we say that $a_1,\dots,a_n$ are hyperalgebraically independent over $\K$. 

 A $\delta$-field $\k$ is called differentially closed if, for every (finite) set of $\delta$-polynomials $\mathcal F$, if the system of differential equations $\mathcal F=0$ has a solution with entries in some $\delta$-field extension $\L$, then it has a solution with entries in $\k$. Note that the field of $\delta$-constants $\k^{\delta}$ of any differentially closed $\delta$-field $\k$ is algebraically closed. Any $\delta$-field $\k$ has a differential closure
$\widetilde{\k}$, {\it i.e.}, a differentially closed  $\delta$-field extension, and we have $\widetilde{\k}^{\delta}=\k$.

From now on, we consider a differentially closed $\delta$-field $\k$. 

A subset $W \subset \k^{n}$ is Kolchin-closed (or $\delta$-closed, for short) if there exists ${S \subset \k\{y_{1},\dots,y_{n}\}}$ such that
$$
W=
\left\{ a \in \k^n\:|\: \forall f \in S, f(a)=0\right\}.
$$
The Kochin-closed subsets of $\k^{n}$ are the closed sets of a topology on $\k^{n}$, called the Kolchin topology. 
The Kolchin-closure of $W \subset \k^{n}$ is the closure of $W$ in $\k^{n}$ for the Kolchin topology.

Following Cassidy in \cite[Chapter~II, Section~1, p.~905]{C72}\label{def:LDAG}, we say that a subgroup $G \subset \GL_{n}(\k) \subset \k^{n \times n}$ is a linear differential algebraic group ({\it LDAG}) if $G$ is the intersection
of a Kolchin-closed subset of $\k^{n \times n}$ (identified with $\k^{n^{2}}$) with $\GL_n(\k)$.

A $\delta$-closed subgroup, or $\delta$-subgroup for short, of an LDAG is a subgroup that is Kolchin-closed. The Zariski-closure of a LDAG $G\subset\GL_n(\k)$ is denoted by $\overline{G}$ and is a linear algebraic group.

We will use the following fundamental result. 

\begin{propo}[{\cite[Proposition 42]{C72}}]\label{propo3}
Let $\k$ be a differentially closed field. Let $\Const:=\k^\delta$. A  Zariski-dense $\delta$-closed subgroup of  $\SL_{n}(\k)$ is either conjugate to  $\SL_{n}(\Const)$ or  equal to $\SL_{n}(\k)$.
\end{propo}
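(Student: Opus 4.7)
The plan is to invoke Cassidy's structure theory of linear differential algebraic groups, which exploits the rigidity of the simple algebraic group $\SL_n$ under Zariski-density.

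First, I would reduce to the case where $G$ is Kolchin-connected by passing to its identity component $G^{\circ}$. Since $\SL_n(\k)$ is Zariski-irreducible and $[G:G^{\circ}] < \infty$, a union-of-cosets argument shows that $G^{\circ}$ remains Zariski-dense in $\SL_n(\k)$. Once the dichotomy is established for $G^{\circ}$, the result for $G$ will follow from the computation that $N_{\SL_n(\k)}(\SL_n(\Const)) = \SL_n(\Const)$: this rests on the Zariski-density of $\SL_n(\Const)$ in $\SL_n(\k)$, which forces any element of $\SL_n(\k)$ centralizing $\mathfrak{sl}_n(\Const)$ under the adjoint action to centralize all of $\mathfrak{sl}_n(\k)$, and hence to lie in the center of $\SL_n(\k)$.

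Next, I would pass to the Kolchin Lie algebra $\mathfrak{g} \subset \mathfrak{sl}_n(\k)$ of $G$; a tangent-space computation shows that $\mathfrak{g}$ is a $\delta$-closed Lie subalgebra whose Zariski closure is the Lie algebra of $\overline{G} = \SL_n(\k)$, namely all of $\mathfrak{sl}_n(\k)$. The heart of the proof is Cassidy's classification of Zariski-dense $\delta$-closed Lie subalgebras of a simple Lie algebra over a differentially closed field \cite[Chapter~II]{C72}: up to inner conjugation by an element of $\GL_n(\k)$, $\mathfrak{g}$ is of the form $\mathfrak{sl}_n(F)$ for some $\delta$-subfield $F$ of $\k$. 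The Zariski-density of $\mathfrak{g}$ in $\mathfrak{sl}_n(\k)$ then forces $F = \k$, or $F$ to have $\Const$ as field of $\delta$-constants in such a way that $F$ is $\GL_n(\k)$-conjugate to $\Const$ inside the ambient differentially closed field.

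Finally, I would reconstruct $G^{\circ}$ from $\mathfrak{g}$ via the standard Lie-correspondence for Kolchin-connected $\delta$-closed subgroups of $\GL_n(\k)$, thereby concluding that $G^{\circ} = \SL_n(\k)$ or $G^{\circ}$ is conjugate to $\SL_n(\Const)$, from which the full statement follows by the normalizer computation above. The main obstacle is Cassidy's Lie-algebra classification step, which encapsulates the substantial differential-algebraic content of the theorem and rests on his structure theory of simple Lie algebras over differentially closed fields; the remaining steps are essentially formal consequences of Zariski-density and standard facts about linear algebraic groups.
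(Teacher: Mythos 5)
The paper does not prove this proposition at all: it is imported verbatim as Cassidy's Proposition~42 of \cite{C72}, so there is no internal proof to compare against. Judged on its own terms, your sketch is a recognizable outline of the \emph{Lie-algebraic} route to such results, but it does not constitute a proof: the entire differential-algebraic content is deferred to ``Cassidy's classification of Zariski-dense $\delta$-closed Lie subalgebras of a simple Lie algebra,'' which is a theorem of at least the same depth as the statement you are proving, and which lives in Cassidy's later work on semisimple differential algebraic groups, not in Chapter~II of \cite{C72} (Cassidy's 1972 proof of Proposition~42 instead works directly with the Zariski-dense $\delta$-subgroups of the unipotent root subgroups $\mathbb{G}_a$ and the commutation relations among them). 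Likewise, the ``standard Lie-correspondence for Kolchin-connected $\delta$-closed subgroups of $\GL_n(\k)$'' that you invoke to pass back from $\mathfrak{g}$ to $G^{\circ}$ is not standard for linear differential algebraic groups; that a Zariski-dense $\delta$-subgroup of a semisimple group is determined by its Kolchin Lie algebra is itself a nontrivial theorem. So the proposal reduces the statement to two unproved black boxes rather than proving it.

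Two local points also need repair. First, in the normalizer step you argue as if normalizing $\SL_n(\Const)$ meant centralizing $\mathfrak{sl}_n(\Const)$; it does not. One must first show that the automorphism of $\SL_n(\Const)$ induced by conjugation is inner (up to the graph automorphism, which conjugation cannot induce), and only then does the centralizer-plus-density argument apply. This is exactly the content of Lemma~\ref{lem normalisateur} quoted from \cite{MitSing}, and it yields $\k^{\times}\SL_n(\Const)$ as normalizer in $\GL_n(\k)$, hence $\SL_n(\Const)$ in $\SL_n(\k)$ only after intersecting with the determinant-one condition. Second, the claim that ``Zariski-density of $\mathfrak{g}$ forces $F=\k$ or $F$ conjugate to $\Const$'' is false as stated: $\mathfrak{sl}_n(F)$ is Zariski-dense in $\mathfrak{sl}_n(\k)$ for \emph{every} infinite subfield $F$. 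What pins $F$ down to $\k$ or (a conjugate of) $\Const$ is the requirement that $\mathfrak{g}$ be Kolchin-closed, since the only Kolchin-closed infinite subfields of a differentially closed field of characteristic zero with one derivation are $\k$ and $\Const$.
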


We will also use the following result.

\begin{lem}[{\cite[Lemma 11]{MitSing}}]\label{lem normalisateur}
Let $\k$ be a differentially closed field. Let $\Const:=\k^\delta$. Then, the normalizer of $\SL_{n}(\Const)$ in $\GL_{n}(\k)$ is $\k^{\times} \SL_{n}(\Const)$. 
\end{lem}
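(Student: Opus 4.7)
The plan is to prove each inclusion separately. The inclusion $\k^{\times}\SL_{n}(\Const)\subseteq N_{\GL_{n}(\k)}(\SL_{n}(\Const))$ is clear since $\k^{\times}$ is the center of $\GL_{n}(\k)$ and $\SL_{n}(\Const)$ normalizes itself. For the reverse inclusion, fix $g\in \GL_{n}(\k)$ with $g\,\SL_{n}(\Const)\,g^{-1}=\SL_{n}(\Const)$; the case $n=1$ is trivial, so I assume $n\geq 2$.

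My strategy is to exploit the fact that $\SL_{n}(\Const)$ contains the unipotent elements $I+E_{ij}$ for all $i\neq j$ (this uses $E_{ij}^{2}=0$). The normalization hypothesis yields $I+gE_{ij}g^{-1}\in \SL_{n}(\Const)$, which forces every entry of $gE_{ij}g^{-1}$ to lie in $\Const$. Writing $v_{i}$ for the $i$-th column of $g$ and $w_{j}^{T}$ for the $j$-th row of $g^{-1}$, we have $gE_{ij}g^{-1}=v_{i}w_{j}^{T}$, so the rank-one matrix $v_{i}w_{j}^{T}$ has all entries in $\Const$ for each pair $i\neq j$.

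I next claim that every column $v_{i}$ of $g$ factors as $v_{i}=c_{i}u_{i}$ with $c_{i}\in \k^{\times}$ and $u_{i}\in \Const^{n}$. Indeed, picking any $j\neq i$ and any $l_{0}$ with $w_{j,l_{0}}\neq 0$ (possible since $w_{j}$ is a row of the invertible matrix $g^{-1}$), the relations $v_{k,i}w_{j,l_{0}}\in \Const$ for all $k$ give $v_{i}\in w_{j,l_{0}}^{-1}\cdot \Const^{n}$. Assembling the $u_{i}$ into a matrix $U$ and setting $D=\operatorname{diag}(c_{1},\dots,c_{n})$, one obtains $g=UD$ with $U\in \Mat_{n}(\Const)$, and the invertibility of $g$ forces $U\in \GL_{n}(\Const)$. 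Since $U\in \GL_{n}(\Const)$ automatically normalizes $\SL_{n}(\Const)$, the diagonal factor $D$ must also normalize $\SL_{n}(\Const)$. Applying this to $I+E_{ij}\in \SL_{n}(\Const)$ together with the identity $DE_{ij}D^{-1}=(c_{i}/c_{j})E_{ij}$ forces $c_{i}/c_{j}\in \Const$ for all $i\neq j$. Consequently, $D=c_{1}\cdot\operatorname{diag}(1,c_{2}/c_{1},\dots,c_{n}/c_{1})\in \k^{\times}\cdot \GL_{n}(\Const)$, and we conclude $g=UD\in \k^{\times}\GL_{n}(\Const)=\k^{\times}\SL_{n}(\Const)$; the last equality uses that $\Const$ is algebraically closed, so every element of $\GL_{n}(\Const)$ equals an $n$-th root of its determinant times a matrix of determinant one.

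The main obstacle will be the rank-one extraction step: one must verify that the decomposition $v_{i}=c_{i}u_{i}$ forced by one choice of $(j,l_{0})$ is consistent with other choices up to a $\Const^{\times}$-scalar (this ambiguity is absorbed harmlessly into the diagonal factor $D$), and that the resulting $U$ and $D$ really do yield a matrix factorization of $g$. Beyond this bookkeeping, everything is linear algebra combined with the ``test against one-parameter unipotent subgroups'' trick, which converts the group-theoretic normalization hypothesis into entrywise constancy statements.
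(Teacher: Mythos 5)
Your argument is correct. Note first that the paper does not prove this lemma at all: it is quoted as \cite[Lemma 11]{MitSing} (Mitschi--Singer) and used as a black box, so there is no in-paper proof to compare against; your proposal supplies a self-contained substitute. The mechanism is sound throughout: testing the normalization hypothesis against the transvections $I+E_{ij}$ ($i\neq j$), which lie in $\SL_{n}(\Const)$, correctly forces each rank-one matrix $v_{i}w_{j}^{T}=gE_{ij}g^{-1}$ into $\Mat_{n}(\Const)$; the column-by-column extraction $v_{i}=c_{i}u_{i}$ then gives $g=UD$ with $U\in\GL_{n}(\Const)$ and $D$ diagonal, and since $U$ manifestly normalizes $\SL_{n}(\Const)$, so does $D=U^{-1}g$, whence $c_{i}/c_{j}\in\Const$ and $D\in\k^{\times}\GL_{n}(\Const)$. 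The ``consistency'' worry you flag at the end is a non-issue: you only need to make one choice of $(j,l_{0})$ per column to define $c_{i}$ and $u_{i}$, and $g=UD$ then holds by construction, so no compatibility between different choices is ever required. The only external inputs are that $\Const$ contains $0$ and $1$ (so the transvections really lie in $\SL_{n}(\Const)$) and that $\Const$ is algebraically closed for the final reduction $\k^{\times}\GL_{n}(\Const)=\k^{\times}\SL_{n}(\Const)$; the latter is guaranteed here because $\Const=\k^{\delta}$ with $\k$ differentially closed, a fact the paper records in its preliminaries. Compared with the structural route one might expect from the cited source (e.g.\ analyzing the automorphism induced on $\SL_{n}(\Const)$ or on its Lie algebra by conjugation), your elementary ``test against unipotents'' computation is shorter and requires no classification of automorphisms, at the mild cost of a separate trivial treatment of $n=1$.
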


%%%%%%%%%%%%%%%%%%%%%%%%%%%%%%%%%%%%%%%%%%%%%%%%%%%%%%%
%%%%%%%%%%%%%%%%%%%%%%%%%%%%%%%%%%%%%%%%%%%%%%%%%%%%
\subsection{Difference-differential algebra}
%%%%%%%%%%%%%%%%%%%%%%%%%%%%%%%%%%%%%%%%%%%%%%%%%%
%%%%%%%%%%%%%%%%%%%%%%%%%%%%%%%%%%%%%%%%%%%%%%%%%

 A $(\phi, \d)$-ring $(R,\phi,\d)$ is a ring $R$ endowed with a ring automorphism $\phi$ and a derivation $\delta : R \rightarrow R$ (in other words, $(R,\phi)$ is a $\phi$-ring and $(R,\delta)$ is a $\delta$-ring) such that $\phi$ commutes with $\delta$. 
If $R$ is a field, then $(R,\phi,\d)$ is called a $(\phi, \d)$-field. If there is no possible confusion, we will write  $R$ instead of $(R,\phi, \d)$.

We have straightforward notions of $\pd$-ideals, $\pd$-morphisms, $\pd$-algebras, {\it etc}, similar to the notions recalled in Sections \ref{sec1} and \ref{section diff alg}. We omit the details and refer for instance to \cite[Section 6.2]{HS}, and to the references therein, for details.

In order to use the parametrized difference Galois theory developed in \cite{HS}, we will need to work with a base $(\phi, \d)$-field $\K$ such that $\k:=\K^\phi$ is differentially closed. 
Most of the common
function fields 
do not satisfy this condition. The following result shows that  any $\pd$-field with algebraically closed field of constants  can be embedded into a $\pd$-field
with differentially closed field of constants.  
The following lemma appears in an unpublished proof due to M. Singer (letter from Michael Singer to the second author, February 25, 2010) of the fact that the Mahler function $\sum_{n \geq 0} z^{p^{n}}$ is hypertranscendental.
It is close to \cite[Proposition 2.4]{CHS} and \cite[Lemma 1.11]{VdPS97}, but it is not completely similar.

 \begin{lem}\label{lem:extconst}
  Let $F$ be a $\pd$-field with $\k:=F^\phi$ algebraically closed. Let $\widetilde{\k}$ be a differentially closed field 
  containing $\k$. Then, the ring $\widetilde{\k} \otimes_\k F$ is an integral domain whose
  fraction field $\K$ is a $\pd$-field extension of $F$ such that $\K^\phi=\widetilde{\k}$.
  \end{lem}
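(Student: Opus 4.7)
My plan is to first establish that $R := \widetilde{\k}\otimes_{\k}F$ is a $\phi$-simple integral domain, and then to transfer the computation of the $\phi$-invariants from $R$ to its fraction field $\K$. That $R$ is a domain is a classical fact from commutative algebra: since $\k$ is algebraically closed, the tensor product of any two field extensions of $\k$ is an integral domain (Bourbaki, Alg\`ebre, Ch.~V). The operators $\phi$ and $\delta$ extend to $R$ via $\mathrm{id}_{\widetilde{\k}}\otimes \phi_{F}$ and $\delta_{\widetilde{\k}}\otimes \mathrm{id}_{F}+\mathrm{id}_{\widetilde{\k}}\otimes \delta_{F}$ respectively, commute on each tensor factor hence on $R$, and extend uniquely to $\K$ by the usual quotient formulas, turning $\K$ into a $\pd$-field extension of $F$. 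Moreover, writing any $x\in R^{\phi}$ as $\sum_{i=1}^{n} c_{i}\otimes f_{i}$ with the $c_{i}\in\widetilde{\k}$ linearly independent over $\k$, the relation $\phi(x)=x$ together with the fact that a $\k$-basis of $\widetilde{\k}$ remains an $F$-basis of $R$ forces $\phi(f_{i})=f_{i}$, so $f_{i}\in F^{\phi}=\k$ and $x\in\widetilde{\k}\otimes 1\simeq\widetilde{\k}$; thus $R^{\phi}=\widetilde{\k}$.

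The heart of the argument is to show that $R$ is $\phi$-simple. Given a nonzero $\phi$-ideal $I$ of $R$, I pick $x\in I\setminus\{0\}$ minimizing the length $n$ of an expansion $x=\sum_{i=1}^{n} e_{i}\otimes b_{i}$ with the $e_{i}\in\widetilde{\k}$ linearly independent over $\k$ and the $b_{i}\in F^{\times}$. If $n=1$, then $x=e_{1}\otimes b_{1}$ is invertible in $R$, so $I=R$. If $n\geq 2$, both $b_{1}^{-1}x$ and $\phi(b_{1})^{-1}\phi(x)$ lie in $I$ (since $F\subset R$ and $I$ is $\phi$-stable), and their difference
\[
y \;=\; \phi(b_{1})^{-1}\phi(x)-b_{1}^{-1}x \;=\; \sum_{i=2}^{n} e_{i}\otimes\left(\frac{\phi(b_{i})}{\phi(b_{1})}-\frac{b_{i}}{b_{1}}\right)
\]
has length at most $n-1$ and hence vanishes by minimality. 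This forces $\phi(b_{i}/b_{1})=b_{i}/b_{1}$, i.e., $b_{i}/b_{1}\in F^{\phi}=\k$, so writing $\mu_{i}:=b_{i}/b_{1}\in\k$ (with $\mu_{1}=1$) yields $x=\bigl(\sum_{i}\mu_{i}e_{i}\bigr)\otimes b_{1}$. The first factor is a nonzero element of $\widetilde{\k}$ (by $\k$-linear independence of the $e_{i}$) and the second lies in $F^{\times}$, so $x$ is again invertible in $R$ and $I=R$.

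Once $\phi$-simplicity is established, the conclusion is immediate. For any $x\in\K^{\phi}$, the denominator ideal $I_{x}:=\{r\in R\mid xr\in R\}$ is a nonzero $\phi$-stable ideal of $R$: nonzero because $x\in\mathrm{Frac}(R)$, and $\phi$-stable because $\phi(x)=x$ and $\phi(R)=R$. By $\phi$-simplicity, $I_{x}=R$, so $1\in I_{x}$ and $x\in R$; combining with $R^{\phi}=\widetilde{\k}$ gives $x\in\widetilde{\k}$, as desired. I expect the main obstacle to be the $\phi$-simplicity step, because one has to reduce the length of a witness $x$ while staying inside $I$: the decisive point is that $b_{1}$ and $\phi(b_{1})$ are invertible in $F\subset R$, which allows the rescaling that kills the leading coordinate and triggers the minimality argument.
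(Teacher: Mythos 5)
Your proof is correct and follows essentially the same route as the paper's: the domain property from $\k$ being algebraically closed, $\phi$-simplicity of $\widetilde{\k}\otimes_{\k}F$ via a minimal-length element whose $\phi$-difference must vanish, and the denominator-ideal argument to pull a $\phi$-constant of $\K$ back into the tensor product. The only cosmetic difference is that you normalize by dividing by $b_{1}$ at the comparison step rather than assuming $f_{1}=1$ up front, and you isolate $R^{\phi}=\widetilde{\k}$ as a separate preliminary step.
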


\begin{proof}
The first assertion follows from the fact that, since $\k$ is algebraically closed, the extension $\widetilde{\k}/\k$ is regular. 

In what follows, we see $F$ in $\widetilde{\k} \otimes_\k F$ and $\widetilde{\k}$ in $\widetilde{\k} \otimes_\k F$ {\it via} the maps 
$$
\begin{array}{rcl}
F & \rightarrow & \widetilde{\k} \otimes_\k F \\ 
f & \mapsto & 1 \otimes f
\end{array} 
\text{ \ and \ }
\begin{array}{rcl}
\widetilde{\k} & \rightarrow & \widetilde{\k} \otimes_\k F \\ 
a & \mapsto & a \otimes 1.
\end{array}
$$

The maps  
$$
\begin{array}{rcl}
\phi: \widetilde{\k} \otimes_\k F & \rightarrow & \widetilde{\k} \otimes_\k F \\ (a,b)& \mapsto& a \otimes \phi(b)
\end{array}
\text{ \ and \ } 
\begin{array}{rcl}
\delta: \widetilde{\k} \otimes_\k F & \rightarrow & \widetilde{\k} \otimes_\k F \\  
(a,b) &\mapsto& \delta(a) \otimes b + a \otimes \delta(b)
\end{array}
$$
are well-defined and endow $\widetilde{\k} \otimes_\k F$ with a structure of $F$-$\pd$-algebra.

To prove the second statement, we first show that any $\phi$-ideal of $\widetilde{\k} \otimes_\k F$ 
is trivial. Let $(c_i)_{i \in I}$ be a $\k$-basis of $\widetilde{\k}$.  
Let $\mathfrak I$ be a nonzero  $\phi$-ideal of  $\widetilde{\k} \otimes_\k F$ and let $w=\sum_{i=1}^n c_i \otimes f_i$ be a nonzero element of 
$\mathfrak I$  with $f_i \in F$ and $n$ minimal. Without loss of generality, we can assume that $f_1=1$. Since $\phi(w)-w=\sum_{i=2}^n c_i \otimes(\phi(f_i)-f_i)$ is an element of $\mathfrak I$ with   
fewer terms than $w$, it must be equal to $0$. This implies that, for all $i \in \{1, \dots, n\}$, $\phi(f_i)=f_i$, {\it i.e.}, $f_i \in \k$. Then, $w=(\sum_{i=1}^n c_i f_i)\otimes 1$ is invertible in $\widetilde{\k} \otimes_\k F$ and, hence, $\mathfrak{I}=\widetilde{\k} \otimes_\k F$.

Let $c \in \K^\phi$. Since $\mathfrak{I}:=\{ d \in \widetilde{\k} \otimes_\k F \ | \ dc \in \widetilde{\k} \otimes_\k F\}$ is a nonzero $\phi$-ideal of $\widetilde{\k} \otimes_\k F$, we must have $\mathfrak{I}=\widetilde{\k} \otimes_\k F$. In particular, $1 \in \mathfrak{I}$ and, hence, $c \in \widetilde{\k} \otimes_\k F$. Writing 
$c=\sum_{i \in I} c_i \otimes f_i$, we see that $\phi(c)=c$ implies $\phi(f_i)=f_i$ for all $i \in \{1,\dots, n\}$. Therefore, the  $f_i$
are in $\k$ and, hence, $c$ belongs to $\widetilde{\k}$.
\end{proof}

%%%%%%%%%%%%%%%%%%%%%%%%%%%%%%%%%%%%%%%%%%%%%%%%%%%%%%%
%%%%%%%%%%%%%%%%%%%%%%%%%%%%%%%%%%%%%%%%%%%%%%%%%%%%
\subsection{Parametrized difference Galois theory}
%%%%%%%%%%%%%%%%%%%%%%%%%%%%%%%%%%%%%%%%%%%%%%%%%%
%%%%%%%%%%%%%%%%%%%%%%%%%%%%%%%%%%%%%%%%%%%%%%%%%

For details on what follows, we refer to  \cite{HS}.

Let $\K$ be a $(\phi,\d)$-field with $\k:=\K^{\phi}$ differentially closed.
 We consider the following linear difference system 
\begin{equation}\label{eq9}
\phi (Y)=AY
\end{equation} 
with $A \in \GL_{n}(\K)$ for some integer $n \geq 1$.

By  \cite[$\S$ 6.2.1]{HS}, there exists a $\K$-$(\phi,\d)$-algebra $S$  such that 
\begin{itemize}
\item[1)] there exists $U \in \GL_{n}(S)$ such that $\phi (U)=AU$ (such a $U$ is called a
fundamental matrix of solutions of (\ref{eq9}));
\item[2)] $S$ is generated, as $\K$-$\de$-algebra, by the entries of $U$ and $\det(U)^{-1}$;
\item[3)] the only $(\phi,\d)$-ideals of $S$ are $\{0\}$ and $S$.
\end{itemize}
Such a $S$ is called a parametrized Picard-Vessiot ring, or PPV ring for short, for (\ref{eq9})  over $\K$. It is unique up to isomorphism of $\K$-$(\phi,\d)$-algebras.  A PPV ring is not always an integral domain. However, there exist idempotent elements $e_{1},\dots,e_{s}$ of $R$ such that $R=R_{1} \oplus \cdots \oplus R_{s}$ where the $R_{i}:=Re_{i}$ are integral domains stable by $\delta$ and transitively permuted by $\phi$.
In particular, $S$ has no nilpotent element and one can consider 
its total ring of quotients $\cQ_S$. It can be decomposed as the direct sum $\cQ_S=K_{1} \oplus \cdots \oplus K_{s}$ of the fields of fractions $K_{i}$ of the $R_{i}$. The ring $\cQ_S$ has a natural structure of $R$-$(\phi,\delta)$-algebra and we have ${\cQ_S}^{\phi}=\k$. Moreover, the $K_{i}$ are transitively permuted by $\phi$. We call the $(\phi,\delta)$-ring $\cQ_S$ a total PPV ring for \eqref{eq9} over $\K$. 

The parametrized difference Galois group $\Gal^{\d}(\cQ_S/\K)$ of $S$ over $(\K,\phi,\d)$ 
is the group of 
$\K$-$\pd$-automorphisms of $\cQ_{S}$:
$$
\Gal^{\d}(\cQ_S/\K):=\{ \sigma \in \mathrm{Aut}(\cQ_S/\K) \ | \ \phi\circ\sigma=\sigma\circ \phi \text{ and } \d\circ\sigma=\sigma\circ \d \}.
$$

Note that, if $\delta=0$, then we recover the difference Galois groups considered in Section \ref{rappels et complements galois differences}.

A straightforward computation shows that, for any $\sigma \in \Gal^{\d}(\cQ_S/\K)$, there exists a unique $C(\sigma) \in \GL_{n}(\k)$ such that $\sigma(U)=UC(\sigma)$.  By \cite[Proposition~6.18]{HS}, the faithful representation
\begin{eqnarray*}
 \Gal^\de(\cQ_S/\K) & \rightarrow & \GL_{n}(\k) \\ 
 \sigma & \mapsto & C(\sigma)
\end{eqnarray*}
identifies  $\Gal^\de(\cQ_S/\K) $ with a linear differential algebraic subgroup of $\GL_{n}(\k)$. If we choose another fundamental matrix of solutions $U$, we find a conjugate representation. 

The parametrized difference Galois group $\Gal^\de(\cQ_S/\K)$ of \eqref{eq9} reflects the differential algebraic relations between the solutions of \eqref{eq9}. In particular, the $\delta$-dimension of $\Gal^\de(\cQ_S/\K)$ coincides with the $\delta$-transcendence degree of the $K_{i}$ over $\K$ (see \cite[Proposition 6.26]{HS} for definitions and details). 

A parametrized Galois correspondence holds between the $\delta$-closed subgroups of  $\Gal^\de(\cQ_S/\K) $
and the  $\K$-$\pd$-subalgebras $F$ of  $\cQ_S$ such that every nonzero divisor of $F$ is a unit of $F$ (see for instance \cite[Theorem~6.20]{HS}).  Abusing notation, we still let 
$\Galdelta(\cQ_S/F)$ denote the group of $F$-$\pd$-automorphisms of $\cQ_S$. The following proposition
is at the heart of the parametrized Galois correspondence.

\begin{propo}[{\cite[Theorem 6.20]{HS}}]\label{propo:ppvcorres}
Let $S$ be a PPV ring over $\K$.  Let   $F$  be a $\K$-$\pd$-subalgebra of  $\cQ_S$ such that every nonzero divisor of $F$ is a unit of $F$. Let $H$ be a $\delta$-closed subgroup of $\Galdelta(\cQ_S/\K)$. Then, the following hold: 
\begin{itemize}
\item $\cQ_S^{\Galdelta(\cQ_S/F)}:=\{f \in \cQ_S \ |Ê\ \forall \tau \in \Galdelta(\cQ_S/F), \tau(f)=f \} =F$;
\item $\Galdelta(\cQ_S/\cQ_S^H)=H$.
\end{itemize}
\end{propo}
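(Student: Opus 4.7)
My plan is to follow the classical pattern of Galois correspondence proofs (à la Kolchin for differential fields, and van der Put--Singer for difference fields), transposed to the parametrized setting, exploiting the torsor structure of the PPV ring.

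First I would reduce to the integral domain case. Since $S$ decomposes as $S=R_1\oplus\cdots\oplus R_s$, with the $R_i$ integral domains permuted transitively by $\phi$ and stable under $\delta$, the total ring $\cQ_S$ is a product of the fraction fields $K_1,\dots,K_s$. The condition that every nonzero divisor of $F$ is a unit forces $F$ to be a product $F=\prod_i F_i$ of subfields. By replacing $S$ with $R_1$ and $\phi$ with $\phi^s$, the setup becomes that of a classical PPV extension with $R_1$ an integral domain whose field of fractions is $K_1$; this reduction is standard and lets me focus on the field case.

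Next, for the equality $\cQ_S^{\Galdelta(\cQ_S/F)}=F$, the inclusion $F\subset\cQ_S^{\Galdelta(\cQ_S/F)}$ is tautological. For the converse I would show that if $f\in\cQ_S\setminus F$, then there exists $\tau\in\Galdelta(\cQ_S/F)$ with $\tau(f)\neq f$. The key tool is that $S$ is the coordinate ring of a torsor under the affine differential algebraic group scheme $\Galdelta(\cQ_S/\K)$ over $F$: after tensoring with a suitable extension, one gets ${S\otimes_F S\cong S\otimes_{\K}\k\{\Galdelta(\cQ_S/F)\}}$, and since $\k$ is differentially closed, the $\k$-points are Kolchin-dense in the group scheme, so any element not in $F$ is separated by some automorphism coming from an actual $\k$-point. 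Here one must pay attention that differential closure of $\k=\K^{\phi}$ is what guarantees enough points exist to perform this separation.

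For the second equality $\Galdelta(\cQ_S/\cQ_S^H)=H$, the inclusion $H\subset\Galdelta(\cQ_S/\cQ_S^H)$ is immediate from the definition of $\cQ_S^H$. The opposite inclusion requires showing that $H$ is Kolchin-closed (given) and that it cannot be enlarged while preserving $\cQ_S^H$. I would argue by contradiction: if $G'=\Galdelta(\cQ_S/\cQ_S^H)$ strictly contains $H$, then by the first part (applied to $F:=\cQ_S^H$) we get $\cQ_S^{G'}=\cQ_S^H$, while on the other hand $H\subsetneq G'$ forces the existence of an element $g\in\cQ_S\setminus\cQ_S^H$ fixed by $H$ but not by some element of $G'\setminus H$, which would contradict $g\in\cQ_S^H=\cQ_S^{G'}$. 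The technical input is to know that $H$ being Kolchin-closed is equivalent to being the full stabilizer of $\cQ_S^H$; this follows from the density argument in the torsor, combined with Cassidy's characterization of $\delta$-closed subgroups as being determined by their invariants.

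The main obstacle is the density step in the torsor argument: one needs the fact that an affine differential algebraic group over a differentially closed field is determined by its $\k$-points and, dually, that a function on such a group scheme vanishing on all $\k$-points vanishes identically. This is precisely the reason the theory is developed over a differentially closed base $\k$, and is also where the reduction to the integral case becomes delicate, since the idempotents $e_1,\dots,e_s$ must be tracked through the torsor isomorphism. Once this density is in hand, both equalities reduce to the formal manipulations above.
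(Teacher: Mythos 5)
A preliminary remark: the paper does not prove this proposition at all --- it is imported verbatim as \cite[Theorem 6.20]{HS}, so there is no internal argument to compare yours against; what follows judges your plan on its own terms. Your strategy for the first bullet is the standard one and is essentially what Hardouin--Singer do: view $\cQ_S$ as a total PPV ring over $F$ (one should check explicitly that the compositum of $S$ and $F$ is again a PPV ring over $F$; constants do not grow because $F^{\phi}\subseteq \cQ_S^{\phi}=\k$), invoke the torsor isomorphism $S\otimes_F S\cong S\otimes_{\k}\k\{\Galdelta(\cQ_S/F)\}$, and separate any $f\notin F$ by a $\k$-point, the differential closedness of $\k$ being exactly what makes the $\k$-points Kolchin-dense. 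Your preliminary reduction to the integral case by replacing $\phi$ with $\phi^{s}$ is dispensable and mildly hazardous (the Galois group of the $\phi^{s}$-system on $R_1$ is not the group you are trying to compute), but it does not derail the argument.

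The genuine gap is in the second bullet. Writing $G':=\Galdelta(\cQ_S/\cQ_S^{H})$, the inclusion $H\subseteq G'$ is formal, but your argument for $G'\subseteq H$ is circular. Applying the first bullet to $F:=\cQ_S^{H}$ yields $\cQ_S^{G'}=\cQ_S^{H}$, which is perfectly compatible with $H\subsetneq G'$ and produces no contradiction by itself. The element you posit, ``$g\in\cQ_S\setminus\cQ_S^{H}$ fixed by $H$'', cannot exist by the very definition of $\cQ_S^{H}$; what you actually need is an element of $\cQ_S$ fixed by $H$ but moved by some $\sigma\in G'\setminus H$, and the existence of such an element for every \emph{proper} Kolchin-closed subgroup $H\subsetneq G'$ is precisely the nontrivial half of the Galois correspondence --- it is not a consequence of the first bullet, and appealing to ``Cassidy's characterization of $\delta$-closed subgroups as being determined by their invariants'' merely restates the claim. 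The missing ingredient is a Chevalley-type argument in the Kolchin category: one realizes $H$ as the stabilizer of a line (or of a suitable $\delta$-ideal) inside a finite-dimensional $G'$-stable subspace of the differential coordinate ring, and then transports this semi-invariant through the torsor isomorphism into $\cQ_S$ to obtain an $H$-invariant element that is not $G'$-invariant. Without this step the proposed proof does not close.
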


Let $S$ be a PPV ring over $\K$ for \eqref{eq9} and let $U \in \GL_n(S)$ be a fundamental matrix of solutions.
Then, the $\K$-$\phi$-algebra $R$ generated by the entries of $U$ and $\det(U)^{-1}$ is a PV ring
for \eqref{eq9} over $\K$ and we have $\cQ_R \subset \cQ_S$. One can identify  $\Gal^{\d}(\cQ_S/\K)$ with a subgroup of $\Gal(\cQ_R/\K)$ by restricting the elements of  $\Gal^{\d}(\cQ_S/\K)$ to $\cQ_{R}$.

\begin{propo}[\cite{HS}, Proposition 2.8]\label{propo:zarclosurePPvgaloisgroup}
The group $\Gal^{\d}(\cQ_S/\K)$ is a Zariski-dense subgroup of  $\Gal(\cQ_R/\K)$.
\end{propo}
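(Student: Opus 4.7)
The plan is to argue by Galois correspondence: let $H$ denote the Zariski closure of $\Gal^{\d}(\cQ_S/\K)$ inside $\Gal(\cQ_R/\K)$, and show $H=\Gal(\cQ_R/\K)$ by computing the fixed ring $\cQ_R^H$ and checking that it equals $\K$. By the Galois correspondence for PV rings (\cite[Theorem~1.29.3]{VdPS97}), $\cQ_R^H=\K$ forces $H=\Gal(\cQ_R/\K)$, which is the statement we want.

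The action of $\Gal^{\d}(\cQ_S/\K)$ on $\cQ_S$ restricts to an action on $\cQ_R$ because $\sigma(U)=UC(\sigma)$ with $C(\sigma)\in \GL_n(\k)$, hence $\sigma$ stabilizes the $\K$-$\phi$-algebra $R$ generated by the entries of $U$ and $\det(U)^{-1}$, and therefore stabilizes $\cQ_R$. This is precisely the inclusion $\Gal^{\d}(\cQ_S/\K)\hookrightarrow \Gal(\cQ_R/\K)$ mentioned just before the proposition, so the Zariski closure $H$ makes sense.

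First, I would check that $\cQ_R^H=\cQ_R^{\Gal^{\d}(\cQ_S/\K)}$. The inclusion ``$\subseteq$'' is immediate. For the reverse inclusion, fix $f\in\cQ_R$ invariant under $\Gal^{\d}(\cQ_S/\K)$; the stabilizer $\mathrm{Stab}(f)=\{g\in \Gal(\cQ_R/\K)\mid g(f)=f\}$ is a Zariski-closed subgroup of $\Gal(\cQ_R/\K)$ (this is the key algebraicity input: $\Gal(\cQ_R/\K)$ acts on $\cQ_R$ as a linear algebraic group on the ring of functions of a torsor, so for any $f$ the condition $g(f)=f$ is Zariski-closed in $g$). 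Since $\mathrm{Stab}(f)$ contains $\Gal^{\d}(\cQ_S/\K)$, it contains its Zariski closure $H$, hence $f\in\cQ_R^H$. Next, I would apply the parametrized Galois correspondence (Proposition~\ref{propo:ppvcorres}) with $F=\K$ to obtain $\cQ_S^{\Gal^{\d}(\cQ_S/\K)}=\K$. Intersecting with $\cQ_R$ gives
\[
\cQ_R^{\Gal^{\d}(\cQ_S/\K)}=\cQ_R\cap \cQ_S^{\Gal^{\d}(\cQ_S/\K)}=\cQ_R\cap \K=\K.
\]
Combining the two identities yields $\cQ_R^H=\K$, and the PV Galois correspondence concludes.

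The main obstacle is the step identifying $\cQ_R^H$ with $\cQ_R^{\Gal^{\d}(\cQ_S/\K)}$, i.e., the fact that invariants are insensitive to Zariski closure. This relies on the structural fact that $\mathrm{Spec}(R)$ is a torsor under $\Gal(\cQ_R/\K)$, so the action on $R$ (and on $\cQ_R$) is algebraic and stabilizers of elements are closed subgroups; everything else is a clean two-line application of the two Galois correspondences.
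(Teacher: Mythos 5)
Your proof is correct and is essentially the standard argument for this statement: the paper itself gives no proof beyond the citation to \cite[Proposition 2.8]{HS}, and the argument there is exactly your two-step use of the Galois correspondences — the parametrized one to get $\cQ_S^{\Gal^{\d}(\cQ_S/\K)}=\K$, and the classical one \cite[Theorem 1.29]{VdPS97} applied to the Zariski closure $H$, together with the observation that stabilizers of elements of $\cQ_R$ are Zariski-closed so that $\cQ_R^{H}=\cQ_R^{\Gal^{\d}(\cQ_S/\K)}$.
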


\subsection{Hypertranscendency criteria for equations of order one}\label{hyp crit generaux}

The hypertranscendence criteria contained in \cite{HS} are stated for $\pd$-fields $\K$  such that the $\delta$-field $\k:=\K^{\phi}$ is differentially closed. Recently some schematic versions (see for instance \cite{Wib} or \cite{DVHAPac}) of \cite{HS} have been developed and allow to work over $\pd$-fields with  algebraically closed field of constants. One could use  this schematic approach to  show that  the hypertranscendence criteria of \cite{HS} still hold over $\pd$-fields with algebraically closed field of constants (not necessarily differentially closed). However, for the sake of clarity and simplicity of exposition, we prefer to show that one can deduce these criteria  directly from the ones contained in \cite{HS}, {\it via} simple descent arguments. The following result is due to Michael Singer.

\begin{propo}\label{propo:hypertransrank1}
Let $\K$ be a $\pd$-field  with $\k:=\K^\phi$ algebraically closed and let $(a,b) \in \K^{\times} \times \K$ . Let $R$ be a $\K$-$\pd$-algebra
and let $v \in R \setminus \{0\}$. \begin{itemize}
\item If $\phi(v)-v=b$ and $v$ is hyperalgebraic over $\K$, then there exist a nonzero linear homogeneous $\delta$-polynomial 
$\mathcal{L}(y) \in \k\{y\}$ and an element $f \in \K$ such that 
$$
\mathcal{L}(b)= \phi(f)-f.
$$
\item 
Assume moreover that $v$ is invertible in $R$. If $\phi(v)=av$ and if $v$ is hyperalgebraic over $\K$, then there exist a nonzero linear homogeneous $\delta$-polynomial 
$\mathcal{L}(y) \in \k\{y\}$ and an element $f \in \K$ such that 
$$
\mathcal{L}\left(\frac{\delta(a)}{a}\right)=\phi(f)-f.
$$
\end{itemize} 
The converse of either statement is true if $R^\phi=\k$.
\end{propo}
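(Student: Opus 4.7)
I reduce the multiplicative statement to the additive one by setting $w := \delta(v)/v$ (using that $v$ is invertible in $R$): a direct computation gives $\phi(w) - w = \delta(a)/a$, and $w$ lies in the $\pd$-algebra generated by $v$, so $v$ hyperalgebraic over $\K$ implies $w$ hyperalgebraic (and conversely, $v$ satisfies the first order $\delta$-equation $\delta(v) = wv$ over $\K\langle w \rangle$, giving the reverse implication). For the converse direction of the additive case (under $R^{\phi} = \k$), applying $\mathcal{L}$ to $\phi(v) - v = b$ and exploiting that $\phi$ commutes with $\delta$, hence with $\mathcal{L}$, yields $\phi(\mathcal{L}(v) - f) = \mathcal{L}(v) - f$, so $\mathcal{L}(v) - f \in R^{\phi} = \k$ and $\mathcal{L}(v) \in \K$ is a nontrivial $\delta$-algebraic relation witnessing that $v$ is hyperalgebraic.

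For the direct direction, the strategy is descent from the differentially closed constants case already proved in \cite{HS}. Using Lemma~\ref{lem:extconst}, I embed $\K$ in $\widetilde{\K} := \mathrm{Frac}(\widetilde{\k} \otimes_{\k} \K)$, a $\pd$-field with $\widetilde{\K}^{\phi} = \widetilde{\k}$ differentially closed. Viewing $v$ in an appropriate $\widetilde{\K}$-$\pd$-algebra extending $R$ (for instance a quotient of $\widetilde{\K} \otimes_{\K} R$ by a $\pd$-prime avoiding $v$), the element $v$ remains nonzero and hyperalgebraic over $\widetilde{\K}$. The rank-one criterion of \cite{HS} then provides a nonzero linear homogeneous $\widetilde{\mathcal{L}}(y) = \sum_{i} \widetilde{c_{i}} \delta^{i}(y) \in \widetilde{\k}\{y\}$ and an element $\widetilde{f} \in \widetilde{\K}$ with $\widetilde{\mathcal{L}}(b) = \phi(\widetilde{f}) - \widetilde{f}$.

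To descend these data to $\K$, set $A := \widetilde{\k} \otimes_{\k} \K \subset \widetilde{\K}$; the left-hand side $\widetilde{\mathcal{L}}(b)$ already lies in $A$ since $\delta^{i}(b) \in \K$. The key sub-claim is that $A$ is $(\phi - \mathrm{id})$-saturated in $\widetilde{\K}$, equivalently $(\widetilde{\K}/A)^{\phi} = 0$; granting this, one may adjust $\widetilde{f}$ modulo $\widetilde{\k} \subset A$ so that $\widetilde{f} \in A$. Fixing a $\k$-basis $(e_{j})_{j \in J}$ of $\widetilde{\k}$ with $e_{0} = 1$, I expand $\widetilde{f} = \sum_{j} e_{j} f_{j}$ and $\widetilde{c_{i}} = \sum_{j} c_{ij} e_{j}$ as finite sums with $f_{j} \in \K$ and $c_{ij} \in \k$, and use that $\phi$ acts as the identity on $\widetilde{\k}$ to equate coefficients in the basis. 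This yields for each $j$ a relation $\sum_{i} c_{ij} \delta^{i}(b) = \phi(f_{j}) - f_{j}$ in $\K$; since some $\widetilde{c_{i}}$ is nonzero, some tuple $(c_{ij_{0}})_{i}$ is nonzero too, and then $\mathcal{L}(y) := \sum_{i} c_{ij_{0}} \delta^{i}(y) \in \k\{y\}$ together with $f := f_{j_{0}} \in \K$ provide the desired data.

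The main obstacle is the saturation sub-claim $(\widetilde{\K}/A)^{\phi} = 0$, which is precisely where the assumption that $\k$ is algebraically closed enters. Heuristically, an element $\widetilde{f} \in \widetilde{\K} \setminus A$ has, when written in lowest terms, a denominator with an irreducible factor whose roots lie outside $\widetilde{\k}$; since $\phi$ fixes $\widetilde{\k}$ pointwise it cannot absorb such a factor, and $\phi(\widetilde{f}) - \widetilde{f}$ then inherits a denominator still outside $A$. Making this precise --- for instance by first reducing to a finitely generated subextension $K_{0}/\k$ of $\widetilde{\k}/\k$ and then arguing valuation-theoretically on $K_{0} \otimes_{\k} \K$ --- is the one step of the proof that lies beyond formal manipulations and the direct invocation of \cite{HS}.
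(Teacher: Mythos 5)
Your reduction of the multiplicative statement to the additive one via $w=\delta(v)/v$, your treatment of the converse, and your overall strategy --- base change to $\operatorname{Frac}(\widetilde{\k}\otimes_\k\K)$ via Lemma~\ref{lem:extconst}, embed $v$ into a PPV situation over the enlarged field, invoke the rank-one criterion of \cite{HS}, then descend --- all match the paper. (The paper is merely more careful than you are at the PPV step: it takes $R=\K\{v\}=\K\{y\}/\mathfrak{I}$, checks that the extension of $\mathfrak{I}$ to $\L\{y\}$ contracts back to $\mathfrak{I}$ and hence stays proper, and only then passes to a maximal $\pd$-ideal to get an honest PPV ring in which the image of $y$ is still hyperalgebraic.)

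The divergence, and the gap, is in the descent. Your route hinges on the sub-claim that $\phi-\operatorname{id}$ is injective on $\widetilde{\K}/A$ with $A=\widetilde{\k}\otimes_\k\K$, which you justify only heuristically. The claim is in fact true, and no valuation theory is needed: the denominator ideal $I=\{d\in A \mid d\widetilde{f}\in A\}$ is a nonzero ideal of $A$, and since $\phi(\widetilde{f})=\widetilde{f}+\widetilde{\mathcal{L}}(b)$ with $\widetilde{\mathcal{L}}(b)\in A$, it is $\phi$-stable; the proof of Lemma~\ref{lem:extconst} shows that every nonzero $\phi$-ideal of $\widetilde{\k}\otimes_\k F$ is the unit ideal (this is exactly how that lemma identifies the $\phi$-constants, i.e.\ your claim in the special case $\widetilde{\mathcal{L}}(b)=0$), so $1\in I$ and $\widetilde{f}\in A$. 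With that, your coefficient-equating in a $\k$-basis of $\widetilde{\k}$ does produce the desired $\mathcal{L}$ and $f$. The paper descends differently and bypasses the rationality question altogether: it normalizes $\mathcal{L}_0$ to be monic, writes $g$ as a ratio of elements of $\widetilde{\k}\otimes_\k\K$ with normalized denominator, expands everything in a $\k$-basis $(h_i)$ of $\K$, and reads the identity $\mathcal{L}_0(b)=\phi(g)-g$ as a finite system of polynomial equations over $\k$ in the unknown coefficients $a_i,b_i,c_i\in\widetilde{\k}$; since this system has a solution in $\widetilde{\k}$ and $\k$ is algebraically closed, it has one in $\k$, which yields $\mathcal{L}$ and $f$ directly. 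Note that this specialization step --- not a saturation property --- is where the paper uses algebraic closedness of $\k$ (beyond its role in Lemma~\ref{lem:extconst} in making $\widetilde{\k}\otimes_\k\K$ a domain). So your argument is completable, but as written the step you flag is a genuine gap, and the missing ingredient is the $\phi$-ideal argument you already had at hand in Lemma~\ref{lem:extconst}.
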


\begin{proof}
Let us prove the first statement. Let $\widetilde{\k}$ be a $\delta$-closure of $\k$. Lemma \ref{lem:extconst} assures that
$\L:=\operatorname{Frac}(\widetilde{\k}\otimes_\k \K)$ is a $\pd$-field extension of $\K$ such that $\L^\phi=\tilde{\k}$.
Let $\L\{y\}$ be the ring of $\delta$-polynomials in one variable over $\L$ endowed with the structure of $\L$-$\pd$-algebra induced by  
setting $\phi(y):= y+ b$. 
Without loss of generality,
we can assume that $R=\K\{v\}$. We identify $R$ with $\K\{y\}/\mathfrak{I}$ for some $\pd$-ideal $\mathfrak{I}$ of $\K\{y\}$.
Since $v$ is hyperalgebraic over $\K$, we have $\mathfrak{I} \neq \{0\}$.  Moreover,  we have $\mathfrak{I} \neq \K\{y\}$ because $R \neq \{0\}$. We claim that  $(\mathfrak{I}) \cap \K\{y\} =\mathfrak{I}$ where $(\mathfrak{I})$ denotes the $\pd$-ideal generated by $\mathfrak{I}$ in $\L\{y\}$. Indeed, choose   a $\K$-basis $(c_i)_{i \in I}$
of $\L$ with $c_{i_0}=1$ for some $i_{0} \in I$. Note that $(c_i)_{i \in I}$ is also a basis of the $\K\{y\}$-module $\L\{y\}$.  Then, $(\mathfrak{I})$ consists of the sums of the form $\sum a_i c_i$ with $a_i\in \mathfrak{I}$. It follows easily that $(\mathfrak{I}) \cap \K\{y\} =\mathfrak{I}$, as claimed. In particular,
$(\mathfrak{I})$ is a proper ideal of $\L\{y\}$ and, hence, is contained in some maximal $\pd$-ideal $\mathfrak{M}$ of $\L\{y\}$. The ring 
$S:=\L\{y\}/\mathfrak{M}$ is a PPV ring over $\L$ for $\phi(y)=y+ b$. The image $u$ of
$y$ in $S$ is hyperalgebraic over $\L$ (because $\mathfrak M \neq \{0\}$) and is a solution of $\phi(y)=y+ b$. By \cite[Proposition 3.1]{HS}, there exist a nonzero linear homogenous $\delta$-polynomial
$\mathcal{L}_0(y) \in \widetilde{\k}\{y\}$ and $g \in \L$ such that
\begin{equation}\label{eq:overdiffclosure}
\mathcal{L}_0(b)=\phi(g)-g.
\end{equation}
Let $(h_i)_{i \in I}$ be a $\k$-basis of $\K$.  Without loss of generality, we can assume that 
$$
\mathcal{L}_0(y)=\delta^{n+1}(y)+\sum_{i=0}^{n}  c_i \delta^{i}(y) \text{ and } 
g:=\frac{\sum_{i=1}^{r} a_i \otimes h_i}{\sum_{i=1}^{s} b_i \otimes h_i}
$$ 
where 
$a_i, b_i,c_i \in \widetilde{\k}$ and $b_{1}=1$. It is clear that the equation \eqref{eq:overdiffclosure} can be rewritten as an equation of the form  
$$
\sum_j P_j((a_i)_{i\in \{1,\dots,r\}},(b_i)_{i\in \{2,\dots,s\}},(c_i)_{i\in \{1,\dots,n\}}) \otimes h_j =0
$$
where the $P_j$ are polynomials with coefficients in $\k$. Thus, for all $j$,  
$$
P_j((a_i)_{i\in \{1,\dots,r\}},(b_i)_{i\in \{2,\dots,s\}},(c_i)_{i\in \{1,\dots,n\}})=0.
$$ 
Since $\k$ is algebraically closed, there exist $\alpha_i$, $\beta_i$, $\gamma_i \in \k$ such that, for all $j$, we have  
$$
P_j((\alpha_i)_{i\in \{1,\dots,r\}},(\beta_i)_{i\in \{2,\dots,s\}},(\gamma_i)_{i\in \{1,\dots,n\}}) =0.
$$
Set $\b_{1}:=1$. Then, we see that 
$$
{\mathcal{L}(y):=\delta^{n+1}(y)+\sum_{i=0}^{n}  \gamma_i \delta^{i}(y)}  \text{ and } 
f:=\frac{\sum_{i} \alpha_i \otimes h_i}{\sum_{i} \beta_i \otimes h_i}
$$ 
satisfy the conclusion of the first part of the proposition.

Conversely, if $R^\phi =\k$ and if there exist a nonzero linear homogeneous $\delta$-polynomial 
$\mathcal{L}(y) \in \k\{y\}$ and an element $f \in \K$ such that 
$\mathcal{L}(b)= \phi(f)-f$, then $\mathcal{L}(v)-f$ belongs to $R^\phi =\k$. Since $\mathcal{L}(y)$
is nonzero,  $v$ is differentially algebraic over $\K$.

The proof of the second statement is similar. It can also be deduced from the first statement by noticing that, if $\phi(v)=a v$ then $\phi(\frac{\delta v}{v}) =\frac{\delta v}{v}
+ \frac{\delta a}{a}$ and by using the fact that $v$ is hyperalgebraic over $\K$ if and only if the same holds for $\frac{\delta v}{v}$.

\end{proof}

\begin{rem}\label{rem:invertiblerk1}
In Proposition \ref{propo:hypertransrank1}, we require that $v$ is invertible in $R$. This assumption is automatically satisfied if 
we assume that $R$ is similar to a total PPV ring. More precisely, assume that $R= \oplus_{x \in \Z/s\Z} K_x$, where 
the $K_x$ are $\delta$-field extensions of $\K$, such that $\phi(K_x)=K_{x+\overline{1}}$. Then, any nonzero solution
$v \in R$ of $\phi(y)=ay$ for $a \in \K^\times$ is  invertible. Indeed,  $v=\sum_{x \in \Z/s\Z} v_x$ for some $v_x \in K_x$. Since $v \neq 0$, there exists $x_{0} \in \Z/s\Z$ such that $v_{x_{0}} \neq 0$.  From the equation $\phi(v)=av$, we get $\phi(v_{x_{0}-\overline{1}})=a v_{x_{0}}$. So, $v_{x_{0}-\overline{1}} \neq 0$. Iterating this argument, we see that, for all $x \in \Z/s\Z$, $v_{x} \neq 0$. Hence, $v$ is invertible in $R$. 
\end{rem}

\subsection{Isomonodromy and projective isomonodromy}

Let $\K$ be a $\pd$-field with $\k:=\K^\phi$ algebraically closed. Let  $\widetilde{\k}$ be a $\delta$-closure of $\k$. Let $\Const:=\widetilde{\k}^\delta$ be the (algebraically closed) field of constants of $\widetilde{\k}$. Lemma \ref{lem:extconst} ensures that  
$\widetilde{\k}\otimes_\k \K$ is an integral domain and that  
${\L:=\operatorname{Frac}(\widetilde{\k}\otimes_\k \K)}$ is a $(\phi,\delta)$-field extension of $\K$ such that $\L^{\phi}=\widetilde{\k}$. We let $\cQ_S$ be the total ring of quotients of a PPV ring  $S$ over $\L$ of the difference system 
$$
\phi(Y)=AY
$$ 
where $A \in \GL_n(\K)$.

\begin{propo}\label{propo:ppvcompintiso}
 The following statements are equivalent:
\begin{enumerate}
\item the group $\Galdelta(\cQ_S/\L)$ is conjugate to a subgroup of $\GL_n(\Const)$;
\item there exists $\widetilde{B} \in \L^{n \times n}$ such that
\begin{equation}\label{eq:compintconstgroupdiffclos}
\phi(\widetilde{B})=A\widetilde{B}A^{-1}+ \delta(A)A^{-1};
\end{equation}
\item there exists $B \in \K^{n \times n}$ such that
$$
\phi(B)=ABA^{-1}+ \delta(A)A^{-1}.
$$
\end{enumerate}
\end{propo}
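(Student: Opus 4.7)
The plan is to prove the cyclic chain $(3) \Rightarrow (2) \Rightarrow (1) \Rightarrow (3)$. The first arrow is trivial via the inclusion $\K \hookrightarrow \L$.

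For $(2) \Rightarrow (1)$, I fix a fundamental matrix $U \in \GL_n(S)$ with $\phi(U) = AU$ and use the identity
\[
\phi(\delta(U) U^{-1}) = A (\delta(U) U^{-1}) A^{-1} + \delta(A) A^{-1},
\]
obtained by applying $\delta$ to $\phi(U)=AU$ and using $\phi\delta = \delta\phi$. Hence $N := \delta(U) U^{-1} - \widetilde{B}$ satisfies $\phi(N) = A N A^{-1}$, so $M := U^{-1} N U$ is $\phi$-invariant and lies in $\widetilde{\k}^{n \times n}$ (since $\cQ_S^\phi = \widetilde{\k}$). Because $\widetilde{\k}$ is differentially closed, I can pick $Q \in \GL_n(\widetilde{\k})$ with $\delta(Q) Q^{-1} = -M$; then $U' := U Q$ is still a fundamental matrix (as $\phi$ fixes $Q$) and a direct computation gives $\delta(U')(U')^{-1} = \widetilde{B}$. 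For any $\sigma \in \Galdelta(\cQ_S/\L)$, acting via $\sigma(U') = U' C(\sigma)$, applying $\sigma$ to this identity and expanding by Leibniz forces $\delta(C(\sigma)) = 0$, so $C(\sigma) \in \GL_n(\Const)$, which is (1).

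For $(1) \Rightarrow (3)$, by choosing $U$ that realizes the conjugation of (1), the same identity read in reverse shows that $\widetilde{B} := \delta(U) U^{-1}$ is $\Galdelta$-fixed, hence lies in $\L^{n \times n}$ by the parametrized Galois correspondence (Proposition \ref{propo:ppvcorres}); this already yields (2). To upgrade to (3), I would descend from $\L$ to $\K$ using $\L = \operatorname{Frac}(\widetilde{\k} \otimes_\k \K)$. Fixing a $\k$-basis $(c_i)_{i \in I}$ of $\widetilde{\k}$ with $c_{i_0} = 1$, and granting that $\widetilde{B}$ can be modified to lie in $R_0^{n \times n}$ where $R_0 := \widetilde{\k} \otimes_\k \K$ (no denominators), the expansion $\widetilde{B} = \sum_i c_i B_i$ with $B_i \in \K^{n \times n}$ (finitely many nonzero) combined with the $\phi$-invariance of the $c_i$ and their $\K$-linear independence in $R_0$ lets me equate $c_i$-components in the integrability equation; from the $c_{i_0}$-component one reads off $\phi(B_{i_0}) = A B_{i_0} A^{-1} + \delta(A) A^{-1}$, so $B := B_{i_0} \in \K^{n \times n}$ witnesses (3).

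The hard part will be the denominator-clearing step, i.e., arranging $\widetilde{B}$ to sit in $R_0^{n \times n}$ rather than merely in $\L^{n \times n}$. My approach is to exploit the remaining freedom $U \mapsto U Q$ with $Q \in \GL_n(\widetilde{\k})$ (which does not disturb the Galois-theoretic framework and translates, on the $\widetilde{B}$-side, into translation by the homogeneous solution space $\{M \in \L^{n \times n} : \phi(M) = A M A^{-1}\}$) and to combine this with the classical difference Galois theory of this homogeneous system over $\K$, which identifies its $\L$-solutions with the $\widetilde{\k}$-span of its $\K$-solutions inside $R_0^{n \times n}$; granting this, the affine set of $\L$-solutions of the inhomogeneous equation contains a representative already in $R_0^{n \times n}$, completing the descent.
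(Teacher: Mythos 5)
Your chain $(3)\Rightarrow(2)\Rightarrow(1)\Rightarrow(2)$ is sound: the direct proof of the equivalence of (1) and (2), via the identity $\phi(\delta(U)U^{-1})=A(\delta(U)U^{-1})A^{-1}+\delta(A)A^{-1}$ and the gauge change $U\mapsto UQ$ with $\delta(Q)Q^{-1}=-M$, is precisely the content of the result the paper cites for this equivalence (\cite[Proposition 2.9]{HS}), so that part is correct, merely more self-contained than the paper's one-line citation. Your component-equating argument at the end is also correct \emph{once} one knows that $\widetilde{B}$ lies in $R_0^{n\times n}$ with $R_0:=\widetilde{\k}\otimes_\k\K$.

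The genuine gap is the denominator-clearing step in $(2)\Rightarrow(3)$, and your proposed mechanism for it is circular. You want to move $\widetilde{B}$ into $R_0^{n\times n}$ by translating it by elements of the homogeneous solution space $V_{\L}=\{M\in\L^{n\times n}\,:\,\phi(M)=AMA^{-1}\}$, while simultaneously granting that $V_{\L}$ is the $\widetilde{\k}$-span of the $\K$-rational solutions, hence is itself contained in $R_0^{n\times n}$. But then $\widetilde{B}+V_{\L}$ meets $R_0^{n\times n}$ if and only if $\widetilde{B}\in R_0^{n\times n}$ to begin with: translating by denominator-free elements can never remove denominators, so the assertion that the affine set of $\L$-solutions contains a representative in $R_0^{n\times n}$ is exactly the statement you still have to prove. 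Two ways to close this. (a) Directly: $\mathfrak{a}:=\{v\in R_0\,:\,v\widetilde{B}\in R_0^{n\times n}\}$ is a nonzero ideal of $R_0$, and for $v\in\mathfrak{a}$ one has $\phi(v)\widetilde{B}=A^{-1}\bigl(\phi(v\widetilde{B})-\phi(v)\delta(A)A^{-1}\bigr)A\in R_0^{n\times n}$ because $A\in\GL_n(\K)$ and $\phi$ preserves $R_0$; hence $\phi(\mathfrak{a})\subseteq\mathfrak{a}$, and the argument in the proof of Lemma \ref{lem:extconst} shows every nonzero $\phi$-stable ideal of $R_0$ contains a unit, so $\mathfrak{a}=R_0$ and $\widetilde{B}\in R_0^{n\times n}$ (incidentally, this same argument is what justifies your ``granted'' description of $V_{\L}$). (b) The paper's route: write $\widetilde{B}=N/D$ with $N\in R_0^{n\times n}$, $D\in R_0$, expand $N$ and $D$ in a $\k$-basis of $\K$ with unknown coefficients in $\widetilde{\k}$ (normalizing one coefficient of $D$ to $1$), turn the cleared equation $D\,\phi(N)A=\phi(D)AN+\phi(D)D\,\delta(A)$ into a system of polynomial equations over $\k$, and use that $\k$ is algebraically closed to specialize the coefficients into $\k$; this is the descent argument of Proposition \ref{propo:hypertransrank1} that the paper explicitly invokes, and it handles the denominators by treating them as unknowns rather than by trying to eliminate them first.
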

\begin{proof}
The equivalence between (1) and (2) is \cite[Proposition 2.9]{HS}. In order to complete the proof of the proposition, it remains to prove that, if the equation \eqref{eq:compintconstgroupdiffclos} has a solution $\widetilde{B}$ in $\L^{n \times n}$, then it has a solution in $\K^{n \times n}$. This follows from an argument similar to the descent argument used in the proof of Proposition \ref{propo:hypertransrank1}.
\end{proof}

 We now consider a ``projective isomonodromic'' situation, in the spirit of \cite{MitschSingProjiso}. Let $U \in \GL_n(S)$ be a fundamental 
matrix of solutions of $\phi(Y)=AY$ and let $d:=\det(U) \in S^{\times}$.

\begin{propo}\label{propo:projisomonoundgencondition}
Assume that the difference Galois group of $\phi(Y)=AY$ over the $\phi$-field $\K$ contains $\SL_{n}(\k)$ and that the parametrized difference Galois group of $\phi (y)= \det(A) y$ over the $\pd$-field $\L$ is included in $\Const^{\times}$.
Then, we have the following alternative:
\begin{enumerate}
\item $\Galdelta(\cQ_S/\L)$ is conjugate to a subgroup of $\GL_{n}(\Const)$ that contains $\SL_{n}(\Const)$; 
\item $\Galdelta(\cQ_S/\L)$ is equal to a subgroup of $\Const^{\times} \SL_{n}(\widetilde{\k})$ that contains $\SL_n(\widetilde{\k})$.
\end{enumerate}
Moreover, the first case holds if and only if there exists $B \in \K^{n \times n}$ such that 
\begin{equation}\label{gen integ hypertranalgclos}
\phi(B)=ABA^{-1}+\delta(A)A^{-1}.
\end{equation}
\end{propo}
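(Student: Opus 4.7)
The plan is to analyze $G^{\delta}:=\Galdelta(\cQ_S/\L)$ by sandwiching it between $\SL_n(\widetilde{\k})$ (or $\SL_n(\Const)$) and $\Const^{\times}\SL_n(\widetilde{\k})$, and then to apply the classification of Zariski-dense $\delta$-subgroups of $\SL_n$ from Proposition~\ref{propo3}. By Proposition~\ref{propo:zarclosurePPvgaloisgroup}, the Zariski closure $\overline{G^{\delta}}$ is the classical difference Galois group of $\phi(Y)=AY$ over $\L$. Since $\L$ is obtained from $\K$ purely by extending the constants from $\k$ to $\widetilde{\k}$ (Lemma~\ref{lem:extconst}) and $\widetilde{\k}/\k$ is a regular extension, a standard base-change argument (in the spirit of Corollary~\ref{coro contient SL}, ensuring the PV ring stays $\phi$-simple after extending scalars) shows that the hypothesis $\SL_n(\k)\subset\Gal(\cQ_{R}/\K)$ propagates to $\SL_n(\widetilde{\k})\subset\overline{G^{\delta}}$.

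Next, by the second hypothesis applied to $d:=\det(U)$, the image $\det(G^{\delta})$ lies in $\Const^{\times}$. Because $\Const$ is algebraically closed, the $n$-th power map is surjective on $\Const^{\times}$, so for any $g\in G^{\delta}$ one may write $\det(g)=c^{n}$ with $c\in\Const^{\times}$, giving $c^{-1}g\in\SL_n(\widetilde{\k})$ and hence
$$
G^{\delta}\subseteq\Const^{\times}\,\SL_n(\widetilde{\k}).
$$
I then consider the derived subgroup $H:=[G^{\delta},G^{\delta}]$, which is $\delta$-closed, normal in $G^{\delta}$, and contained in $[\GL_n(\widetilde{\k}),\GL_n(\widetilde{\k})]=\SL_n(\widetilde{\k})$. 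Its Zariski closure contains $[\overline{G^{\delta}},\overline{G^{\delta}}]\supseteq[\SL_n(\widetilde{\k}),\SL_n(\widetilde{\k})]=\SL_n(\widetilde{\k})$, using that $\SL_n$ is perfect for $n\geq 2$ (the case $n=1$ being trivial). Thus $H$ is Zariski-dense in $\SL_n(\widetilde{\k})$, and Proposition~\ref{propo3} gives the dichotomy: either $H$ is conjugate to $\SL_n(\Const)$, or $H=\SL_n(\widetilde{\k})$.

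In the first alternative, write $H=P\,\SL_n(\Const)\,P^{-1}$ with $P\in\GL_n(\widetilde{\k})$. Normality of $H$ in $G^{\delta}$ implies that $P^{-1}G^{\delta}P$ normalizes $\SL_n(\Const)$, hence lies in $\widetilde{\k}^{\times}\SL_n(\Const)$ by Lemma~\ref{lem normalisateur}. Writing $P^{-1}gP=\lambda s$ with $\lambda\in\widetilde{\k}^{\times}$, $s\in\SL_n(\Const)$, the condition $\lambda^{n}=\det(g)\in\Const^{\times}$ forces $\lambda\in\Const^{\times}$, so $P^{-1}G^{\delta}P\subseteq\GL_n(\Const)$; since it contains $P^{-1}HP=\SL_n(\Const)$, this gives conclusion~(1). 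In the second alternative, $G^{\delta}\supseteq\SL_n(\widetilde{\k})$, and combining with the inclusion $G^{\delta}\subseteq\Const^{\times}\SL_n(\widetilde{\k})$ obtained above yields conclusion~(2). The ``moreover'' statement then follows immediately from Proposition~\ref{propo:ppvcompintiso}: conclusion~(1) is exactly the assertion that $G^{\delta}$ is conjugate into $\GL_n(\Const)$, which by that proposition is equivalent to the solvability of $\phi(B)=ABA^{-1}+\delta(A)A^{-1}$ in $\K^{n\times n}$.

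The most delicate point will be the base-change argument underlying the first paragraph: one must verify carefully that extending the constants from $\k$ to $\widetilde{\k}$ preserves the classical difference Galois group up to base change to $\widetilde{\k}$, so that the inclusion $\SL_n(\k)\subset\Gal(\cQ_{R}/\K)$ survives as $\SL_n(\widetilde{\k})\subset\overline{G^{\delta}}$. Once this is in place, the rest of the proof is a clean combination of Proposition~\ref{propo3}, Lemma~\ref{lem normalisateur}, and Proposition~\ref{propo:ppvcompintiso}.
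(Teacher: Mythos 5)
Your proposal follows essentially the same route as the paper: propagate $\SL_n$ to the base field $\L$, pass to the derived subgroup, apply the dichotomy of Proposition~\ref{propo3} for Zariski-dense $\delta$-subgroups of $\SL_n$, and finish with the normalizer computation of Lemma~\ref{lem normalisateur} and the determinant constraint, the ``moreover'' part being Proposition~\ref{propo:ppvcompintiso}. The paper handles your flagged ``delicate'' base-change step by citing \cite[Corollary~2.5]{CHS} (difference Galois groups are stable under extension of the algebraically closed field of constants), so that step is fine; the only ordering difference is that you impose the determinant constraint up front to get $G^{\delta}\subseteq\Const^{\times}\SL_n(\widetilde{\k})$, while the paper applies it after the normalizer step, which is immaterial.

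One point needs correcting: you assert that $H:=[G^{\delta},G^{\delta}]$ is $\delta$-closed, but for linear \emph{differential} algebraic groups the derived subgroup is in general not Kolchin-closed (the paper warns about exactly this in Section~\ref{subsubsec:preliminarydiffderived group}, and works with the Kolchin closure $G^{\derKol}$ of the derived subgroup instead). The repair is routine: replace $H$ by its Kolchin closure, which is still normal in $G^{\delta}$, still contained in the Kolchin-closed group $\SL_n(\widetilde{\k})$, and still Zariski-dense in $\SL_n(\widetilde{\k})$, so Proposition~\ref{propo3} applies to it and the rest of your argument goes through unchanged.
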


\begin{proof}
Let $R$ be the $\L$-$\phi$-algebra generated by the entries of $U$ and by $\det(U)^{-1}$; this is a PV ring for $\phi(Y)=AY$ over the $\phi$-field $\L$.
Using \cite[Corollary 2.5]{CHS}, we see that the hypothesis that the difference Galois group of $\phi(Y)=AY$ over the $\phi$-field $\K$ contains $\SL_{n}(\k)$ implies that 
$\Gal(\cQ_R/\L)$ contains $\SL_n(\widetilde{\k})$. So, $\Gal(\cQ_R/\L)^{der}=\SL_n(\widetilde{\k})$. Since $\Galdelta(\cQ_S/\L)$ is Zariski-dense in $\Gal(\cQ_R/\L)$, we have that $\Galdelta(\cQ_S/\L)^{\derKol}$ (this is the Kolchin-closure of the derived subgroup of $\Galdelta(\cQ_S/\L)$; see Section~\ref{subsubsec:preliminarydiffderived group}) is Zariski-dense in $\Gal(\cQ_R/\L)^{der}=\SL_n(\widetilde{\k})$. By Proposition~\ref{propo3}, $\Galdelta(\cQ_S/\L)^{\derKol}$ is  either conjugate to $\SL_{n}(\Const)$ or equal to $\SL_{n}(\widetilde{\k})$. Since $\Galdelta(\cQ_S/\L)^{\derKol}$ is a normal subgroup of $\Galdelta(\cQ_S/\L)$, Lemma \ref{lem normalisateur} ensures that $\Galdelta(\cQ_S/\L)$ is either conjugate to a subgroup of $\widetilde{\k}^{\times} \SL_{n}(\Const)$ containing $\SL_{n}(\Const)$ or is equal to a subgroup of $\GL_{n}(\widetilde{\k})$ containing $\SL_{n}(\widetilde{\k})$. But, the parametrized difference Galois group of $\phi (y)= \det(A) y$ over $\L$, which can be identified with $\det(\Galdelta(\cQ_S/\L))$, is contained in $\Const^{\times}$. Therefore, $\Galdelta(\cQ_S/\L)$ is either contained in $\Const^{\times} \SL_{n}(\Const)=\GL_{n}(\Const)$ or in $\Const^{\times} \SL_{n}(\widetilde{\k})$. Whence the first part of the proposition.

The second part of the proposition follows from Proposition \ref{propo:ppvcompintiso}. 
\end{proof}

\begin{propo}\label{propo:projisomonodgencondition}
Assume that the difference Galois group of $\phi(Y)=AY$ over the $\phi$-field $\K$ contains $\SL_{n}(\k)$ and that the parametrized difference Galois group of $\phi (y)= \det(A) y$ over the $\pd$-field $\L$ is $\widetilde{\k}^{\times}$.
Then, we have the following alternative:
\begin{enumerate}
\item $\Galdelta(\cQ_S/\L)$ is conjugate to $\widetilde{\k}^{\times}\SL_{n}(\Const)$; 
\item $\Galdelta(\cQ_S/\L)$ is equal to $\GL_{n}(\widetilde{\k})$.
\end{enumerate}
Moreover, the first case holds if and only if there exists $B \in \K^{n \times n}$ such that 
\begin{equation}\label{gen integ hypertranalgclos}
\phi(B)=ABA^{-1}+\delta(A)A^{-1}-\frac{1}{n}\delta(\det(A))\det(A)^{-1}I_{n}.
\end{equation}
\end{propo}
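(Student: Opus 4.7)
The proof parallels that of Proposition \ref{propo:projisomonoundgencondition}, with the stronger determinantal hypothesis (equality with $\widetilde{\k}^{\times}$ rather than inclusion in $\Const^{\times}$) upgrading inclusions to equalities in the conclusion, and with the integrability equation corrected by a determinantal term.

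The first steps follow those of the previous proposition verbatim. By \cite[Corollary 2.5]{CHS} and Proposition \ref{propo:zarclosurePPvgaloisgroup}, $\Galdelta(\cQ_S/\L)^{\derKol}$ is Zariski-dense in $\SL_n(\widetilde{\k})$, so by Proposition \ref{propo3} it is either conjugate to $\SL_n(\Const)$ or equal to $\SL_n(\widetilde{\k})$; Lemma \ref{lem normalisateur} then places $\Galdelta(\cQ_S/\L)$ (up to conjugation) in $\widetilde{\k}^{\times} \SL_n(\Const)$ or between $\SL_n(\widetilde{\k})$ and $\GL_n(\widetilde{\k})$. Identifying the parametrized Galois group of $\phi(y) = \det(A) y$ with $\det(\Galdelta(\cQ_S/\L))$, the assumption that this group equals $\widetilde{\k}^{\times}$ makes the determinant surjective, upgrading each inclusion to an equality: either $\Galdelta(\cQ_S/\L)$ is conjugate to $\widetilde{\k}^{\times} \SL_n(\Const)$, or it equals $\GL_n(\widetilde{\k})$.

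For the integrability criterion, the key object is the trace-corrected matrix
\[
\widetilde{B} := \delta(U) U^{-1} - \frac{1}{n} \frac{\delta(\det U)}{\det U} I_n \in \cQ_S^{n \times n}.
\]
A direct calculation (using $\phi(\det U) = \det(A) \det(U)$ and $\phi(\delta(U) U^{-1}) = \delta(A) A^{-1} + A \delta(U) U^{-1} A^{-1}$) yields $\phi(\widetilde{B}) = A \widetilde{B} A^{-1} + \delta(A) A^{-1} - \frac{1}{n} \delta(\det A)(\det A)^{-1} I_n$ in $\cQ_S^{n \times n}$. In the first alternative, after conjugation we may write $C(\sigma) = \lambda M$ for $\sigma \in \Galdelta(\cQ_S/\L)$, with $\lambda \in \widetilde{\k}^{\times}$ and $M \in \SL_n(\Const)$; since $\delta(M) = 0$ one has $\delta(C) C^{-1} = (\delta(\lambda)/\lambda) I_n$ and $\delta(\det C)/\det(C) = n \delta(\lambda)/\lambda$, and these two scalar terms cancel in $\sigma(\widetilde{B})$, giving $\sigma(\widetilde{B}) = \widetilde{B}$. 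The parametrized Galois correspondence (Proposition \ref{propo:ppvcorres}) then places $\widetilde{B}$ in $\L^{n \times n}$, and the $\k$-basis descent of Proposition \ref{propo:hypertransrank1} produces a solution $B \in \K^{n \times n}$.

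The main obstacle lies in the converse. Given $B \in \K^{n \times n}$ satisfying the equation, $E := \widetilde{B} - B$ satisfies $\phi(E) = A E A^{-1}$, so $N := U^{-1} E U$ is $\phi$-invariant and hence lies in $\widetilde{\k}^{n \times n}$. Evaluating $\sigma(N) = N$ against the direct expression for $\sigma(\widetilde{B})$ yields, for every $\sigma \in \Galdelta(\cQ_S/\L)$, the relation
\[
\delta(C(\sigma)) = C(\sigma) N - N C(\sigma) + \frac{1}{n} \frac{\delta(\det C(\sigma))}{\det C(\sigma)} C(\sigma).
\]
Since $\widetilde{\k}$ is differentially closed, the linear $\delta$-system $\delta(P) = -N P$ admits a fundamental solution $P \in \GL_n(\widetilde{\k})$; replacing $U$ by $U P$ (so that $C(\sigma)$ is replaced by $\widetilde{C}(\sigma) := P^{-1} C(\sigma) P$) transforms the above relation into $\delta(\widetilde{C}) = \mu \widetilde{C}$ with $\mu := \frac{1}{n} \delta(\det C)/\det(C)$. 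Integrating this scalar logarithmic derivative equation and using that $\Const = \widetilde{\k}^{\delta}$ is algebraically closed (so that $n$-th roots in $\Const^{\times}$ remain in $\Const$) yields $\widetilde{C}(\sigma) \in \widetilde{\k}^{\times} \SL_n(\Const)$ for all $\sigma$, placing $\Galdelta(\cQ_S/\L)$ in the first alternative after conjugation by $P$.
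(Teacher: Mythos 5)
Your proposal is correct and follows essentially the same route as the paper: the same dichotomy via Cassidy's theorem and the normalizer lemma upgraded to equalities by the determinant hypothesis, the same trace-corrected matrix $\delta(U)U^{-1}-\tfrac{1}{n}\delta(\det U)(\det U)^{-1}I_n$ fixed by the Galois group in the first case, and the same descent from $\L$ to $\K$. Your converse, phrased as a gauge transformation by a fundamental solution $P$ of $\delta(P)=-NP$ with $N=U^{-1}(\widetilde{B}-B)U$, is literally the paper's construction ($N$ and $P$ coincide with the paper's $C$ and $D$), only repackaged as "the difference of two solutions of the inhomogeneous equation is a solution of the homogeneous one."
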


\begin{proof}
Arguing as for the proof of Proposition~\ref{propo:projisomonoundgencondition}, we see that $\Galdelta(\cQ_S/\L)$ is either conjugate to a subgroup of $\widetilde{\k}^{\times} \SL_{n}(\Const)$ containing $\SL_{n}(\Const)$ or equal to a subgroup of $\GL_{n}(\widetilde{\k})$ containing $\SL_{n}(\widetilde{\k})$. Now, the first part of the proposition follows from the fact that the parametrized difference Galois group of $\phi (y)= \det(A) y$ over $\L$, which can be identified with $\det(\Galdelta(\cQ_S/\L))$, is equal to $\widetilde{\k}^{\times}$.

We shall now prove that the first case holds if and only if there exists $B \in  \L^{n \times n}$ such that 
\begin{equation}\label{gen integ hypertrandiffclos}
\phi(B)=ABA^{-1}+\delta(A)A^{-1}-\frac{1}{n}\delta(\det(A))\det(A)^{-1}I_{n}.
\end{equation}

Let us first assume that $\Galdelta(\cQ_S/\L)$ is conjugate to $\widetilde{\k}^{\times}\SL_{n}(\Const)$. So, there exists a fundamental matrix of solutions $U \in \GL_{n}(S)$ of $\phi(Y)=AY$ such that, for all $\sigma \in \Galdelta(\cQ_S/\L)$, there exist $\rho_{\sigma} \in \widetilde{\k}^{\times}$ and $M_{\sigma} \in \SL_{n}(\Const)$ such that 
$
\sigma(U)=U\rho_{\sigma}M_{\sigma}.
$ 
Note that  
$
\sigma(d)=d\rho_{\sigma}^{n}.
$
Easy calculations show that the matrix  
$$
B:=\delta(U)U^{-1} - \frac{1}{n} \delta(d)d^{-1}I_{n} \in S^{n \times n}
$$ 
is left invariant by the action of $\Galdelta(\cQ_S/\L)$, and, hence, belongs to $\L^{n \times n}$ in virtue of Proposition \ref{propo:ppvcorres}, and that $B$ satisfies equation \eqref{gen integ hypertrandiffclos}.

Conversely, assume that there exists $B \in \L^{n \times n} $ satisfying equation \eqref{gen integ hypertrandiffclos}.
Consider 
$$
B_{1}=B+\frac{1}{n} \delta(d)d^{-1} I_{n} \in S^{n\times n}.
$$ 
Note that
$$
\phi(B_{1})=AB_{1}A^{-1}+\delta(A)A^{-1}.
$$
Let $U \in \GL_{n}(S)$ be a fundamental matrix of solutions of $\phi Y=AY$. We have 
$
\phi(\delta(U)-B_{1}U)
=A(\delta(U)-B_{1}U).
$
So, there exists $C \in  \widetilde{\k}^{n \times n}$ such that ${\delta(U)-B_{1}U=UC}$. Since $\widetilde{\k}$ is differentially closed, we can find $D \in \GL_{n}(\widetilde{\k})$ such that $\delta(D)+CD=0$. Then, $V:=UD$ is a fundamental matrix of solutions of $\phi Y=AY$ such that 
$
\delta(V)= 
%(B_{1}U+UC)D-UCD=
B_{1}V.
$
Consider $\sigma \in \Galdelta(\cQ_S/\L)$ and let $M_{\sigma} \in \GL_{n}(\widetilde \k)$ be such that $\sigma(V)=VM_{\sigma}$; note that $\sigma(d)=d \rho_{\sigma}$ where $\rho_{\sigma}=\det(M_{\sigma})$. On the one hand, we have $\sigma(\delta(V))=\sigma(B_{1}V)=(B_{1}+\frac{1}{n}\delta(\rho_{\sigma})\rho_{\sigma}^{-1} I_{n}) VM_{\sigma}$. On the other hand, we have $\sigma(\delta(V))=\delta(\sigma(V))=\delta(VM_{\sigma})=B_{1}VM_{\sigma}+V\delta(M_{\sigma})$. So, $\frac{1}{n}\delta(\rho_{\sigma})\rho_{\sigma}^{-1}M_{\sigma}=\delta(M_{\sigma})$. So, the entries of $M_{\sigma}=(m_{i,j})_{1 \leq i,j \leq n}$ are solutions of $\delta(y)=\frac{1}{n}\delta(\rho_{\sigma})\rho_{\sigma}^{-1} y$. Let $i_{0},j_{0}$ be such that $m_{i_{0},j_{0}} \neq 0$. Then, $M_{\sigma}=m_{i_{0},j_{0}} M'$ with $M'=\frac{1}{m_{i_{0},j_{0}}} M_{\sigma} \in \GL_{n}(\widetilde \k^{\delta})=\GL_{n}(\Const)$, whence the desired result.

To conclude the proof, we have to show that if \eqref{gen integ hypertrandiffclos} has a solution $B$ in $\L^{n\times n}$ then it has a solution in $\K^{n \times n}$. This can be proved by using an argument similar to the descent argument used in the proof of Proposition \ref{propo:hypertransrank1}. 
\end{proof}

%%%%%%%%%%%%%%%%%%%%%%%%%%%%%%%%%%%%%%%%%%%%%%%%%%%%%%%%%%%%%%%%%%%%%%
%%%%%%%%%%%%%%%%%%%%%%%%%%%%%%%%%%%%%%%%%%%%%%%%%%%%%%%%%%%%%%%%%%%%%%%
%%%
\section{Hypertranscendence of solutions of Mahler equations}\label{sec hyptr}
%%%%%%%%%%%%%%%%%%%%%%%%%%%%%%%%%%%%%%%%%%%%%%%%%%%%%%%%%%%%
%%%%%%%%%%%%%%%%%%%%%%%%%%%%%%%%%%%%%%%%%%%%

Now, we focus our attention on Mahler equations. 

We use the notation of Section \ref{mahler as diff eq}: $p\geq 2$ is an integer, $\K:=\cup_{j \geq 1} \C\left(z^{1/j}\right)$ and $\K':=\K(\log(z))$. 
We endow $\K$ with the structure of $\phi$-field given by $\phi (f(z)):=f(z^{p})$. We endow $\K':=\K(\log(z))$ with the structure of $\phi$-field given by $\phi (f(z, \log(z))):=f(z^{p}, p\log(z))$. We have $\K^{\phi}=\K'^\phi=\C$. 

The derivation 
$$
\delta:=z \log(z) \frac{d}{dz}
$$ 
gives a structure of $\pd$-field over $\K'$ (so, $\delta$ commutes with $\phi$, and this is the reason why we work with $\delta$ instead of a simplest derivation). We also set 
$$
\vartheta:= z \frac{d}{dz}.
$$ 

We let $\widetilde{\C}$ denote a differential closure of $(\C,\delta)$.  We have $\widetilde{\C}^\delta=\C$.
As in Lemma \ref{lem:extconst},  we consider $\L= \operatorname{Frac}(\widetilde{\C} \otimes_\C \K')=\cup_{j \geq 1}\widetilde{\C}\left(z^{1/j}\right)(\log(z))$, which is a $\pd$-field extension of $\K'$ such that
   $\L^\phi=\widetilde{\C}$.

%%%%%%%%%%%%%%%%%%%%%%%%%%%%%%%%%%%%%%%%%%%%%%%%%%%%%%%%%%%%%%%%%%%%%%%%%%%%%%%%%%%%%%%
%%%%%%%%%%%%%%%%%%%%%%%%%%%%%%%%%%%%%%%%%%%%%%%%%%%%%%%%%%%%%%%%%%%%%%%%%%%%%%%%%%%%%%

\subsection{Homogeneous  Mahler  equations of order one}\label{sec3}
%%%%%%%%%%%%%%%%%%%%%%%%%%%%%%%%%%%%%%%%%%%%%%%%%%%%%%%%%%%%%%%%%%%%%%%
%%%%%%%%%%%%%%%%%%%%%%%%%%%%%%%%%%%%%%%%%%%%%%%%%%%%%%%%%%%%%%%%%%%%%%

In this section, we consider the difference equation of order one 
\begin{equation}\label{eq ordre un}
\phi(y)=ay
\end{equation}
where $a \in \C(z)^{\times}$. 
We let $S$ be a PPV ring over $\L$ for the equation (\ref{eq ordre un}).

Since $S$ is an $\L$-$\pd$-algebra, it can be seen as a $\C(z)$-$\vartheta$-algebra ({\it i.e.}, over the differential field $(\C(z),\vartheta)$) by letting $\vartheta$ acts as $\frac{1}{\log(z)} \delta$.

\begin{propo}\label{propo:mahlerrang1homoge}
Let  $R$ be a $\K'$-$\pd$-algebra such that $R^\phi=\C$. 
 Let $u$ be an invertible element of $R$ such that $\phi(u)=au$. The following statements are equivalent:
 
\begin{enumerate}
\item  $u$ is hyperalgebraic over $(\C(z),\vartheta)$\footnote{Of course, $u$ is hyperalgebraic over $(\C(z),\vartheta)$ if and only if $u$ is hyperalgebraic over $(\C(z),d/dz)$.};
\item $\Galdelta(\cQ_S/\L)$ is conjugate to a subgroup of $\C^{\times}$;
\item there exists $d\in \C(z)$ such that 
$\vartheta(a)=a(p \phi(d)-d)$;

\item there exist $ c \in \C^{\times}$, $m \in \Z$ and $f \in \C(z)^{\times}$ such that $a=cz^{m}\frac{\phi(f)}{f}$.

\end{enumerate}
\end{propo}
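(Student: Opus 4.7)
I plan to establish the cycle $(4)\Rightarrow(1)\Rightarrow(3)\Rightarrow(4)$ together with $(2)\Leftrightarrow(3)$, with (3) as the pivot. For $(3)\Leftrightarrow(2)$, apply Proposition~\ref{propo:ppvcompintiso} with base $\K'$ (which is a $\pd$-field with $\K'^{\phi}=\C$ algebraically closed) and $n=1$. The direction $(3)\Rightarrow(2)$ is immediate: set $B:=\log(z)\cdot d\in\K'\subset\L$ and check $\phi(B)-B=\log(z)(p\phi(d)-d)=\delta(a)/a$. For $(2)\Rightarrow(3)$, the proposition yields $B\in\K'$ with $\phi(B)-B=\delta(a)/a$; write $B=\sum_k B_k(\log z)^k$ with $B_k\in\K$. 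Matching coefficients of powers of $\log z$, together with the fact that $\phi(y)=y/p^k$ has no nonzero solution in $\K$ for $k\geq 2$ (by a valuation-at-$0$ argument), forces $B=B_0+B_1\log z$ with $B_0\in\C$ and $p\phi(B_1)-B_1=\vartheta(a)/a\in\C(z)$. The remaining descent from $B_1\in\K$ to some $d\in\C(z)$ is carried out below.

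The implication $(4)\Rightarrow(1)$ is direct substitution. Writing $u=f\,z^{m/(p-1)}w$ in $\phi(u)=au$ with $a=cz^m\phi(f)/f$ reduces to $\phi(w)=cw$ with $c\in\C^{\times}$. Then $\delta(w)/w$ is $\phi$-invariant, so lies in $S^{\phi}=\widetilde{\C}$; hence $w\,\delta^2(w)-\delta(w)^2=0$, a $\delta$-polynomial relation with coefficients in $\Q\subset\C(z)$, and $u$ is hyperalgebraic over $\C(z)$.

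The main implication is $(1)\Rightarrow(3)$. Proposition~\ref{propo:hypertransrank1} applied with base $\K'$ yields a nonzero $\mathcal{L}=\sum_{i=0}^{n}c_i\delta^i\in\C\{y\}$ and $f\in\K'$ with $\mathcal{L}(\delta(a)/a)=\phi(f)-f$. Setting $g:=\vartheta(a)/a$ and using $\delta(\log z)=\log z$, one derives the identity
\[
\delta^i(\log(z)\cdot g)=\sum_{k=1}^{i+1}S(i,k)(\log z)^k\,\vartheta^{k-1}(g),\qquad S(i,1)=1,
\]
where the $S(i,k)$ are Stirling-type numbers. Writing $f=\sum_k f_k(\log z)^k$ with $f_k\in\K$ and matching the coefficient of $\log z$ gives $\bigl(\sum_i c_i\bigr)g=p\phi(f_1)-f_1$. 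When $\sum c_i\neq 0$, set $d:=f_1/\sum c_i\in\K$. When $\sum c_i=0$, $\mathcal{L}$ annihilates $\log z$, so factor $\mathcal{L}=\mathcal{L}'\circ(\delta-1)$ and iterate; using that $\vartheta^n(g)\neq 0$ for nonconstant $g\in\C(z)$, the iteration terminates either by reducing to the former case or by concluding $a\in\C^{\times}$, in which (3) is immediate with $d=0$. The descent from $d\in\K$ to $\C(z)$ proceeds as follows: writing $d\in\C(z^{1/N})$ as $\sum_{j=0}^{N-1}h_j z^{j/N}$ with $h_j\in\C(z)$, the vanishing of the $z^{r/N}$-coefficient of $p\phi(d)-d\in\C(z)$ for each $r\neq 0$ forces $h_r=0$: directly when $r$ lies outside the image of $j\mapsto pj\bmod N$, and then cascading through the remaining functional equations of the form $h(z)=cz^Q h(z^{p^k})$, whose nonzero rational solutions would require the typically non-integer valuation $v_0(h)=Q/(1-p^k)$.

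Finally, for $(3)\Rightarrow(4)$: given $d\in\C(z)$ with $p\phi(d)-d=\vartheta(a)/a$, the goal is to find $m\in\Z$ and $f\in\C(z)^{\times}$ with $d-m/(p-1)=\vartheta(f)/f$; then $a':=z^m\phi(f)/f$ satisfies $\vartheta(a')/a'=\vartheta(a)/a$, forcing $a/a'\in\C^{\times}$ and yielding (4). Existence of $m,f$ amounts to showing that a suitable integer shift of $d$ is a logarithmic $\vartheta$-derivative in $\C(z)$: $d$ must have only simple poles with residues in $\beta\Z$ at each pole $\beta\in\C^{\times}$ and integer values at $0,\infty$. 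These properties are extracted from $p\phi(d)-d=\vartheta(a)/a$ by pole and valuation analysis on both sides, using that the right-hand side is itself a logarithmic derivative. The main obstacles are the degenerate case $\sum c_i=0$ in $(1)\Rightarrow(3)$, the $\K$-to-$\C(z)$ descent (used twice), and the construction of $f$ from the divisor of $d$—all resolvable through careful valuation and combinatorial bookkeeping tailored to the Mahler setting.
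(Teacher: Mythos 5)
Your overall architecture is sound, and two of your ingredients are genuinely nice: the direct proof of $(4)\Rightarrow(1)$ (reducing to $\phi(w)=cw$ and observing $\delta(w)/w\in R^{\phi}=\C$) is shorter than the paper's route through $(3)\Rightarrow(1)$, and extracting the coefficient of $(\log z)^{1}$ in $(1)\Rightarrow(3)$ — where $S(i,1)=1$ gives $(\sum_i c_i)\,g$ directly — is slicker than the paper, which extracts the \emph{top} coefficient and obtains an equation for $\vartheta^{\nu}(g)$. However, your treatment of the degenerate case $\sum_i c_i=0$ has a genuine gap. After factoring $\mathcal{L}=\mathcal{L}'\circ(\delta-1)$ you are applying $\mathcal{L}'$ to $(\delta-1)(\log(z)g)=(\log z)^{2}\vartheta(g)$, whose expansion never has a $(\log z)^{1}$ term; the first usable coefficient is that of $(\log z)^{2}$, which yields an equation for $\vartheta(g)$, not for $g$ (and after $k$ factorizations, for $\vartheta^{k}(g)$). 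So the iteration does \emph{not} ``reduce to the former case'': you are left needing to pass from $\vartheta^{k}(g)=p^{k+1}\phi(h)-h$ back to $g=p\phi(d)-d$, i.e.\ to integrate $k$ times. This is exactly the step the paper spends most of its proof of $(1)\Rightarrow(3)$ on: one must take a primitive $G=\int h\,dz/z$ (which a priori contains terms $\log(z)$ and $\log(1-z\xi)$), absorb the constant of integration using $p^{\nu}-1\neq 0$, and then use the $\C(z)$-linear independence of these logarithms to show the rational part of $G$ alone satisfies the telescoping relation. Nothing in your sketch supplies this, and the alternative ``or $a\in\C^{\times}$'' does not rescue it, since nonconstant $g$ with a telescoper vanishing at $T=1$ certainly occur (left-composing any telescoper with $\delta-1$ produces one).

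There is a second substantive issue in $(3)\Rightarrow(4)$. The paper does not argue by divisors at all: it builds the formal solution $f_{0}=z^{m/(p-1)}\prod_{k\geq 0}\phi^{k}(l)^{-1}$, shows $\vartheta(f_{0})=df_{0}$, and invokes B\'ezivin's rationality theorem (a series satisfying both a Mahler equation and a linear differential equation is rational) — a nontrivial external input. Your plan to read off from $p\phi(d)-d=\vartheta(a)/a$ that $d$ has simple poles with residues in $\beta\Z$ works by a maximal/minimal-modulus induction for poles with $|\beta|\neq 1$, but it breaks down at poles lying on cycles of $z\mapsto z^{p}$ (roots of unity with $\beta^{p^{s}}=\beta$): there the local relations only give $\sigma_{i+1}=\sigma_{i}+n_{i}$ with $\sum n_{i}=0$, which determines the residues up to a common additive constant and does not force integrality. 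This can be repaired (use that the backward orbit of such a $\beta$ under $z\mapsto z^{p}$ is infinite while $d$ has finitely many poles, so some preimage $\eta$ has $\mathrm{res}_{\eta}(d)=0$, pinning $\sigma\in\Z$), but as written the ``pole and valuation analysis'' does not close. Relatedly, in your descent from $\K$ to $\C(z)$, the cycle equations $h(z)=p^{s}z^{Q}h(z^{p^{s}})$ are killed not by the valuation $Q/(1-p^{s})$ being ``typically'' non-integral (it can be integral) but by the leading-coefficient relation $\alpha=p^{s}\alpha$, which forces $\alpha=0$; the paper's minimal-$|j_{0}|$ Laurent-coefficient argument avoids this bookkeeping entirely.
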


\begin{proof}
We first prove the implication $(3) \Rightarrow (2)$. Assume that there exists ${d\in \C(z)}$ such that 
$
\vartheta (a)=a(p \phi(d)-d).
$  
Then, $d_{1} :=d \log (z) \in \K'$ satisfies 
$
\delta(a)=a(\phi(d_{1})-d_{1})
$ 
and, hence,  $\Galdelta(\cQ_{S}/\L)$ is conjugate to a subgroup of $\C^{\times}$ in virtue of Proposition~\ref{propo:ppvcompintiso}. 

We now prove $(2) \Rightarrow (3)$.  We assume that $\Galdelta(\cQ_{S}/\L)$ is conjugate to a subgroup of $\C^{\times}$. By Proposition \ref{propo:ppvcompintiso}, there exists $d_{1}\in \K'$ such that 
$
\delta(a)=a(\phi (d_{1})-d_{1}).
$ 
Therefore, we have 
\begin{equation}\label{zddz}
\vartheta(a)=a(p\phi (d_{2})-d_{2})
\end{equation}
with $d_{2} :=\frac{d_{1}}{\log(z)} \in \K'$. We shall now prove that there exists $d_{3} \in \K$ such that 
$
\vartheta(a)=a(p\phi (d_{3})-d_{3}).
$ 
Indeed, let $u(X,Y) \in \C(X,Y)$, $k \geq 1$ and  $v \in \C(X)$ be such that 
$
d_{2}=u(z^{1/k},\log(z)) \text{ and } \frac{\vartheta(a)}{a}=v(z).
$ 
The equation (\ref{zddz}) can be rewritten as follows 
$$
v(z)=pu(z^{p/k},p\log(z))-u(z^{1/k},\log(z)).
$$
Since $z^{1/k}$ and $\log(z)$ are algebraically independent over $\C$, we get 
$$
v(X^{k})=pu(X^{p},pY)-u(X,Y).
$$
We see $u(X,Y)$ as an element of $\C(X)(Y) \subset \C(X)((Y))$ as follows:
$
{u(X,Y)=\sum_{j \geq -N} u_{j}(X)Y^{j},}
$ 
for some $N\in \Z$. We have 
$$
v(X^{k})=pu(X^{p},pY)-u(X,Y)=\sum_{j \geq -N} (p^{j+1}u_{j}(X^{p})-u_{j}(X))Y^{j}.
$$
Equating the coefficients of $Y^{0}$ in this equality, we obtain  
$$
pu_{0}(X^{p})-u_{0}(X)=v(X^{k}).
$$
Hence, $d_{3}:=u_{0}(z^{1/k})$ has the required property. 

We claim that $d_{3}$ belongs to $\C(z)$. Indeed, suppose to the contrary that ${d_{3} \not \in \C(z)}$. Let $k \geq 2$ be such that $d_{3} \in \C(z^{1/k})$. We see $d_{3}$ in $\C((z^{1/k}))$:
$
d_{3}=\sum_{j \geq -N} d_{3,j}z^{j/k}
$
for some $N \in \Z$. Let $j_{0} \in \Z$ be such that $k \not \vert j_{0}$ and $d_{3,j_{0}} \neq 0$, with $\vert j_{0} \vert$ minimal for this property. Then, the coefficient of $z^{j_{0}/k}$ in $p\phi(d_{3})-d_{3}$ is nonzero, and this contradicts the fact that $p\phi(d_{3})-d_{3}$ belongs to $\C(z)$. This proves (3).

 We now prove $(3) \Rightarrow (1)$. Assume that there exists $d\in \C(z)$ such that 
$
\vartheta (a)=a(p \phi(d)-d).
$  
Then, $d_{1} :=d \log (z) \in \K'$ satisfies 
$
\delta(a)=a(\phi(d_{1})-d_{1})
$ 
and, hence, Proposition \ref{propo:hypertransrank1} ensures that $u$ is hyperalgebraic over $(\K',\delta)$. Therefore, $u$  is hyperalgebraic over $(\K',\vartheta)$ and the conclusion follows from the fact that $(\K',\vartheta)$ is hyperalgebraic over $(\C(z),\vartheta)$. 

We now prove $(1) \Rightarrow (3)$. Proposition \ref{propo:hypertransrank1}, applied to the difference equation $\phi(y)=ay$ over the  $\pd$-field $\K'$, ensures that 
there exist
$
\mathcal{L}_{1}:= \sum_{i=1}^{\nu} \b_{i}\delta^{i}
$
with coefficients $\b_{1},\dots,\b_{\nu}=1$ in $\C$ and $g_{1} \in \C(z^{1/k},\log(z))$ such that  
\begin{equation}\label{L1}
\mathcal{L}_{1}\left(\frac{\delta (a)}{a} \right)=\phi(g_{1})-g_{1}.
\end{equation}
We shall now prove that there exists $g_{2} \in \C(z^{1/k})$ such that  
$$
\vartheta^{\nu}\left(\frac{\vartheta (a)}{a} \right)=p^{\nu+1} \phi(g_{2}) -g_{2}.
$$
Indeed, it is easily seen that there exists $v(X,Y) \in \C(X)[Y]$ such that ${\mathcal{L}_{1}\left(\frac{\delta (a)}{a} \right)=v(z,\log(z))}$. Using the fact that  
$
\delta^{i}=\log(z)^{i}\vartheta^{i} +$ terms of lower degree in $\log(z)$, we see that  
\begin{multline*}
\mathcal{L}_{1}\left(\frac{\delta (a)}{a} \right)
= \mathcal{L}_{1}\left(\log(z)\frac{\vartheta(a)}{a} \right)
=  \vartheta^{\nu}\left(\frac{\vartheta (a)}{a} \right) (\log(z))^{\nu+1} \\ 
+  \text{ terms of lower degree in $\log(z)$.}
\end{multline*}
On the other hand, let $u(X,Y) \in \C(X,Y)$ and $k \geq 1$ be such that ${g_{1}=u(z^{1/k},\log(z))}$. The equation (\ref{L1}) can be rewritten as follows 
$$
v(z,\log(z))=u(z^{p/k},p\log(z))-u(z^{1/k},\log(z)).
$$
Since $z^{1/k}$ and $\log(z)$ are algebraically independent over $\C$, we get 
$$
v(X^{k},Y)=u(X^{p},pY)-u(X,Y).
$$
We see $u(X,Y)$ as an element of $\C(X)(Y) \subset \C(X)((Y))$ as follows:
$
{u(X,Y)=\sum_{j \geq -N} u_{j}(X)Y^{j}}
$ 
for some $N\in \Z$.
So, 
$$
v(X^{k},Y)=u(X^{p},pY)-u(X,Y)=\sum_{j \geq -N} (p^{j}u_{j}(X^{p})-u_{j}(X))Y^{j}.
$$
Equating the coefficients of $Y^{\nu+1}$ in this equality, and letting $X=z^{1/k}$, we obtain, 
$$
p^{\nu+1} u_{\nu +1}(z^{p/k})-u_{\nu+1}(z^{1/k})=\vartheta^{\nu}\left(\frac{\vartheta (a)}{a} \right).
$$
Therefore, $g_{2} = u_{\nu +1}(z^{1/k}) \in \C(z^{1/k})$ has the required property. 
One can show that $g_{2}$ belongs to $\C(z)$ by arguing as for the proof of the fact that $d_{3} \in \C(z)$ in the proof of (2) $\Rightarrow (3)$ above.
We now claim that there exists $g_{3} \in \C(z)$ such that 
$$
\frac{\vartheta (a)}{a} = p \phi(g_{3}) - g_{3}.
$$
If $\nu=0$, then $g_{3}:=g_{2}$ has the expected property. Assume that $\nu >0$.
Let $G_{2} = \int \frac{g_{2}}{z}$ be some primitive of $\frac{g_{2}}{z}$ that we see as a function on some interval $(0,\epsilon)$, $\epsilon >0$. We have 
$$
\vartheta\left(p^{\nu} \phi(G_{2}) -G_{2} \right)
=p^{\nu+1} \phi(g_{2}) -g_{2}=\vartheta^{\nu}\left(\frac{\vartheta (a)}{a} \right), 
$$
so there exists $C \in \C$ such that 
$$
p^{\nu} \phi(G_{2}) -G_{2} = \vartheta^{\nu-1}\left(\frac{\vartheta (a)}{a} \right) +C.
$$
Hence, $G_{3}:=G_{2}-\frac{C}{p^{\nu}-1}$ satisfies 
$$
p^{\nu} \phi(G_{3}) -G_{3} = \vartheta^{\nu-1}\left(\frac{\vartheta (a)}{a} \right).
$$
But $G_{3} = G_{4}+\ell$ where $G_{4} \in \C(z)$ and $\ell$ is a $\C$-linear combination of $\log(z)$ and of functions of the form $\log(1-z\xi)$\footnote{Here, $\log(z)$ is the principal determination of the logarithm, and $\log(1-z\xi)$ is such that $\log(1-0\xi)=0$} with $\xi \in \C ^{\times}$. Using the $\C$-linear independence of any $\C$-linear combination of $\log(z)$ and of functions of the form $\log(1-z\xi)$ with $\xi \in \C ^{\times}$ with any element of $\C(z)$, we see that the equality 
$$
p^{\nu} \phi(G_{3}) -G_{3} = \left(p^{\nu} \phi(G_{4}) -G_{4}\right) + \left(p^{\nu} \phi(\ell) -\ell\right) = \vartheta^{\nu-1}\left(\frac{\vartheta (a)}{a} \right)
$$
implies that 
$$
p^{\nu} \phi(G_{4}) -G_{4} = \vartheta^{\nu-1}\left(\frac{\vartheta (a)}{a} \right).
$$
Iterating this argument, we find $g_{3} \in \C(z)$ with the expected property. This proves (3).

We shall now prove (3) $\Rightarrow$ (4). We assume that there exists $d\in \C(z)$ such that 
$\vartheta(a)=a(p \phi(d)-d)$. 
We write $a=cz^{m}l$ with $c \in \C^{\times}$, $m \in \Z$ and $l \in \C(z)$ without pole at $0$ and such that $l(0)=1$. Since $\frac{\vartheta(a)}{a}= \frac{\vartheta(c^{-1}a)}{c^{-1}a}$, we can assume that $c=1$. A fundamental solution of $\phi(y)=ay$ is given by 
$$
f_{0}=z^{\frac{m}{p-1}}\prod_{k\geq 0} \phi^{k}(l)^{-1} \in z^{\frac{m}{p-1}} \C[[z]] \subset \C((z^{\frac{1}{p-1}})).
$$ 
We have $\delta(a)a^{-1}=\phi(\widetilde{d})-\widetilde{d}$ with $\widetilde{d}=\log(z)d$. This is the integrability condition for the system of equations 
$$
\begin{cases}
\phi(y)=ay \\ 
\delta(y)=\widetilde{d}y, \text{ {\it i.e.}, } \vartheta(y)=dy.
\end{cases}
$$ 
A straightforward calculation shows that $\delta(f_{0})-\widetilde{d}f_{0}$ is a solution of $\phi(y)=ay$ so there exists $q \in \C$ such that $\delta(f_{0})=(q+\widetilde{d})f_{0}$, {\it i.e.}, $\log(z) \vartheta(f_{0}) = (q+\log(z) d)f_{0}$ (here, we work in the $(\phi,\delta)$-field $\C((z^{\frac{1}{p-1}}))(\log(z))$ and we have used the fact that the field of $\phi$-constants of $\C((z^{\frac{1}{p-1}}))(\log(z))$ is equal to $\C$, so that the solutions of $\phi(y)=ay$ in $\C((z^{\frac{1}{p-1}}))(\log(z))$ are of the form $\lambda f_{0}$ for some $\lambda \in \C$). Therefore, $\vartheta(f_{0})=d f_{0}$. So, $f_{0}$ satisfies a nonzero linear differential equation with coefficients in $\K$, and also a nonzero linear Mahler equation with coefficients in $\K$. It follows from \cite[Theorem 1.3]{Be94} that $f_{0} \in \C(z^{\frac{1}{p-1}})$. Therefore, $f_{0}=z^{\frac{m}{p-1}} h$ for some $h \in \C(z)$, and, hence, $a=\phi(f_{0})f_{0}^{-1}=z^{m} \phi(h)h^{-1}$.

We shall now prove (4) $\Rightarrow$ (3). We assume that there exists $ c \in \C^{\times}$, $m \in \Z$ and $f \in \C(z)^{\times}$ such that $a=cz^{m}\frac{\phi(f)}{f}$. Then, $\vartheta(a)/a=p\phi(d)-d$ with ${d=m/(p-1)+\frac{\vartheta(f)}{f} \in \C(z)}$. Whence the desired result. 
\end{proof}

%%%%%%%%%%%%%%%%%%%%%%%%%%%%%%%%%%%%%%%%%%%%%%%%%%%%%%%%%%%%%%%%%%%%%%
%%%%%%%%%%%%%%%%%%%%%%%%%%%%%%%%%%%%%%%%%%%%%%%%%%%%%%%%%%%%%%%%%%%%%%%
%%%%%%%%%%%%%%%%%%%%%%%%%%%%%%%%%%%%%%%%%%%%%%%%%%%%%%%%%%%%%%%%%%%%%%%%%%%%%%%%%%%%%%%
%%%%%%%%%%%%%%%%%%%%%%%%%%%%%%%%%%%%%%%%%%%%%%%%%%%%%%%%%%%%%%%%%%%%%%%%%%%%%%%%%%%%%%

\begin{rem}
The technics employed above could also be used in order to recover a famous result of Nishioka about the hypertranscendence of solutions of inhomogeneous Mahler equations of order one~\cite{Nish}. A Galoisian approach (but without parametrized Picard-Vessiot theory) of the work of Nishioka has been proposed by  Nguyen in \cite{NG}.
\end{rem}

%%%%%%%%%%%%%%%%%%%%%%%%%%%%%%%%%%%%%%%%%%%%%%%%%%%%%%%%%%%%%%%%%%%%%%%%%%%%%%%%%%%%%%%%%%%%%%%%%%%%%%%%%%%%%%%%%%%%%%%%%%%%%
%%%%%%%%%%%%%%%%%%%%%%%%%%%%%%%%%%%%%%%%%%%%%%%%%%%%%%%%%%%%%%%%%%%%%%%%%%%%%%%%%%%%%%%%%%%%%%%%%%%%%%%%%%%%%%%%%%%%%%%%%%%

%%%%%%%%%%%%%%%%%%%%%%%%%%%%%%%%%%%%%%%%%%%%%%%%%%%%%%%%%%%%%%%%%%%%%%%%%%
\subsection{Mahler equations of higher order with large classical difference Galois group}\label{sec:hyperrank2} 
 %%%%%%%%%%%%%%%%%%%%%%%%%%%%%%%%%%%%%%%%%%%%%%%%%%%%%%%%%%%%%%%%%%%%%%%%%%%%%%%
 %%%%%%%%%%%%%%%%%%%%%%%%%%%%%%%%%%%%%%%%%%%%%%%%%%%%%%%%%%%%%%%%%%%%%%%%%%%%%%%%

Consider the  difference  system
\begin{equation}\label{eq102}
\phi (Y)=AY
\end{equation}
with $A \in \GL_n(\C(z))$. We let $S$ be a PPV ring for \eqref{eq102} over $\L$. The aim of the present section is to study the parametrized difference Galois group $\Galdelta(\cQ_S/\L)$ of \eqref{eq102} over $\L$ under the following assumption.

\begin{hypothese}\label{hypothese gros groupe}
In the rest of this section, we assume that the difference Galois group of \eqref{eq102} over the $\phi$-field $\K$ contains $\SL_{n}(\C)$.
\end{hypothese}

Note the following result. 
 
 \begin{lem}\label{lemm:preservationgroupbaseextlog}
 Assume that the assumption (\ref{hypothese gros groupe}) holds. Then, the  difference Galois group of \eqref{eq102} over the $\phi$-field $\L$ contains $\SL_{n}(\widetilde{\C})$.
  \end{lem}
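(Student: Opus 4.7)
The plan is to pass from $\K$ to $\L$ in two steps, corresponding to the two extensions $\K\subset\K'$ (adjoining $\log z$) and $\K'\subset\L$ (extending the field of constants from $\C$ to $\widetilde{\C}$).

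First, since $A\in\GL_n(\C(z))\subset \GL_n(\K)$ and Assumption~\ref{hypothese gros groupe} is in force, Corollary~\ref{coro contient SL} applies directly to the system \eqref{eq102} and gives that the difference Galois group of \eqref{eq102} over the $\phi$-field $\K'$ contains $\SL_n(\C)$.

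Second, I would handle the constant-field extension $\K'\subset\L$. By Lemma~\ref{lem:extconst}, $\L=\operatorname{Frac}(\widetilde{\C}\otimes_\C\K')$ is a $\phi$-field extension of $\K'$ whose field of constants is $\widetilde{\C}$. Since $\C$ is algebraically closed (so $\widetilde{\C}/\C$ is a regular extension of fields of constants), the classical base-change principle for difference Galois theory applies: if $R'$ is a PV ring for \eqref{eq102} over $\K'$ with fundamental matrix $U$, then the $\L$-$\phi$-subalgebra of $\cQ_{R'}\otimes_{\K'}\L$ (or, more precisely, the quotient of $\L\otimes_{\K'}R'$ by a maximal $\phi$-ideal) generated by the entries of $U$ and $\det(U)^{-1}$ is a PV ring for \eqref{eq102} over $\L$, and its difference Galois group is the base change from $\C$ to $\widetilde{\C}$ of the Galois group over $\K'$. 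This is precisely the content of \cite[Corollary~2.5]{CHS}, which is already used elsewhere in the paper in the proof of Proposition~\ref{propo:projisomonoundgencondition}. Applying it to the inclusion $\SL_n(\C)\subset\Gal(\cQ_{R'}/\K')$ obtained in the first step, we get $\SL_n(\widetilde{\C})\subset\Gal(\cQ_{R_\L}/\L)$, where $R_\L$ is a PV ring for \eqref{eq102} over $\L$.

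The main subtlety is the second step, because $\widetilde{\C}\otimes_\C R'$ need not be $\phi$-simple, and one has to argue that passing to a quotient by a maximal $\phi$-ideal preserves the $\SL_n$-part of the Galois group. This is exactly what the regularity of the extension $\widetilde{\C}/\C$ guarantees in \cite[Corollary~2.5]{CHS}; no new work is needed here beyond quoting it.
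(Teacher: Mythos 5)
Your proposal is correct and follows exactly the paper's own two-step argument: Corollary~\ref{coro contient SL} for the passage from $\K$ to $\K'$, then \cite[Corollary~2.5]{CHS} for the constant-field extension from $\K'$ to $\L$. The extra detail you give on why the base-change step is legitimate is consistent with what that corollary provides.
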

\begin{proof}
 
Corollary \ref{coro contient SL} ensures that the difference Galois group of \eqref{eq102} over the $\phi$-field $\K'$ contains $\SL_n(\C)$.
 The fact that the difference Galois group of \eqref{eq102} over the $\phi$-field $\L$ contains $\SL_{n}(\widetilde{\C})$ is now a direct consequence of \cite[Corollary~2.5]{CHS}. 

\end{proof}

Let $U \in \GL_{n}(S)$ be a fundamental matrix of solutions of \eqref{eq102} and set 

$$
d:=\det(U) \in S^\times.
$$ 
Then, $d$ is a fundamental solution of $\phi(y)=\det(A) y$ in $S$.
 We split our study of $\Galdelta(\cQ_S/\L)$ in two cases, depending on whether $d$ is hyperalgebraic or hypertranscendental over $(\L,\delta)$. Note that Proposition \ref{propo:mahlerrang1homoge} may be used to check whether $d$ is hyperalgebraic or not.

%%%%%%%%%%%%%%%%%%%%%%%%%%%%%%%%%%%%%%%%%%%%%%%%%%%%%%%%%%%%%%%%%%%%%%%%%%%%%%%%%%%%%%%%%%%%%%%%%%%%%%%%%%%%%
%%%%%%%%%%%%%%%%%%%%%%%%%%%%%%%%%%%%%%%%%%%%%%%%%%%%%%%%%%%%%%%%%%%%%%%%%%%%%%%%%%%%%%%%%%%%%%%%%%%%%%%%%%%
\subsubsection{Hyperalgebraic determinant}\label{subsubsec:hyperalgdet}

This section is devoted to the proof of the following result. 

\begin{theo}\label{theo hyper alg}
Assume that the assumption (\ref{hypothese gros groupe}) holds and that $d$ is hyperalgebraic over $(\C(z),\vartheta)$ (or, equivalently, that the parametrized difference Galois group of $\phi(y)=\det (A) y$ over $\L$ is included in $\C^{\times}$; see Proposition~\ref{propo:mahlerrang1homoge}). Then, the parametrized difference Galois group $\Galdelta(\cQ_S/\L)$ is a subgroup of $\C^{\times}\SL_{n}(\widetilde \C)$ 
containing $\SL_{n}(\widetilde \C)$.
\end{theo}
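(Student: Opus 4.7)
The plan is to apply Proposition \ref{propo:projisomonoundgencondition} with the ambient $\pd$-field taken to be $\K'$, so that the roles of $\k,\widetilde\k,\Const$ in that statement are played by $\C,\widetilde\C,\C$ respectively. Its first hypothesis --- that the classical difference Galois group of $\phi(Y)=AY$ over $\K'$ contains $\SL_n(\C)$ --- follows from Assumption \ref{hypothese gros groupe} via Corollary \ref{coro contient SL}. Its second hypothesis --- that the parametrized difference Galois group of $\phi(y)=\det(A)y$ over $\L$ lies in $\C^\times$ --- follows from the hyperalgebraicity of $d=\det(U)$ together with Proposition \ref{propo:mahlerrang1homoge}. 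Proposition \ref{propo:projisomonoundgencondition} thus leaves two alternatives: either (1) $\Galdelta(\cQ_S/\L)$ is conjugate to a subgroup of $\GL_n(\C)$ containing $\SL_n(\C)$, or (2) $\SL_n(\widetilde\C)\subset \Galdelta(\cQ_S/\L)\subset \C^\times\SL_n(\widetilde\C)$. Case (2) is precisely the conclusion sought, so it suffices to rule out case (1).

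Assume for contradiction that case (1) holds. The ``moreover'' clause of Proposition \ref{propo:projisomonoundgencondition} then produces $B\in (\K')^{n\times n}=\K(\log(z))^{n\times n}$ satisfying $\phi(B)=ABA^{-1}+\delta(A)A^{-1}$. Since $\delta=\log(z)\vartheta$ and $A\in \C(z)^{n\times n}$, the right-hand side is $\log(z)\vartheta(A)A^{-1}$. Embed $\K(\log(z))$ into the formal Laurent completion $\K(\!(\log(z)^{-1})\!)$ at $\log(z)=\infty$: then $B=\sum_{k\le N}B_k\log(z)^k$ with $B_k\in \K^{n\times n}$. Since $\phi(\log(z))=p\log(z)$, matching the coefficient of $\log(z)^1$ in the equation yields $B_1\in \K^{n\times n}$ with
\[
p\,\phi(B_1)=AB_1A^{-1}+\vartheta(A)A^{-1}.
\]
Using the commutation relation $\vartheta\phi=p\phi\vartheta$, one recognizes this as the integrability condition for the joint system $\phi(Y)=AY,\ \vartheta(Y)=B_1Y$; a standard argument then provides a common fundamental matrix $U$ for both equations in some $(\phi,\vartheta)$-extension of $\K$.

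Choose $N\ge 1$ so that the entries of $A$ and $B_1$ lie in $\C(z^{1/N})=\C(w)$, with $w=z^{1/N}$. Under $z=w^N$, the operator $\phi$ becomes $w\mapsto w^p$, and $\vartheta(Y)=B_1Y$ becomes a linear ODE over $\C(w)$. Each entry of $U$ is then a common solution of a linear Mahler equation and of a linear ODE over $\C(w)$; by B\'ezivin's theorem (\cite[Theorem~1.3]{Be94}, the tool already employed in the proof of Proposition \ref{propo:mahlerrang1homoge}), each such entry is algebraic over $\C(w)$, hence over $\K$. Consequently the Picard--Vessiot ring of $\phi(Y)=AY$ over $\K$ has transcendence degree zero, forcing the classical difference Galois group to be finite --- a contradiction with Assumption \ref{hypothese gros groupe} when $n\ge 2$ (the case $n=1$ being trivial). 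The main obstacle I expect is the rigour of the Laurent-series extraction of $B_1$: since the entries of $B$ are only rational, not polynomial, in $\log(z)$, one must pass to the formal completion $\K(\!(\log(z)^{-1})\!)$ and check that $\phi$ extends compatibly, so that identifying the $\log(z)^1$-coefficient really produces a matrix $B_1\in \K^{n\times n}$; a secondary technical point is descending carefully to the unramified field $\C(w)$ so that B\'ezivin's theorem applies.
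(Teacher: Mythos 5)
The first half of your argument is sound and coincides with the paper's Lemma~\ref{propo:alternative hyperalg}: applying Proposition~\ref{propo:projisomonoundgencondition} over $\K'$ (via Corollary~\ref{coro contient SL} and Proposition~\ref{propo:mahlerrang1homoge}) reduces the theorem to ruling out the case where $\Galdelta(\cQ_S/\L)$ is conjugate into $\GL_n(\C)$, and your descent of the integrability equation from $\K'$ to $\K$ by expanding $B$ in powers of $\log(z)$ and extracting the degree-one coefficient is exactly what the paper does (the paper expands in $\C(X)((Y))$ rather than $\C(X)((Y^{-1}))$, but either grading works since $\phi$ scales $\log(z)$ by $p$).

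The second half has a genuine gap. B\'ezivin's theorem \cite[Theorem 1.3]{Be94} is a statement about \emph{formal power series}: a series in $\C((z^{1/N}))$ satisfying both a linear Mahler equation and a linear ODE is rational. You apply it to the entries of a fundamental matrix $U$ living in ``some $(\phi,\vartheta)$-extension of $\K$'', i.e.\ to abstract ring elements, and for those the conclusion is simply false: $\log(z)$ satisfies $\phi(y)=py$ and $\vartheta^2(y)=0$, yet is transcendental over $\K$. Moreover, the system $\phi(Y)=AY$ need not admit even a single nonzero solution vector with entries in $\widehat{\K}=\cup_j\C((z^{1/j}))$; only the twisted system $\phi(Y)=c^{-1}AY$ does, for a suitable constant $c\in\C^\times$ extracted from a triangularization over $\widehat{\K}$ (Lemma~\ref{lem sol form}). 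This is precisely why the paper must (i) pass to $A_1=c^{-1}A$ and re-verify the hypotheses for $A_1$ (using $G_1^{der}=G^{der}$), (ii) embed the explicit formal solution $u$ into a PPV ring (Lemma~\ref{lem:ppvformalsol}), and (iii) use the integrability relation only to show that the \emph{formal series} $u_i$ satisfy linear ODEs, so that the cyclic-vector argument plus B\'ezivin (Lemma~\ref{sol commune}) forces $u_i\in\K$, contradicting $\SL_n(\C)\subset G_1$ because the first column of the fundamental matrix would then be Galois-fixed. Without producing an actual formal-series solution and restricting the B\'ezivin step to it, your contradiction does not go through.
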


Before proceeding with the proof of this theorem, we give some lemmas.

\begin{lem}\label{propo:alternative hyperalg}
Assume that the assumption (\ref{hypothese gros groupe}) holds and that $d$ is hyperalgebraic over $(\C(z),\vartheta)$ (or, equivalently, that the parametrized difference Galois group of $\phi(y)=\det (A) y$ over $\L$ is included in $\C^{\times}$; see Proposition~\ref{propo:mahlerrang1homoge}). Then, we have the following alternative:
\begin{enumerate}
\item $\Galdelta(\cQ_S/\L)$ is conjugate to a subgroup of $\GL_{n}(\C)$ containing $\SL_{n}(\C)$; 
\item $\Galdelta(\cQ_S/\L)$ is equal to a subgroup of $\C^{\times}\SL_{n}(\widetilde \C)$ containing $\SL_{n}(\widetilde \C)$.
\end{enumerate}
Moreover, the first case holds if and only if there exists $B \in \K^{n \times n}$ such that 
\begin{equation}\label{integ hyperalg}
p\phi(B)=ABA^{-1}+\vartheta(A)A^{-1}.
\end{equation}
\end{lem}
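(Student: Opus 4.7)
The plan is to deduce the lemma from Proposition \ref{propo:projisomonoundgencondition}, applied with base $(\phi,\delta)$-field $\K'$ (since $\K$ itself is not preserved by $\delta$), and then to translate the resulting integrability condition from $(\K',\delta)$ back to $(\K,\vartheta)$.

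First, I verify the hypotheses of Proposition \ref{propo:projisomonoundgencondition} with $\K'$ playing the role of the base $\pd$-field. We have $(\K')^\phi = \C$, which is algebraically closed. Corollary \ref{coro contient SL} ensures that Assumption \ref{hypothese gros groupe} implies that the difference Galois group of $\phi(Y)=AY$ over $\K'$ still contains $\SL_n(\C)$. Moreover, the hypothesis that $d$ is hyperalgebraic over $(\C(z),\vartheta)$ is, by Proposition \ref{propo:mahlerrang1homoge} (equivalence $(1)\Leftrightarrow(2)$), equivalent to the parametrized difference Galois group of $\phi(y)=\det(A)y$ over $\L$ being contained in $\C^\times = \widetilde{\C}^\delta$. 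Applying Proposition \ref{propo:projisomonoundgencondition}, we obtain the dichotomy stated in the lemma together with the fact that case (1) holds if and only if there exists $\widetilde B \in (\K')^{n\times n}$ satisfying
\[
\phi(\widetilde B) = A\widetilde B A^{-1} + \delta(A)A^{-1}.
\]

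It then remains to show that the existence of such a $\widetilde B$ in $(\K')^{n\times n}$ is equivalent to the existence of $B \in \K^{n\times n}$ satisfying equation (\ref{integ hyperalg}). The converse direction is straightforward: if $p\phi(B) = ABA^{-1} + \vartheta(A)A^{-1}$ for some $B \in \K^{n\times n}$, then $\widetilde B := \log(z)\,B$ lies in $(\K')^{n\times n}$ and, using $\phi(\log(z)) = p\log(z)$ together with $\delta(A) = \log(z)\,\vartheta(A)$ (since the entries of $A$ lie in $\C(z) \subset \K$), one checks by direct calculation that $\widetilde B$ satisfies the displayed identity. For the direct implication, I use the $\phi$-equivariant embedding $\K' = \K(\log(z)) \hookrightarrow \K(\!(\log(z))\!)$ given by the Laurent expansion at $\log(z) = 0$: this is $\phi$-equivariant because $\phi$ scales the local parameter $\log(z)$ by the factor $p$ and preserves $\K$. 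Writing $\widetilde B = \sum_{k\geq -N} B_k\,\log(z)^k$ with $B_k \in \K^{n\times n}$, the integrability condition above becomes
\[
\sum_k p^k\phi(B_k)\,\log(z)^k \;=\; \sum_k AB_kA^{-1}\,\log(z)^k + \vartheta(A)A^{-1}\,\log(z),
\]
and identifying the coefficients of $\log(z)^1$ yields $p\phi(B_1) = AB_1A^{-1} + \vartheta(A)A^{-1}$, so that $B := B_1 \in \K^{n\times n}$ is a solution of (\ref{integ hyperalg}).

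The main obstacle is this final descent step: passing from a solution living in the auxiliary $\pd$-field $\K'$ (equipped with $\delta$) to one in the original Mahler base $\K$ (equipped with $\vartheta$). It is made transparent by the observation that the right-hand side $\delta(A)A^{-1} = \log(z)\,\vartheta(A)A^{-1}$ is concentrated in degree one in $\log(z)$, so that extracting the coefficient of $\log(z)$ in any formal $\log(z)$-expansion of $\widetilde B$ immediately produces a matrix in $\K^{n\times n}$ satisfying the $(\vartheta,\K)$-version of the integrability condition.
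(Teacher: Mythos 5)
Your proof is correct and follows essentially the same route as the paper: reduce to Proposition \ref{propo:projisomonoundgencondition} over the base $(\phi,\delta)$-field $\K'$, then descend from $\K'$ to $\K$ by Laurent-expanding the solution in powers of $\log(z)$ (which is transcendental over $\K$) and extracting the coefficient of $\log(z)^{1}$, noting that $\delta(A)A^{-1}=\log(z)\,\vartheta(A)A^{-1}$ is concentrated in degree one; the converse via $\widetilde B=\log(z)B$ is also exactly the paper's argument. The only cosmetic difference is that the paper phrases the expansion in terms of $u(X,Y)\in\C(X)((Y))$ after invoking the algebraic independence of $z^{1/k}$ and $\log(z)$, which amounts to the same coefficient comparison.
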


\begin{proof}
Using Proposition \ref{propo:projisomonoundgencondition}, we are reduced to prove that the equation 
\begin{equation}\label{integ hyperalg sur L}
\phi(B)=ABA^{-1}+\delta(A)A^{-1}
\end{equation}
has a solution $B \in \K'^{n \times n} $ if and only if the equation (\ref{integ hyperalg}) has a solution $B \in \K^{n \times n}$.

Assume that the equation (\ref{integ hyperalg sur L}) has a solution $B \in \K'^{n \times n}$. Let ${u(X,Y) \in \C(X,Y)^{n\times n}}$, $k \geq 1$,  $v(X) \in \GL_{n}(\C(X))$ and $w(X) \in \C(X)^{n\times n}$ be such that 
$$
B=u(z^{1/k},\log(z)), \ A=v(z) \text{ and } \delta(A)A^{-1}=\log(z)w(z).
$$ 
The equation (\ref{integ hyperalg sur L}) can be rewritten as follows 
$$
u(z^{p/k},p\log(z))=v(z)u(z^{1/k},\log(z))v(z)^{-1}+\log(z)w(z).
$$
Since $z^{1/k}$ and $\log(z)$ are algebraically independent over $\C$, we get 
$$
u(X^{p},pY)=v(X^{k})u(X,Y)v(X^{k})^{-1}+Yw(X^{k}).
$$
We see $u(X,Y)$ as an element of $\C(X)(Y)^{n \times n} \subset \C(X)((Y))^{n \times n}$:
$
u(X,Y)=\sum_{j \geq -N} u_{j}(X)Y^{j}
$ 
for some $N \in \Z$. 
We have 
$$
\sum_{j \geq -N} u_{j}(X^{p})p^{j}Y^{j}= \left(\sum_{j \geq -N}  v(X^{k})u_{j}(X)v(X^{k})^{-1}Y^{j}\right)+Yw(X^{k}).
$$
Equating the terms of degree $1$ in $Y$, we get 
$$
pu_{1}(X^{p})= v(X^{k})u_{1}(X)v(X^{k})^{-1}+w(X^{k}).
$$
Therefore, $B_{1}:=u_{1}(z^{1/k}) \in \K$ is a solution of (\ref{integ hyperalg}).   

Conversely, assume that the equation (\ref{integ hyperalg}) has a solution $B \in \K^{n \times n}$. Then $B_{1}:=B\log(z) \in \K'^{n \times n}$ satisfies 
$$
\phi(B_{1})=AB_{1}A^{-1}+\delta(A)A^{-1}.
$$

\end{proof}

\begin{lem}\label{lem:ppvformalsol}
Assume that the system $\phi(Y)=BY$, with $B \in \GL_n(\K')$, has a solution $u=(u_{1},\dots,u_{n})^{t}$ with coefficients in $\C((z^{1/k}))$ for some integer $k \geq 1$. Then, there exists a
PPV ring $T$ over $\L$ of $\phi(Y)=BY$ that contains the $\L$-$\delta$-algebra $\L\{ u_{1},\dots,u_{n} \}$. 

\end{lem}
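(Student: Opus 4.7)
The plan is to realize $u_{1},\ldots,u_{n}$ inside some PPV ring over $\L$ by first enlarging the base to a $\pd$-field $F$ in which $u$ naturally sits and whose field of $\phi$-constants remains $\widetilde{\C}$, then constructing over $F$ a fundamental matrix of solutions whose first column is $u$, and finally descending to $\L$ by passing to a well-chosen sub-$\L$-$\pd$-algebra.

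For the first step I would set $A:=\bigcup_{N\geq 1}\widetilde{\C}((z^{1/N}))$ and $F:=A(\log(z))$, equipped with the natural extensions of $\phi$ and $\delta=z\log(z)d/dz$. This is a $\pd$-field extension of $\L$ that contains the field $\C((z^{1/k}))$ in which the $u_{i}$ live. The key verification is that $F^\phi=\widetilde{\C}$: a valuation argument at $z=0$ shows that any nonzero $a\in A$ with $\phi(a)=ca$ and $c\in\widetilde{\C}^{\times}$ must have lowest exponent of $z^{1/N}$ equal to $0$ and satisfy $c=1$, which gives $A^\phi=\widetilde{\C}$ and rules out nontrivial $\phi$-eigenvectors in $A$; then writing a $\phi$-invariant element of $F$ as a reduced fraction $P(\log(z))/Q(\log(z))$ with $Q$ monic and comparing coefficients in the identity $\phi(f)=f$ forces, via the first step, $Q=\log(z)^{d}$ and $P\in\widetilde{\C}\cdot\log(z)^{d}$, so coprimality imposes $d=0$ and $f\in\widetilde{\C}$.

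Next, since $F^\phi=\widetilde{\C}$ is differentially closed, the parametrized Picard-Vessiot theory of Section~\ref{sec:parampv} provides a PPV ring $T^{\sharp}$ for $\phi(Y)=BY$ over $F$ with a fundamental matrix $V\in\GL_{n}(T^{\sharp})$. The solution space of $\phi(Y)=BY$ in $\cQ_{T^{\sharp}}^{n}$ is a $\widetilde{\C}$-vector space of dimension $n$ spanned by the columns of $V$. If $u=0$ the conclusion of the lemma is trivial; otherwise $u\in F^{n}\subset\cQ_{T^{\sharp}}^{n}$ is a nonzero solution, so $u=Vc$ for some nonzero $c\in\widetilde{\C}^{n}$. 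Completing $c$ to a basis yields $P\in\GL_{n}(\widetilde{\C})$ with first column $c$, and $U:=VP\in\GL_{n}(T^{\sharp})$ is a fundamental matrix of $\phi(Y)=BY$ whose first column is exactly $u$.

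Finally, I would take $T$ to be the $\L$-$\pd$-subalgebra of $T^{\sharp}$ generated by the entries of $U$ and $\det(U)^{-1}$; since $\phi(U)=BU$ with $B\in\GL_{n}(\L)$, this is also the $\L$-$\delta$-algebra generated by those entries. By construction $\L\{u_{1},\ldots,u_{n}\}\subset T$. To identify $T$ as a PPV ring for $\phi(Y)=BY$ over $\L$, I would invoke the parametrized analog of Lemma~\ref{lem: caracpvring} (whose proof is identical to the one recalled in the excerpt): $T$ is reduced since $T^{\sharp}$ is, its total ring of fractions $\cQ_T$ embeds into $\cQ_{T^{\sharp}}$ by the PPV version of \cite[Corollary~6.9]{HS}, so $\cQ_T^\phi\subset\cQ_{T^{\sharp}}^{\phi}=\widetilde{\C}$, and the reverse inclusion follows from $\widetilde{\C}\subset\L\subset T$; together with the existence of the fundamental matrix $U\in\GL_{n}(T)$ and the generation condition, this gives that $T$ is a PPV ring for $\phi(Y)=BY$ over $\L$. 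The main obstacle is the computation $F^\phi=\widetilde{\C}$: it is what legitimates the use of the PPV theory over $F$ with the correct constants and makes the final descent to $\L$ possible.
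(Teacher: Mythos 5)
Your argument is correct and follows essentially the same strategy as the paper: enlarge the base to a $\pd$-field containing the $u_{i}$ whose field of $\phi$-constants is $\widetilde{\C}$, take a PPV ring there with a fundamental matrix whose first column is $u$, and then pass to the $\L$-$\pd$-subalgebra generated by its entries and the inverse determinant. The only (harmless) variation is your choice of auxiliary field: you work over all of $\bigcup_{N}\widetilde{\C}((z^{1/N}))(\log(z))$ and compute its $\phi$-constants directly by a valuation argument, whereas the paper uses the smaller field $\L\langle u_{1},\dots,u_{n}\rangle=\operatorname{Frac}\bigl(\widetilde{\C}\otimes_{\C}\K'\langle u_{1},\dots,u_{n}\rangle\bigr)$, obtaining the constants from Lemma~\ref{lem:extconst} after noting that $\widehat{\K'}^{\phi}=\C$.
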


\begin{proof}
The result is obvious if $u=(0,\dots,0)^{t}$. We shall now assume that ${u \neq (0,\dots,0)^{t}}$.
We consider the field $\widehat{\K'}:=\cup_{j \geq 1}\C((z^{1/j}))(\log(z))$. We equip $\widehat{\K'}$ with the structure of $(\phi,\delta)$-field given by $\phi(f(z,\log(z)))=f(z^{p},p \log(z))$ and $\delta = \log(z) z \frac{d}{dz}$. It is easily seen that $\widehat{\K'}^{\phi}=\C$. One can see $\K'$ as a $(\phi,\delta)$-subfield of $\widehat{\K'}$. 
We let $F=\K'\langle u_{1},\dots,u_{n} \rangle$ be the $\delta$-subfield of $\widehat{\K'}$ generated over $\K'$ by $u_{1},\dots,u_{n}$; this is a $(\phi,\delta)$-subfield of $\widehat{\K'}$ such that $F^{\phi}=\C$. By Lemma~\ref{lem:extconst}, $\widetilde{\C} \otimes_\C F $ is an integral domain and its field of fractions $\L_{1}=\L\langle u_{1},\dots,u_{n} \rangle$ is a $(\phi,\delta)$-field such that $\L_1^\phi=\widetilde{\C}$. We consider a PPV ring $S_{1}$ for $\phi (Y)=BY$ over $\L_{1}$ and we let $U \in \GL_{n}(S_{1})$ be a fundamental matrix of solutions of this difference system. We can assume that the first column of $U$ is $u$.
Then, the $\L$-$(\phi,\delta)$-algebra $T$ generated by the entries of $U$ and by $\det(U)^{-1}$ contains $\L\{ u_{1},\dots,u_{n} \}$ and is a PPV ring for $\phi (Y)=BY$ over $\L$. Whence the result. 
\end{proof}

\begin{lem}\label{sol commune}
Let us consider a vector $u=(u_{1},\dots,u_{n})^{t}$ with coefficients in ${\widehat{\K}:=\cup_{j \geq 1} \C((z^{1/j}))}$ such that $\phi(u)=Bu$ for some $B \in \GL_{n}(\K)$. Assume moreover that each $u_{i}$ satisfies some nonzero linear differential equation with coefficients in $\cup_{j \geq 1} \widetilde{\C}(z^{1/j})$, with respect to the derivation $\vartheta$.  Then, the $u_{i}$ actually belong to $\K$.  
\end{lem}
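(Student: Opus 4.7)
The plan is to show that each $u_i$ satisfies both a nonzero Mahler equation and a nonzero linear differential equation with coefficients in $\K$, after which B\'ezivin's theorem \cite[Theorem~1.3]{Be94} (applied as in the proof of Proposition~\ref{propo:mahlerrang1homoge}) will force each $u_i$ to be rational.

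The Mahler equation for $u_i$ over $\K$ comes essentially for free from the system. Since $\phi(u) = Bu$ with $B \in \GL_n(\K)$ and $\K$ is $\phi$-stable, an immediate induction on $k$ gives $\phi^k(u_i) \in \K u_1 + \cdots + \K u_n$ for every $k \ge 0$. Hence the $(n+1)$ elements $u_i, \phi(u_i), \dots, \phi^n(u_i)$ are $\K$-linearly dependent, and clearing denominators yields a nonzero Mahler equation for $u_i$ with coefficients in $\K$.

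The main step is to descend the given differential equation from $F := \cup_{j\ge 1}\widetilde{\C}(z^{1/j})$ down to $\K = \cup_{j\ge 1}\C(z^{1/j})$. Fixing $j$ large enough so that $u_i \in \C((z^{1/j}))$ and the coefficients of the differential equation lie in $\widetilde{\C}(z^{1/j})$, I would show that $\widetilde{\C}(z^{1/j})$ and $\C((z^{1/j}))$ are linearly disjoint over $\C(z^{1/j})$ inside $\widetilde{\C}((z^{1/j}))$. The key observation is that any $\C$-basis $(c_\alpha)$ of $\widetilde{\C}$ remains $\C((z^{1/j}))$-linearly independent in $\widetilde{\C}((z^{1/j}))$: comparing coefficients of powers of $z^{1/j}$ in a vanishing sum reduces the question to the $\C$-linear independence of the $c_\alpha$. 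This proves the linear disjointness of $\widetilde{\C}$ and $\C((z^{1/j}))$ over $\C$, and the stronger claim follows because the regularity of $\widetilde{\C}/\C$ gives $\widetilde{\C}(z^{1/j}) = \widetilde{\C}\otimes_\C\C(z^{1/j})$. Consequently, the $\widetilde{\C}(z^{1/j})$-linear dependence of $u_i, \vartheta u_i, \dots, \vartheta^m u_i \in \C((z^{1/j}))$ supplied by the hypothesis forces their $\C(z^{1/j})$-linear dependence, producing a nonzero linear differential equation for $u_i$ with coefficients in $\K$.

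To conclude, the change of variable $w := z^{1/j}$ transports both equations into a nonzero Mahler equation (with respect to $w \mapsto w^p$) and a nonzero linear ODE for $v(w) := u_i \in \C((w))$, both with coefficients in $\C(w)$, so \cite[Theorem~1.3]{Be94} forces $v$ to be rational and hence $u_i \in \C(z^{1/j}) \subseteq \K$. The delicate point in this plan will be setting up the descent cleanly: identifying which ambient ring naturally contains both $\widetilde{\C}(z^{1/j})$ and the formal series $u_i$, and verifying the linear disjointness in a way that directly converts the linear relation among the $\vartheta^k u_i$ over $\widetilde{\C}(z^{1/j})$ into one over $\C(z^{1/j})$.
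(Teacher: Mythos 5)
Your proposal is correct, and it reaches the same key input as the paper -- B\'ezivin's theorem \cite[Theorem~1.3]{Be94} applied after the substitution $w=z^{1/j}$ -- but by a different reduction. The paper invokes the cyclic vector lemma to find $P\in\GL_n(\K)$ with $Pu=(f,\phi(f),\dots,\phi^{n-1}(f))^t$, applies B\'ezivin once to the single scalar $f$ (which inherits a Mahler equation over $\K$ and, as a $\K$-linear combination of the $u_i$, a linear $\vartheta$-equation over $\cup_j\widetilde{\C}(z^{1/j})$), and then recovers $u=P^{-1}(f,\phi(f),\dots)^t\in\K^n$. You instead work componentwise: your observation that $\phi^k(u_i)$ stays in the $\K$-span of $u_1,\dots,u_n$, so that $u_i,\phi(u_i),\dots,\phi^n(u_i)$ are $\K$-linearly dependent, gives each $u_i$ its own scalar Mahler equation by elementary linear algebra, avoiding the cyclic vector lemma altogether. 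You also make explicit a step the paper leaves implicit, namely the descent of the differential equation's coefficients from $\widetilde{\C}(z^{1/j})$ to $\C(z^{1/j})$ via the linear disjointness of $\widetilde{\C}(z^{1/j})$ and $\C((z^{1/j}))$ over $\C(z^{1/j})$; your coefficientwise argument for this is sound (only note that $\widetilde{\C}\otimes_\C\C(z^{1/j})$ is a domain with fraction field $\widetilde{\C}(z^{1/j})$ rather than being literally equal to it, and that one passes from linear disjointness over $\C$ to linear disjointness over $\C(z^{1/j})$ by the standard tower lemma). The only bookkeeping left unsaid in both arguments is normalizing the Mahler relation so that its extreme coefficients are nonzero and clearing the finite principal part of the Laurent series before quoting B\'ezivin; both are immediate since $\phi$ is invertible on $\widehat{\K}$ and multiplication by a monomial preserves both types of equations.
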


\begin{proof}
According to the cyclic vector lemma, there exists $P \in \GL_{n}(\K)$ such that $Pu = (f,\phi(f),\dots,\phi^{n-1}(f))^{t}$ for some $f \in \widehat{\K}$ which is a solution of a nonzero linear Mahler equation ({\it i.e.}, a $\phi$-difference equation) of order $n$ with coefficients in $\K$. Moreover, $f$ satisfies a nonzero linear differential equation with coefficients in $\cup_{j \geq 1} \widetilde{\C}(z^{1/j})$, with respect to the derivation $\vartheta$, because it is a $\K$-linear combination of the $u_{i}$ and the $u_{i}$ themselves satisfy such equations. It follows from \cite[Theorem 1.3]{Be94} that $f$ belongs to $\K$. Hence, the entries of $u=P^{-1}(Pu)=P^{-1}(f,\phi(f),\dots,\phi^{n-1}(f))^{t}$ actually belong to $\K$, as expected.    
\end{proof}

\begin{lem}\label{lem sol form}
There exists $c \in \C^{\times}$ such that the difference system $\phi(Y)=c^{-1}A Y$ has a nonzero solution $u=(u_{1},\dots,u_{n})^{t}$ with coefficients in  $\widehat{\K}:=\cup_{j \geq 1} \C((z^{1/j}))$.
\end{lem}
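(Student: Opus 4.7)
The plan is to exhibit $u$ as a formal Puiseux series around $z=0$. Write $A(z) = z^{\nu}(A_{0} + A_{1}z + A_{2}z^{2} + \cdots)$ with $\nu\in\Z$, $A_{i}\in\C^{n\times n}$, and $A_{0}\neq 0$, and look for a solution of the form
\[
u(z) = z^{\alpha}\sum_{k\geq 0} u_{k}\, z^{k},\qquad \alpha \in \Q,\quad u_{0}\in\C^{n}\setminus\{0\}.
\]
Substituting into $\phi(u) = c^{-1}Au$ and matching the lowest-order terms forces $p\alpha = \nu + \alpha$, hence $\alpha = \nu/(p-1) \in \frac{1}{p-1}\Z$ (so that $u$ lies in $\C((z^{1/(p-1)})) \subset \widehat{\K}$), together with the spectral equation $A_{0}u_{0} = c\,u_{0}$. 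One therefore takes $c$ to be a nonzero eigenvalue of $A_{0}$ and $u_{0}$ a corresponding eigenvector.

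Suppose temporarily that $A_{0} \in \GL_{n}(\C)$. Comparing the coefficient of $z^{K}$ in $\phi(u) = c^{-1}Au$ for $K \geq 1$ yields the recurrence $A_{0} u_{K} = -\sum_{i\geq 1}A_{i}u_{K-i}$ when $p\nmid K$, and $A_{0} u_{p\ell} = c\,u_{\ell} - \sum_{i\geq 1}A_{i}u_{p\ell - i}$ when $K = p\ell$. In either case the right-hand side depends only on the already-computed $u_{0},\ldots,u_{K-1}$, and since $A_{0}$ is invertible, $u_{K}$ is uniquely determined. Iterating produces the desired nonzero formal solution $u \in \widehat{\K}^{n}$.

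The obstacle is that $A_{0}$ may fail to be invertible; in the worst case it is nilpotent and has no nonzero eigenvalue (e.g.\ $A = \bigl(\begin{smallmatrix}0 & 1 \\ z & 0\end{smallmatrix}\bigr)$ at $\nu=0$). To reduce to the invertible case, one first performs a preliminary diagonal gauge transformation $Y = Q Z$ with $Q = \operatorname{diag}(z^{e_{1}},\ldots,z^{e_{n}})\in\GL_{n}(\widehat{\K})$ and $e_{i}\in\Q$. The transformed system is $\phi(Z) = \widetilde{A}Z$ with $\widetilde{A}_{ij} = z^{e_{j} - p\,e_{i}}A_{ij}$. By a Newton-polygon argument on the valuations $v_{0}(A_{ij})$---the Mahlerian analog of the Turrittin--Levelt reduction---one can select the $e_{i}$ so that the leading matrix of $\widetilde{A}$ at $z=0$ lies in $\GL_{n}(\C)$. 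Running the previous construction for $\widetilde{A}$ and pulling back through $Q$ then yields a nonzero solution $u = Q\widetilde{u}$ of $\phi(Y) = c^{-1}AY$ in $\widehat{\K}^n$.

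The main difficulty lies in this last step: arranging a gauge transformation whose effect is to produce an invertible (or at least non-nilpotent) leading matrix. This is a nontrivial combinatorial/tropical step and is the crux of the Newton polygon theory for Mahler systems in the Mahlerian setting.
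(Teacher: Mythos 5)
There is a genuine gap, and it sits exactly at the step you yourself flag as the crux. Your argument in the case $A_{0}\in\GL_{n}(\C)$ is fine: take $c$ a nonzero eigenvalue of $A_{0}$, take $u_{0}$ a corresponding eigenvector, and the recursion determines all $u_{K}$. But the reduction to that case by a diagonal gauge $Q=\operatorname{diag}(z^{e_{1}},\dots,z^{e_{n}})$ is not merely left unproved --- it is false. Take $p=2$ and $A=\left(\begin{smallmatrix}0&1\\ z&1\end{smallmatrix}\right)\in\GL_{2}(\C(z))$. The $(1,1)$ entry of $\widetilde{A}=\phi(Q)^{-1}AQ$ is identically $0$, so its leading matrix can only be invertible if the $(1,2)$ and $(2,1)$ entries both realize the minimal valuation; these valuations are $e_{2}-pe_{1}$ and $1+e_{1}-pe_{2}$, and equating them forces $e_{2}-e_{1}=\frac{1}{p+1}$, which in turn makes the valuation $(1-p)e_{2}$ of the $(2,2)$ entry strictly smaller (by $\frac{p}{p+1}$). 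Hence for every choice of $e_{1},e_{2}$ the leading matrix is singular. The obstruction is structural: this system has two distinct slopes (the Newton polygon of the associated scalar equation $y(z^{p^{2}})=y(z^{p})+zy(z)$ has vertices $(1,1)$, $(p,0)$, $(p^{2},0)$), and no shearing --- indeed no gauge over $\widehat{\K}$ --- turns a multi-slope system into one of the form $z^{\nu}(B_{0}+o(1))$ with $B_{0}\in\GL_{n}(\C)$. The normal form you are trying to reduce to simply does not exist in general.

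The paper instead invokes the weaker, but actually attainable, normal form: by \cite[Section~4]{Ro15} there exists $\widehat{P}\in\GL_{n}(\widehat{\K})$ such that $\phi(\widehat{P})^{-1}A\widehat{P}$ is upper triangular. Writing its $(1,1)$ entry as $cz^{m}l$ with $l\in 1+z^{1/k}\C[[z^{1/k}]]$, the scalar equation $\phi(y)=z^{m}l\,y$ has the explicit solution $f=z^{m/(p-1)}\prod_{j\geq 0}\phi^{j}(l)^{-1}\in\widehat{\K}$, and $u:=\widehat{P}\,(f,0,\dots,0)^{t}$ solves $\phi(Y)=c^{-1}AY$. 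Triangularization along the slope filtration is the correct Mahlerian analogue of the Turrittin--Levelt reduction that you were reaching for; full reduction to an invertible leading matrix is strictly stronger and fails. To make your approach work you would have to prove (or cite) such a triangularization theorem; your sketch does not supply it.
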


\begin{proof} 
According to \cite[Section 4]{Ro15}, the system $\phi (Y)=AY$ is triangularizable over $\widehat{\K}$, {\it i.e.}, there exists $\widehat{P} \in \GL_{n}(\widehat{\K})$ such that $\phi(\widehat{P})^{-1}A\widehat{P}=:(v_{i,j})_{1 \leq i,j \leq n}$ is upper-triangular. Let $c \in \C^{\times}$, $m \in \Z$ and $l \in 1+z^{1/k}\C[[z^{1/k}]]$ be such that ${v_{1,1}=cz^{m}l}$. We consider $A_{1}=c^{-1}A \in \GL_{n}(\C(z))$. Then, the system $\phi (Y)=A_{1}Y$ has a nonzero solution with entries in $\widehat{\K }$, namely $u=(u_{1},\dots,u_{n})^{t}:=\widehat{P} (f,0,\dots,0)^{t}$ with $f:=z^{\frac{m}{p-1}} \prod_{j \geq 0} \phi^{j}(l)^{-1}$.  
\end{proof}

\begin{proof}[Proof of Theorem \ref{theo hyper alg}]

We let $c \in \C^{\times}$ and $u=(u_{1},\dots,u_{n})^{t}$ be as in Lemma~\ref{lem sol form}, and we set $A_{1}:=c^{-1}A \in \GL_{n}(\C(z))$. Thanks to Lemma~\ref{lem:ppvformalsol}, we can consider a PPV ring $S_{1}$ for $\phi(Y)=A_{1}Y$ over $\L$ that contains $\L\{ u_{1},\dots,u_{n}\}$. We let $U_{1} \in \GL_{n}(S_{1})$ be a fundamental matrix of solutions  of $\phi (Y)=A_{1}Y$ whose first column is $u$. 

We let $G$ denote the difference Galois group of $\phi (Y)=AY$ over the $\phi$-field $\K$, and we let $G^{\delta}$ denote its parametrized difference Galois group over the $\pd$-field $\L$. 
Similarly, we let $G_{1}$ denote the difference Galois group of $\phi (Y)=A_{1}Y$ over the $\phi$-ring $\K$, and we let $G_{1}^{\delta}$ denote its parametrized difference Galois group over the $\pd$-field $\L$. 

We have $G_{1}^{der}=G^{der}=\SL_{n}(\C)$, so $G_{1}$ contains $\SL_{n}(\C)$. Moreover, the parametrized difference Galois group of $\phi (y) = \det (A_{1}) y=c^{-n} \det (A) y$ over $\L$ is a subgroup of $\C^{\times}$ (because 
the parametrized difference Galois group of $\phi (y) = \det (A) y$ over $\L$ is a subgroup of $\C^{\times}$ by hypothesis, and the parametrized difference Galois group of $\phi (y) = c^{-n} y$ over $\L$ satisfies the same property). 

We claim that $G_{1}^{\delta}$ is a subgroup of $\C^{\times} \SL_{n}(\widetilde{\C})$ that contains $ \SL_{n}(\widetilde{\C})$. Indeed, according to Lemma \ref{propo:alternative hyperalg}, it is sufficient to prove that there is no $B\in \K^{n \times n}$ such that 
$\vartheta (A_{1})=p\phi (B) A_{1}-A_{1}B.$
Suppose to the contrary that such a $B$ exists. 
The equation $\vartheta (A_{1})=p\phi (B) A_{1}-A_{1}B$, which can be rewritten as $\delta (A_{1})=\phi (\log(z)B) A_{1}-A_{1}(\log(z)B)$, ensures the integrability of the system of equations 
$$
\begin{cases}
\phi (Y)=A_{1}Y\\ 
\delta (Y)= (\log(z) B) Y.
\end{cases} 
$$
So, there exists $D\in \mathrm{GL}_{n}(\widetilde{\C})$ such that $V:=U_{1}D \in \GL_{n}(S_{1})$ satisfies 
$$
\begin{cases}
\phi(V)=A_{1}V \\ 
\d(V)=(\log(z)B)V, \text{ {\it i.e.}, } \vartheta (V)=BV.
\end{cases} 
$$

Hence, we have the equalities $\vartheta(U_{1})D+U_{1}\vartheta(D)=\vartheta(U_1D)=\vartheta(V) =BV=BU_{1}D$ so $\vartheta(U_{1})=BU_{1}-U_{1}\vartheta(D)D^{-1}$. 
This formula implies that the (finite dimensional) $\cup_{j \geq 1} \widetilde{\C}(z ^{1/j})$-vector space generated by the entries of $U_{1}$ is stable by $\vartheta$. In particular, any $u_{i}$ (recall that the $u_{i}$ are the entries of the first column of $U$) satisfies a nonzero linear differential equation with coefficients in $\cup_{j \geq 1} \widetilde{\C}(z ^{1/j})$, with respect to the derivation $\vartheta$. 
It follows from Lemma~\ref{sol commune} that the $u_{i}$ belong to $\K$. Hence, the first column of $U_{1}$ is fixed by the Galois group $G_{1}$ and this contradicts the fact that $G_{1}$ contains $\SL_{n}(\C)$. 

Therefore, $(G^{\delta})^{der}=(G_{1}^{\delta})^{der}$ contains $\SL_{n}(\widetilde{\C})$. Now, the theorem follows from Lemma~\ref{propo:alternative hyperalg}. 
\end{proof}

%%%%%%%%%%%%%%%%%%%%%%%%%%%%%%%%%%%%%%%%%%%%%%%%%%%%%%%%%%%%%%%%%%%%%%%%%
%%%%%%%%%%%%%%%%%%%%%%%%%%%%%%%%%%%%%%%%%%%%%%%%%%%%%%%%%%%%%%%%%%%

\subsubsection{Hypertranscendental determinant}

In the case of an hypertranscendental determinant, we can reduce the computation of the parametrized difference Galois group 
to a question concerning the existence of a rational solution of a given Mahler equation as follows.

\begin{lem}\label{lem:hypertranscontainsSLdethypertrans}
Assume that the assumption (\ref{hypothese gros groupe}) holds and that $d$ is 
hypertranscendental over $(\C(z),\theta)$ (or, equivalently, that the parametrized difference Galois group of $\phi(y)=\det (A) y$ over $\L$ is equal to $\widetilde{\C}^{\times}$). Then, we have the following alternative:
\begin{enumerate}
\item $\Galdelta(\cQ_S/\L)$ is conjugate to $\widetilde{\C}^{\times}\SL_{n}(\C)$; 
\item $\Galdelta(\cQ_S/\L)$ is equal to a $\GL_{n}(\widetilde \C)$.
\end{enumerate}
Moreover, the first case holds if and only if there exists $B \in \K^{n \times n}$ such that 
\begin{equation}\label{integ hypertran}
p\phi(B)=ABA^{-1}+\vartheta(A)A^{-1}-\frac{1}{n}\vartheta(\det(A))\det(A)^{-1}I_{n}.
\end{equation}
\end{lem}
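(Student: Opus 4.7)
The plan is to apply Proposition~\ref{propo:projisomonodgencondition} with the $\pd$-field $(\K',\phi,\delta)$ playing the role of the base field $\K$ there (and with the same $\L$), and then to descend the resulting $(\K')^{n\times n}$-integrability condition to a $\K^{n\times n}$-condition involving $\vartheta$ in place of $\delta$. The set-up matches the proposition: $\K'^{\phi}=\C$ is algebraically closed, $\widetilde{\C}$ is a $\delta$-closure of $\C$, and $\L=\operatorname{Frac}(\widetilde{\C}\otimes_{\C}\K')$ is the corresponding extension.

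The first step is to verify the two hypotheses of Proposition~\ref{propo:projisomonodgencondition} in this set-up. Combining Assumption~\ref{hypothese gros groupe} with Corollary~\ref{coro contient SL} shows that the classical difference Galois group of $\phi(Y)=AY$ over $\K'$ contains $\SL_{n}(\C)$. The assumption on $d$ is equivalent, by Proposition~\ref{propo:mahlerrang1homoge}, to the parametrized difference Galois group of $\phi(y)=\det(A)y$ over $\L$ being all of $\widetilde{\C}^{\times}$. Proposition~\ref{propo:projisomonodgencondition} then immediately gives the stated alternative, together with the criterion that case (1) holds if and only if there exists $B\in(\K')^{n\times n}$ satisfying
\begin{equation}\label{integ hypertran Lversion}
\phi(B)=ABA^{-1}+\delta(A)A^{-1}-\frac{1}{n}\delta(\det(A))\det(A)^{-1}I_{n}.
\end{equation}

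It then remains to show that \eqref{integ hypertran Lversion} admits a solution in $(\K')^{n\times n}$ if and only if \eqref{integ hypertran} admits one in $\K^{n\times n}$; this is a descent argument entirely parallel to the one carried out in the proof of Lemma~\ref{propo:alternative hyperalg}. One direction is immediate: if $B\in\K^{n\times n}$ solves \eqref{integ hypertran}, then $\log(z)\,B\in(\K')^{n\times n}$ solves \eqref{integ hypertran Lversion}, as follows at once from $\phi(\log z)=p\log z$ and $\delta=\log(z)\vartheta$. For the converse, I would write a solution $B\in(\K')^{n\times n}$ as $u(z^{1/k},\log z)$ with $u(X,Y)\in\C(X,Y)^{n\times n}$ and $k\ge 1$, expand $u(X,Y)=\sum_{j\ge -N}u_{j}(X)Y^{j}$ in $\C(X)((Y))^{n\times n}$, and use the algebraic independence of $z^{1/k}$ and $\log z$ over $\C$ to split \eqref{integ hypertran Lversion} into a system of equations indexed by $j$. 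Since $A\in\GL_{n}(\C(z))$, both $\delta(A)A^{-1}$ and $\delta(\det A)\det(A)^{-1}$ are of the form $\log(z)$ times an element of $\K$, so the inhomogeneous term on the right hand side of \eqref{integ hypertran Lversion} only contributes to the coefficient of $Y^{1}$. Extracting that coefficient yields an element $B_{1}:=u_{1}(z^{1/k})\in\K^{n\times n}$ satisfying \eqref{integ hypertran}, as required.

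The main hurdle is entirely in the descent step, but it is essentially formal once one observes the $\log(z)$-factorization of the inhomogeneous term. No new Galois-theoretic ingredient beyond Proposition~\ref{propo:projisomonodgencondition} and Corollary~\ref{coro contient SL} is needed, and the overall structure of the proof closely mirrors that of Lemma~\ref{propo:alternative hyperalg}.
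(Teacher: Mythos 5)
Your proposal is correct and follows essentially the same route as the paper: the paper's proof likewise invokes Proposition~\ref{propo:projisomonodgencondition} (over $\K'$, with $\L$ as the extended base) and then reduces to showing that the $\delta$-integrability condition has a solution in $(\K')^{n\times n}$ if and only if \eqref{integ hypertran} has one in $\K^{n\times n}$, citing the descent argument of Lemma~\ref{propo:alternative hyperalg}, which you have carried out explicitly (and correctly, including the factor $p$ coming from $\phi(\log z)=p\log z$ and the observation that the inhomogeneous term only contributes to the coefficient of $\log(z)^{1}$).
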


\begin{proof}
Note that $d$ is hypertranscendental over $(\L,\delta)$. Using Proposition \ref{propo:projisomonodgencondition}, it remains to prove that the equation 
\begin{equation}\label{integ hypertran sur L}
\phi(B)=ABA^{-1}+\delta(A)A^{-1}-\frac{1}{n}\delta(\det(A))\det(A)^{-1}I_{n}
\end{equation}
has a solution $B \in \K'^{n \times n} $ if and only if the equation (\ref{integ hypertran}) has a solution $B \in \K^{n \times n}$. The proof of this fact is similar to the proof of  Lemma \ref{propo:alternative hyperalg}.
\end{proof}

Unlike to the situation of Section \ref{subsubsec:hyperalgdet},  it is not completely obvious that we can bypass the search of rational solutions 
of \eqref{integ hypertran} to decide which of the two options of Lemma~\ref{lem:hypertranscontainsSLdethypertrans} is satisfied. However, we can still get directly  some 
informations on the hypertranscendence of  solutions in $\cup_{j \geq 1} \C(z ^{1/j})$ as follows.

\begin{theo}\label{theo hyper trans}
Assume that the assumption (\ref{hypothese gros groupe}) holds and that $d$ is hypertranscendental over $(\C(z),\vartheta)$. Assume that the difference system $\phi(Y)=AY$ admits a nonzero solution ${u =(u_{1},\dots,u_{n})^t}$ with coefficients in $\C((z^{1/k}))$ for some integer $k \geq 1$. Then, at least one of the $u_{i}$ is hypertranscendental over $(\C(z),\vartheta)$.
\end{theo}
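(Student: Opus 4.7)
The plan is to argue by contradiction using the parametrized Galois correspondence together with a $\delta$-dimension count. Suppose for contradiction that every $u_{i}$ is hyperalgebraic over $(\C(z),\vartheta)$. Since $\log(z)\in\L^{\times}$ and $\delta=\log(z)\vartheta$, each $u_{i}$ is also $\delta$-algebraic over $\L$. By Lemma \ref{lem:ppvformalsol}, I choose a PPV ring $S$ over $\L$ for $\phi(Y)=AY$ containing $\L\{u_{1},\dots,u_{n}\}$, together with a fundamental matrix of solutions $U\in\GL_{n}(S)$ whose first column is $u$. Let $F\subset\cQ_S$ denote the $\pd$-subring generated over $\L$ by $u_{1},\dots,u_{n}$, localized at its nonzero divisors so as to fit the framework of Proposition \ref{propo:ppvcorres}. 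The hyperalgebraicity assumption then gives $\delta\text{-}\mathrm{trdeg}(F/\L)=0$.

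Writing $\sigma(U)=UM_{\sigma}$ for each $\sigma\in G^{\delta}:=\Galdelta(\cQ_S/\L)$, one checks that the subgroup $H:=\Galdelta(\cQ_S/F)$ consists exactly of those $\sigma$ with $M_{\sigma}e_{1}=e_{1}$. Proposition \ref{propo:ppvcorres} yields $\cQ_S^{H}=F$ and, combined with additivity of $\delta$-transcendence degree,
\[
\delta\text{-}\mathrm{trdeg}(F/\L)=\delta\text{-}\mathrm{dim}(G^{\delta})-\delta\text{-}\mathrm{dim}(H).
\]
Now I invoke the dichotomy of Lemma \ref{lem:hypertranscontainsSLdethypertrans}. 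In Case (1), after replacing $U$ by $UC^{-1}$ for a suitable $C\in\GL_{n}(\widetilde{\C})$, one may assume $G^{\delta}=\widetilde{\C}^{\times}\SL_{n}(\C)$, of $\delta$-dim $1$, and the first column of the new fundamental matrix becomes $\tilde u:=Ce_{1}\in\widetilde{\C}^{n}\setminus\{0\}$. For $\lambda M'\in H$ with $\lambda\in\widetilde{\C}^{\times}$ and $M'\in\SL_{n}(\C)$, the equation $M'\tilde u=\lambda^{-1}\tilde u$ forces $\lambda^{-1}$ to be a root of the characteristic polynomial of $M'$, whose coefficients lie in $\C$; since $\C$ is algebraically closed, $\lambda\in\C^{\times}$, so $H\subset\C^{\times}\SL_{n}(\C)=\GL_{n}(\C)$ has $\delta$-dim $0$. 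The formula then gives $\delta\text{-}\mathrm{trdeg}(F/\L)=1>0$, a contradiction. In Case (2), $G^{\delta}=\GL_{n}(\widetilde{\C})$ and $H$ is the ordinary algebraic stabilizer of $e_{1}$, defined by $n$ polynomial equations free of $\delta$, so $\delta\text{-}\mathrm{dim}(H)=n^{2}-n$, giving $\delta\text{-}\mathrm{trdeg}(F/\L)=n\geq 1>0$, again a contradiction.

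The delicate technical point is the correct formulation of the parametrized Galois correspondence and of the additivity of $\delta$-transcendence degree in this setting, since $\cQ_S$ is in general only a finite direct product of fields rather than a field; this is handled by localizing $F$ at its nonzero divisors inside $\cQ_S$ before applying Proposition \ref{propo:ppvcorres}. Once this machinery is in place, the core of the argument is the elementary linear-algebra observation of Case (1): the stabilizer inside $\widetilde{\C}^{\times}\SL_{n}(\C)$ of any non-zero vector of $\widetilde{\C}^{n}$ is automatically defined over the constants $\C$, which suffices to drop the $\delta$-dimension strictly below that of $G^{\delta}$.
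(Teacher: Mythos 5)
Your proof is correct in substance, but it follows a genuinely different route from the paper's. The paper argues via \emph{integrability}: if all $u_i$ were hyperalgebraic, Lemma \ref{lem:hypertranscontainsSLdethypertrans} produces a rational matrix $B$ making the pair $\phi(Y)=AY$, $\vartheta(Y)=(B+\tfrac{\vartheta(d)}{nd})Y$ compatible; each $u_i$ then satisfies a linear $\vartheta$-equation over $F\langle d\rangle$ with $F=\cup_j\widetilde{\C}(z^{1/j})$, the linear disjointness Lemma \ref{lem:nonforking generics} (using the hypertranscendence of $d$) descends this to an equation over $F$, and Becker's theorem (via Lemma \ref{sol commune}) then forces $u_i\in\K$, contradicting $\SL_n(\C)\subset G$. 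You instead use only the group-theoretic half of Lemma \ref{lem:hypertranscontainsSLdethypertrans} and run a $\delta$-dimension count on the intermediate extension $F=\L\langle u_1,\dots,u_n\rangle$, whose Galois group $H$ is the stabilizer of the coordinate vector of $u$; the elementary observation that a nonzero eigenvector relation $M'\tilde u=\lambda^{-1}\tilde u$ with $M'\in\SL_n(\C)$ forces $\lambda\in\C^\times$ gives $\delta\text{-}\mathrm{dim}(H)<\delta\text{-}\mathrm{dim}(G^\delta)$ in both cases of the dichotomy. This avoids Becker's theorem, the construction of the integrable system, and Lemma \ref{lem:nonforking generics} altogether, and is in the same spirit as the dimension count the paper uses in the proof of Theorem \ref{user friend theo syst}(2). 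Two small points to tidy up: the sentence ``the first column of the new fundamental matrix becomes $\tilde u:=Ce_1$'' is a slip --- the correct statement is $u=(UC^{-1})(Ce_1)$, so that $H$ is identified with the stabilizer of $Ce_1$ in the conjugated representation (which is what you actually use); and the identity $\delta\text{-}\mathrm{trdeg}(F/\L)=\delta\text{-}\mathrm{dim}(G^\delta)-\delta\text{-}\mathrm{dim}(H)$ requires knowing that $\cQ_S$ is a total PPV extension of $F$ with group $H$ so that \cite[Proposition 6.26]{HS} applies over $F$ as well --- this holds because $F^\phi=\widetilde{\C}$ and, as the paper notes, $G^\delta$ is Kolchin-connected here, so $S$ is a domain and $\cQ_S$ is a field (which also makes your localization precaution unnecessary).
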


Note the following immediate corollary, which is particularly interesting when one works with difference equations rather than with difference systems. 

\begin{coro}\label{coro hyper trans}
Assume that the assumption (\ref{hypothese gros groupe}) holds and that $d$ is hypertranscendental over $(\C(z),\vartheta)$. Assume that the difference system $\phi(Y)=AY$ admits a nonzero solution ${u =(f,\phi(f),\dots,\phi^{n-1}(f))^t}$ for some $f \in \C((z^{1/k}))$ and some integer $k \geq 1$. Then, $f$ (and, hence, any $\phi^{i}(f)$) is hypertranscendental over $(\C(z),\vartheta)$.
\end{coro}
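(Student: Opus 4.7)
The plan is to reduce the corollary to Theorem~\ref{theo hyper trans} and then transfer hypertranscendence between $f$ and its iterates $\phi^{i}(f)$. Since the hypotheses of the corollary are precisely those of Theorem~\ref{theo hyper trans} applied to the specific solution vector $u=(f,\phi(f),\dots,\phi^{n-1}(f))^{t}$ (which has entries in $\C((z^{1/k}))$ because $f$ does, and $\phi$ preserves this ring up to passing to a larger $k$), the theorem gives at least one index $i\in\{0,\dots,n-1\}$ such that $\phi^{i}(f)$ is hypertranscendental over $(\C(z),\vartheta)$.

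The remaining step is to show that hyperalgebraicity is preserved under $\phi$ and $\phi^{-1}$ (interpreted appropriately), so that hypertranscendence of any single $\phi^{i}(f)$ forces hypertranscendence of $f$. The key ingredient is the commutation relation $\vartheta\circ\phi = p\,\phi\circ\vartheta$ on $\K$, which gives $\phi(\vartheta^{k}f)=p^{-k}\vartheta^{k}\phi(f)$. Thus, if $f$ satisfies a nonzero $\vartheta$-differential polynomial relation $P(f,\vartheta f,\dots,\vartheta^{m}f)=0$ with coefficients in $\C(z)$, applying $\phi$ produces a nonzero $\vartheta$-differential polynomial relation (with coefficients in $\phi(\C(z))=\C(z^{p})\subset\C(z)$ and monomials in the $\vartheta^{k}\phi(f)$) satisfied by $\phi(f)$; hence $\phi(f)$, and by iteration every $\phi^{i}(f)$, is hyperalgebraic over $(\C(z),\vartheta)$ as soon as $f$ is.

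For the converse direction, a nonzero relation satisfied by $\phi(f)$ over $\C(z)$ yields, after the formal substitution $z\mapsto z^{1/p}$ (using the same commutation identity), a nonzero $\vartheta$-differential polynomial relation satisfied by $f$ with coefficients in $\C(z^{1/p})$, i.e., $f$ is hyperalgebraic over a finite algebraic extension of $\C(z)$ sitting inside $\K$. Since algebraic extensions preserve hyperalgebraicity (elements algebraic over $\C(z)$ are in particular hyperalgebraic over $\C(z)$, and hyperalgebraic dependence is transitive), $f$ is hyperalgebraic over $\C(z)$ itself. Combining the two directions, the hypertranscendence of the distinguished $\phi^{i}(f)$ provided by Theorem~\ref{theo hyper trans} forces $f$ to be hypertranscendental over $(\C(z),\vartheta)$, and then, by the forward direction again, every $\phi^{j}(f)$ is as well.

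The routine point here is the transitivity/descent argument for hyperalgebraicity across the finite algebraic extension $\C(z^{1/p})/\C(z)$; no genuine obstacle arises, as the real content is entirely absorbed into Theorem~\ref{theo hyper trans}.
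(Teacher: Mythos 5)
Your argument is correct and is exactly the one the paper intends: the paper states this as an ``immediate corollary'' of Theorem~\ref{theo hyper trans} without further proof, and the content you supply --- applying the theorem to $u=(f,\phi(f),\dots,\phi^{n-1}(f))^{t}$ and then transferring hypertranscendence between $f$ and $\phi^{i}(f)$ via the commutation $\vartheta\circ\phi=p\,\phi\circ\vartheta$ together with descent through the finite extension $\C(z^{1/p})/\C(z)$ --- is precisely the routine verification being left to the reader.
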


The arguments employed in the proof of Theorem \ref{theo hyper trans} given below are  very similar to the ones used in the hyperalgebraic case.
 But, we need a new descent argument, that is contained in the following lemma.

 \begin{lem} \label{lem:nonforking generics}
Let $L$ be a $\delta$-field and let $L\langle a \rangle$ and $L\langle b_1,\dots,b_n \rangle$ be two $\delta$-field extensions of $L$, both contained in a same $\delta$-field extension of $L$. Assume
 that $a$ is hypertranscendental over $L$ and that any $b_i$ is hyperalgebraic over $L$. Then, the 
 field extensions $L\langle a \rangle$ and $L\langle b_1,\dots,b_n \rangle$ are linearly disjoint over $L$.
  \end{lem}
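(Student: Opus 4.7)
The plan is to promote $a$ from being hypertranscendental over $L$ to being hypertranscendental over the larger $\delta$-field $K:=L\langle b_1,\ldots,b_n\rangle$ itself, and then to deduce linear disjointness from the resulting algebraic independence of $a,\delta(a),\delta^2(a),\ldots$ over $K$.

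For the promotion step, I would invoke the additivity of the differential transcendence degree in towers of $\delta$-field extensions (a standard fact from Kolchin's differential algebra). Each $b_i$ being hyperalgebraic over $L$ means $\dtrdeg(L\langle b_i\rangle/L)=0$, and iterated application yields $\dtrdeg(K/L)=0$. If $a$ were hyperalgebraic over $K$, we would have $\dtrdeg(K\langle a\rangle/K)=0$, and hence $\dtrdeg(K\langle a\rangle/L)=0$ by additivity. However, $L\langle a\rangle\subset K\langle a\rangle$ and $\dtrdeg(L\langle a\rangle/L)=1$ by hypothesis, contradicting the monotonicity of $\dtrdeg$ in the upper field. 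Thus $a$ is hypertranscendental over $K$, \emph{i.e.}, $a,\delta(a),\delta^2(a),\ldots$ are algebraically independent over $K$.

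To deduce linear disjointness, let $u_1,\ldots,u_r\in K$ be $L$-linearly independent and assume for contradiction that $\sum_{i=1}^r \lambda_i u_i=0$ for some $\lambda_1,\ldots,\lambda_r\in L\langle a\rangle=L(a,\delta(a),\delta^2(a),\ldots)$ not all zero. Clearing denominators produces polynomials $p_i\in L[X_0,\ldots,X_N]$, not all zero, with $\sum_i p_i(a,\delta(a),\ldots,\delta^N(a))\,u_i=0$. Expanding $p_i=\sum_\alpha c_{i,\alpha}X^\alpha$ with $c_{i,\alpha}\in L$ and regrouping, this becomes
\[
\sum_\alpha \Bigl(\sum_{i=1}^r c_{i,\alpha}u_i\Bigr)\,a^{\alpha_0}\delta(a)^{\alpha_1}\cdots\delta^N(a)^{\alpha_N}=0.
\]
The algebraic independence of $a,\delta(a),\ldots,\delta^N(a)$ over $K$ established in the previous step, together with the fact that each coefficient $\sum_i c_{i,\alpha}u_i$ lies in $K$, forces all these coefficients to vanish; the $L$-linear independence of the $u_i$ then yields $c_{i,\alpha}=0$ for every $i,\alpha$, contradicting $p_i\neq 0$. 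The only real conceptual point is the promotion in the first step; once hypertranscendence has been lifted from $L$ to $K$ via the additivity of $\dtrdeg$, the passage from algebraic independence of the $\delta^i(a)$ to linear disjointness is a routine monomial-matching argument.
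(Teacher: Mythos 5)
Your proof is correct and rests on the same two ingredients as the paper's: the additivity of the differential transcendence degree in towers (to pass hypertranscendence of $a$ from $L$ up to $L\langle b_1,\dots,b_n\rangle$), and the standard equivalence, for the purely $\delta$-transcendental extension $L\langle a\rangle=L(a,\delta(a),\dots)$, between linear disjointness from $L\langle b_1,\dots,b_n\rangle$ and algebraic independence of the $\delta^i(a)$ over that field. The paper runs this by contraposition and leaves the second equivalence implicit, whereas you argue directly and spell out the monomial-matching step; the content is the same.
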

\begin{proof}
If $L\langle a \rangle$ and $L\langle b_1,\dots,b_n \rangle$ are not  linearly disjoint over $L$ then $a$ is hyperalgebraic over $L\langle b_1,\dots,b_n \rangle$. This implies that the differential transcendence degree of the field $L\langle a, b_1,\dots,b_n \rangle$ over $L\langle b_1,\dots,b_n \rangle$ is zero. Since the differential transcendence degree of $L\langle b_1,\dots,b_n \rangle$ over $L$ is also zero, by hypothesis,
we find that the differential transcendence degree of $L\langle a, b_1,\dots,b_n \rangle$ over $L$ is zero by classical properties
of the transcendence degree. This implies that $a$ is hyperalgebraic over $L$. 
\end{proof}

\begin{proof}[Proof of Theorem \ref{theo hyper trans}]

 Thanks to Lemma~\ref{lem:ppvformalsol}, we can assume that the PPV ring $S$ for $\phi(Y)=AY$ over $\L$ contains $\L\{ u_{1},\dots,u_{n}\}$. We can assume that the first column of the fundamental matrix of solutions $U \in \GL_{n}(S)$ of $\phi (Y)=AY$ is $u$. 
 
We let $G$ denote the difference Galois group of $\phi (Y)=AY$ over the $\phi$-field $\K$, and we let $G^{\delta}$ denote its parametrized difference Galois group over the $\pd$-ring $\L$.  Since $d$ is hypertranscendental over $\L$, the parametrized difference Galois group of $\phi (y) = \det (A) y$ over $\L$ is   $\widetilde{\C}^{\times}$.

Note that Lemma \ref{lem:hypertranscontainsSLdethypertrans} implies that $G^{\delta}$ is Kolchin-connected. So, $S$ is an integral domain.

We claim that at least one of the $u_i$ is hypertranscendental over $\L$.  Suppose to the  contrary that all of them are hyperalgebraic. In particular, $G^{\delta}$ is a strict subgroup of $\GL_{n}(\widetilde{\C})$. Lemma \ref{lem:hypertranscontainsSLdethypertrans} ensures that there exists $B \in \K^{n \times n}$ such that 
\begin{equation}
p\phi(B)=ABA^{-1}+\vartheta(A)A^{-1}-\frac{1}{n}\vartheta(\det(A))\det(A)^{-1}I_{n}.
\end{equation}
This equation can be rewritten as 
$$ \phi(B_0)=A B_0 A^{-1} + \delta(A)A^{-1} -\frac{1}{n}\delta(\det(A))\det(A)^{-1}I_{n},$$
where $B_0=\log(z) B$. Set $B_1:=B_0+ \frac{\delta(d)}{nd}$. Note that 
$$
\phi(B_1)=A B_1A^{-1} +\delta(A)A^{-1}.
$$
This equation ensures the integrability of the system of equations 
$$
\begin{cases}
\phi (Y)=AY\\ 
\delta (Y)=  B_1 Y.
\end{cases} 
$$
So, there exists $D\in \mathrm{GL}_{n}(\widetilde{\C})$ such that $V:=UD \in \GL_{n}(S)$ satisfies 
$$
\begin{cases}
\phi(V)=AV \\ 
\d(V)=B_1 V, \text{ {\it i.e.}, } \vartheta(V)=(B+ \frac{\vartheta(d)}{nd})V.
\end{cases} 
$$

In particular, we have $\vartheta(U)D+U\vartheta(D)=\vartheta(U_1D)=\vartheta(V) =(B  + \frac{\vartheta(d)}{nd})  UD$ so $$\vartheta(U)=  (B  + \frac{\vartheta(d)}{nd}) U-U\vartheta(D)D^{-1}.$$ 

If we set $F=\cup_{j \geq 1} \widetilde{\C}(z ^{1/j})$, the previous formula implies that the  $F\langle d\rangle$\footnote{Here, $F\langle d \rangle$ denotes the $\vartheta$-field extension generated by $d$ over $F$.}-vector subspace of $\mathcal{Q}_{S}$ generated by the entries of $U$ and all their successive $\vartheta$-derivatives is of finite dimension. In particular, any $u_{i}$ satisfies a nonzero linear differential equation $\mathcal{L}_i(y)=0$ with coefficients in $F\langle d\rangle$, with respect to the derivation $\vartheta$. We can assume that the coefficients of $\mathcal{L}_i(y)$ belong to $F\{ d\}$. We write $\mathcal{L}_i(y) =\sum_\alpha L_{i,\alpha}(y) d_\alpha$ where $L_{i,\alpha}(y)$ is a linear differential operator with coefficients in $F$, with respect to the derivation $\vartheta$,
and $d_\alpha$ is a monomial in the $\vartheta^i(d)$'s. By Lemma~\ref{lem:nonforking generics},  the $\vartheta$-fields $F\langle d\rangle$ and $F\langle u_{1},...,u_{n}\rangle$ are linearly disjoint over $F$. It follows easily that there exists some nonzero $L_{i,\alpha}(y)$ such that $L_{i,\alpha}(u_i)=0$. Therefore, any $u_{i}$ satisfies a nonzero linear differential equation with coefficients in $F$, with respect to the derivation $\vartheta$. It follows from Lemma~\ref{sol commune} that the $u_{i}$ belong to $\K$. Hence, the first column of $U$ is fixed by the difference Galois group $G$ and this contradicts the fact that $G$ contains $\SL_{n}(\C)$. 
\end{proof}

%%%%%%%%%%%%%%%%%%%%%%%%%%%%%%%%%%%%%%%%%%%%%%%%%%%%%%%%%%%%%%%%%%%
%%%%%%%%%%%%%%%%%%%%%%%%%%%%%%%%%%%%%%%%%%%%%%%%%%%%%%%%%%%%%%%%%%
\section{Applications}\label{sec applications}
%%%%%%%%%%%%%%%%%%%%%%%%%%%%%%%%%%%%%%%%%%%%%%%%%%%%%%%%%%%%%
%%%%%%%%%%%%%%%%%%%%%%%%%%%%%%%%%%%%%%%%%%%%%%%%%%%%%%%%%%%%

In this section, we will use the notation introduced at the beginning of Section~\ref{sec hyptr}. 

%%%%%%%%%%%%%%%%%%%%%%%%%%%%%%%%%%%%%%%%%%%%%%%%%%%%%%%%%%%%%%%%%%%%%%%%%%%%%%%%%%%%%%%%
%%%%%%%%%%%%%%%%%%%%%%%%%%%%%%%%%%%%%%%%%%%%%%%%%%%%%%%%%%%%%%%%%%%%%%%%%%%%%%%%%%%%%%%
\subsection{User-friendly hypertranscendence criteria}\label{user friend hyp crit}

Consider the Mahler system 
\begin{equation}\label{equa generique user-friendly syst}
\phi (Y)=AY, \text{ with } A \in \GL_{n}(\C(z)).
\end{equation}

\begin{theo}\label{user friend theo syst}
Assume that the difference Galois group of the Mahler system (\ref{equa generique user-friendly syst}) over the $\phi$-field $\K$ contains $\SL_{n}(\C)$ and that $\det A(z)$ is a monomial. Then, the following properties hold:
\begin{enumerate}
\item The parametrized difference Galois group of the Mahler system (\ref{equa generique user-friendly syst}) over $\L$ is a subgroup of $\C^{\times} \SL_{n}(\widetilde{\C})$ containing $\SL_{n}(\widetilde{\C})$.
\item Let $u=(u_{1},...,u_{n})^{t}$ be a nonzero solution of (\ref{equa generique user-friendly syst}) with entries in $\C((z))$. Then, the series $u_{1},u_{2,}\ldots,u_{n}$ and all their successive derivatives are algebraically independent over $\C(z)$. In particular, any $u_{i}$ is hypertranscendental over $\C(z)$. 
\end{enumerate}

\end{theo}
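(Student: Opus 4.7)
For statement (1), I would first observe that since $\det A$ is a monomial, it has the form $cz^{m}$ with $c\in\C^{\times}$ and $m\in\Z$, so condition (4) of Proposition \ref{propo:mahlerrang1homoge} holds trivially (take $f=1$) for the order-one equation $\phi(y)=\det(A)y$. Equivalently, $d=\det(U)$ is hyperalgebraic over $(\C(z),\vartheta)$, and the parametrized difference Galois group of $\phi(y)=\det(A)y$ over $\L$ is contained in $\C^{\times}$. Combined with the hypothesis that the difference Galois group of (\ref{equa generique user-friendly syst}) over $\K$ contains $\SL_{n}(\C)$, Theorem \ref{theo hyper alg} then directly yields (1).

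For statement (2), I would use Lemma \ref{lem:ppvformalsol} to build a PPV ring $S$ for (\ref{equa generique user-friendly syst}) over $\L$ containing the $\delta$-algebra $\L\{u_{1},\ldots,u_{n}\}$, with a fundamental matrix $U\in\GL_{n}(S)$ whose first column is $u$. The parametrized Galois correspondence (Proposition \ref{propo:ppvcorres}) identifies $\L\langle u_{1},\ldots,u_{n}\rangle$ with the fixed field of the stabilizer $H:=\{C\in\Galdelta(\cQ_{S}/\L)\ |\ Ce_{1}=e_{1}\}$, and the $\delta$-transcendence degree of $\L\langle u_{1},\ldots,u_{n}\rangle$ over $\L$ equals the $\delta$-dimension of the orbit of $e_{1}$ under $\Galdelta(\cQ_{S}/\L)$. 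Since $\Galdelta(\cQ_{S}/\L)\supseteq \SL_{n}(\widetilde{\C})$ by (1) and the $\SL_{n}(\widetilde{\C})$-orbit of $e_{1}$ in $\widetilde{\C}^{n}$ equals $\widetilde{\C}^{n}\setminus\{0\}$ for $n\geq 2$, this orbit has $\delta$-dimension $n$. (The case $n=1$ is degenerate: the hypothesis ``$G\supset\SL_{1}(\C)$'' is vacuous and the conclusion is indeed false in general, so I would assume $n\geq 2$.) Consequently $u_{1},\ldots,u_{n}$ are $\delta$-algebraically independent over $\L$.

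Finally, I would transfer this independence from $\delta$ to the usual derivation: because $\delta=\log(z)\vartheta$ with $\log(z)\in\L^{\times}$, $\delta$-algebraic independence over $\L$ is equivalent to $\vartheta$-algebraic independence over $\L$; this implies $\vartheta$-algebraic independence over $\C(z)\subset\L$, and since $\vartheta=z\,d/dz$ with $z\in\C(z)^{\times}$, this is precisely the algebraic independence over $\C(z)$ of $u_{1},\ldots,u_{n}$ together with all their successive $d/dz$-derivatives. In particular every $u_{i}$ is hypertranscendental over $\C(z)$.

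The main obstacle will be to make the orbit / $\delta$-transcendence degree step precise in the differential Galois setting: one must invoke the Kolchin-differential version of the dimension identity $\dim G/H=\dim G-\dim H$ applied to the stabilizer $H$ of $e_{1}$, and verify that since $\SL_{n}(\widetilde{\C})\cdot e_{1}=\widetilde{\C}^{n}\setminus\{0\}$ is Kolchin-dense in $\widetilde{\C}^{n}$, the coordinates $u_{1},\ldots,u_{n}$ must have $\delta$-transcendence degree exactly $n$ over $\L$. Everything else is a straightforward combination of Theorem \ref{theo hyper alg}, Lemma \ref{lem:ppvformalsol}, and Proposition \ref{propo:ppvcorres}.
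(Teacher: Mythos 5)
Your proof of part (1) coincides with the paper's: Proposition \ref{propo:mahlerrang1homoge} shows a monomial determinant forces the parametrized Galois group of $\phi(y)=\det(A)y$ into $\C^{\times}$, and Theorem \ref{theo hyper alg} does the rest. For part (2) you take a genuinely different route. The paper argues by contradiction using only a global count: by \cite[Proposition 6.26]{HS} the $\delta$-transcendence degree of the PPV ring $S$ over $\L$ equals the $\delta$-dimension of $\Galdelta(\cQ_S/\L)$, which is $n^{2}-1$ by part (1); if the $u_{i}$ were hyperalgebraically dependent over $\L$, this dependence together with the hyperalgebraicity of $\det(U)$ would cut the $\delta$-transcendence degree of $S$ down to at most $n^{2}-2$, a contradiction. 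Your orbit--stabilizer argument instead computes the $\delta$-transcendence degree of the intermediate field $\L\langle u_{1},\dots,u_{n}\rangle$ directly as the $\delta$-dimension of the $\Galdelta(\cQ_S/\L)$-orbit of $e_{1}$, obtaining the sharper statement that this degree is exactly $n$. This is valid, but it is also the more demanding path: you must justify that $\cQ_S$ is again a total PPV ring over the fixed field of the stabilizer $H$ of $e_{1}$, invoke the additivity of $\delta$-transcendence degree in the tower $\L\subset\L\langle u_{1},\dots,u_{n}\rangle\subset\cQ_S$, and apply the Kolchin dimension formula for $G/H$ --- none of which is needed for the paper's one-line counting argument, which applies \cite[Proposition 6.26]{HS} only once, to the full ring. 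You correctly flag this as the delicate step, and you are also right that $n\geq 2$ is implicitly assumed in part (2) (the paper's own contradiction argument likewise degenerates at $n=1$, where the determinant relation and the column relation coincide). Your final transfer from $\delta$-independence over $\L$ to $d/dz$-independence over $\C(z)$ via $\delta=z\log(z)\,d/dz$ is exactly the intended reading of the conclusion.
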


\begin{proof}
The fact that $\det A(z)$ is a monomial ensures, in virtue of Proposition \ref{propo:mahlerrang1homoge}, that the parametrized difference Galois group of $\phi(y)=\det(A)y$ is included in $\C^{\times}$.  Theorem~\ref{theo hyper alg} yields the first assertion of the theorem. 

We claim that $u_{1},u_{2,}\ldots,u_{n}$ are hyperalgebraically independent over $\C(z)$. Suppose to the contrary that they are hyperalgebraically dependent over $\C(z)$. Thanks to Lemma~\ref{lem:ppvformalsol}, there exists a PPV ring $S$ for the system (\ref{equa generique user-friendly syst}) over $\L$ containing $\K'\{u_{1},u_{2},\ldots,u_{n} \}$. Let $U \in \GL_{n}(S)$ be a fundamental matrix of solutions of the system (\ref{equa generique user-friendly syst}) whose first column is $u$. Then, $\det(U)$ is hyperalgebraic over $\L$ and the elements of the first column of $U$ are hyperalgebraically dependent over $\L$. It follows easily that the $\delta$-transcendence degree of $S$ over $\L$ is lower than or equal to $n^{2}-2$. This contradicts the fact that the $\delta$-dimension of the parametrized difference Galois group of equation (\ref{equa generique user-friendly syst}) over $\L$, namely $n^{2}-1$, is equal to the $\delta$-transcendence degree of $S$ over $\L$ (see \cite[Proposition~6.26]{HS}).
\end{proof}

We shall now state a variant of the last theorem for Mahler equations. 
Consider the following Mahler equation 
\begin{equation}\label{equa generique user-friendly}
a_{n}(z) y(z^{p^{n}}) + a_{n-1}(z) y(z^{p^{n-1}}) + \cdots + a_{0}(z) y(z) = 0
\end{equation}
for some integers $p\geq 2$, $n \geq 1$, and some $a_{0}(z),\dots,a_{n}(z) \in \C(z)$ with $a_{0}(z)a_{n}(z)\neq 0$. 
In what follows, by ``difference Galois group of equation (\ref{equa generique user-friendly})'', we mean the difference Galois group of the associated system 
\begin{equation}\label{syst generique user-friendly}
\phi (Y)=AY, \text{ with } A=\begin{pmatrix}
0&1&0&\cdots&0\\
0&0&1&\ddots&\vdots\\
\vdots&\vdots&\ddots&\ddots&0\\
0&0&\cdots&0&1\\
-\frac{a_{0}}{a_{n}}& -\frac{a_{1}}{a_{n}}&\cdots & \cdots & -\frac{a_{n-1}}{a_{n}}
\end{pmatrix} \in \GL_{n}(\C(z)).
\end{equation}

\begin{theo}\label{user friend theo}
Assume that the difference Galois group over the $\phi$-field $\K$ of the Mahler equation (\ref{equa generique user-friendly}) contains $\SL_{n}(\C)$ and that $a_{n}(z)/a_{0}(z)$ is a monomial. Then, the following properties hold:
\begin{enumerate}
\item The parametrized difference Galois group of equation (\ref{equa generique user-friendly}) over $\L$ is a subgroup of $\C^{\times} \SL_{n}(\widetilde{\C})$ containing $\SL_{n}(\widetilde{\C})$.
\item Let $f(z) \in \C((z))$ be a nonzero solution of (\ref{equa generique user-friendly}). Then, the series $f(z), f(z^{p}),\ldots,f(z^{p^{n-1}})$ and all their successive derivatives are algebraically independent over $\C(z)$. In particular, $f(z)$ is hypertranscendental over $\C(z)$. 
\end{enumerate}
 
\end{theo}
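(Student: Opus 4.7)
The plan is to deduce Theorem \ref{user friend theo} as a direct consequence of Theorem \ref{user friend theo syst} applied to the companion system (\ref{syst generique user-friendly}) of the Mahler equation. The two hypotheses in Theorem \ref{user friend theo} are precisely what is needed to invoke Theorem \ref{user friend theo syst}, provided we can translate the monomial condition on $a_n/a_0$ into a monomial condition on $\det A$.

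First I would compute $\det A$ where $A$ is the companion matrix displayed in (\ref{syst generique user-friendly}). Expanding along the last row (or, equivalently, along the first column) yields
\begin{equation*}
\det A \;=\; (-1)^{n+1}\,\left(-\frac{a_0}{a_n}\right) \;=\; (-1)^{n}\,\frac{a_0(z)}{a_n(z)}.
\end{equation*}
Hence $\det A$ is a monomial in $z$ if and only if $a_n/a_0$ is one, which is the hypothesis of the theorem. Since the assumption on the difference Galois group is already stated at the level of the system (\ref{syst generique user-friendly}) by the convention just above the theorem, both hypotheses of Theorem \ref{user friend theo syst} are in force. Part (1) of Theorem \ref{user friend theo syst} now gives part (1) of the present theorem word for word.

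For part (2), starting from a nonzero $f(z) \in \C((z))$ solution of (\ref{equa generique user-friendly}), I form the column vector
\begin{equation*}
u \;:=\; \bigl(f(z),\,f(z^{p}),\,\ldots,\,f(z^{p^{n-1}})\bigr)^{t} \;=\; \bigl(f,\phi(f),\ldots,\phi^{n-1}(f)\bigr)^{t}.
\end{equation*}
A direct check shows that $\phi(u) = A u$: the first $n-1$ rows are tautological shifts, and the last row encodes exactly the relation $a_n \phi^{n}(f) + \cdots + a_0 f = 0$, that is, equation (\ref{equa generique user-friendly}). Clearly $u \ne 0$ since its first coordinate $f$ is nonzero, and the entries lie in $\C((z))$. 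Applying part (2) of Theorem \ref{user friend theo syst} to $u$ yields the algebraic independence of $f(z), f(z^{p}),\ldots, f(z^{p^{n-1}})$ and all their successive derivatives over $\C(z)$, which is the desired conclusion.

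There is essentially no obstacle here beyond the elementary determinant computation; the theorem is a packaging of Theorem \ref{user friend theo syst} for the scalar (Mahler equation) case, and the reduction via the companion matrix is routine. The only points worth flagging are that the companion form ensures every solution of the scalar equation produces a solution of the vector system with iterates as its components (so the hypertranscendence of $f$ follows), and that the monomial hypothesis on $a_n/a_0$ is exactly the transfer of the monomial hypothesis on $\det A$ needed to force the parametrized Galois group of $\phi(y) = \det(A)\,y$ inside $\C^{\times}$ via Proposition \ref{propo:mahlerrang1homoge}.
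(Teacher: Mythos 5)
Your proposal is correct and follows exactly the paper's own argument: reduce to Theorem \ref{user friend theo syst} via the companion system, using that $\det A=(-1)^n a_0/a_n$ is a monomial precisely when $a_n/a_0$ is, and that a nonzero scalar solution $f$ yields the nonzero vector solution $(f,\phi(f),\dots,\phi^{n-1}(f))^t$. Your determinant computation is in fact slightly more careful than the paper's (which omits the sign $(-1)^n$, harmless here).
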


\begin{proof}
Using the fact that the determinant of the matrix $A$ given by formula (\ref{syst generique user-friendly}) is equal to $a_{0}/a_{n}$ and the fact that, if $f(z) \in \C((z))$ is a nonzero solution of (\ref{equa generique user-friendly}), then $(f(z), f(z^{p}),\ldots,f(z^{p^{n-1}}))^{t}$ is a nonzero solution of (\ref{syst generique user-friendly}) with entries in $\C((z))$, we see that this theorem is a consequence of Theorem \ref{user friend theo syst}.

\end{proof}

\subsection{The Baum-Sweet sequence} \label{section Baum Sweet}
The Baum-Sweet sequence $(a_{n})_{n\geq 0}$ is the automatic sequence defined by $a_{n} = 1$ if the binary representation of $n$ contains no block of consecutive $0$ of odd length, and $a_{n} = 0$ otherwise. It is characterized by the following recursive equations:
$$
a_{0}=1, \ \
a_{2n+1}=a_{n}, \ \
a_{4n}= a_{n}, \ \
a_{4n+2}=0.
$$
Let $f_{BS}(z)=\sum_{n\geq 0} a_{n} z^{n}$ be the corresponding generating series. The above recursive equations show that 
$$
Y(z)=\begin{pmatrix}
f_{BS}(z)  \\
f_{BS}(z^{2}) 
\end{pmatrix}
$$
satisfies the following Mahler system: 
\begin{equation}\label{syst BS}
\phi(Y)= 
A Y
\text{ where }
A = 
\begin{pmatrix}
0 & 1 \\
1 & -z
\end{pmatrix} \in \GL_{2}(\K).
\end{equation}
We have used the following notations: $p=2$, $\K= \cup_{j \geq 1} \C(z^{1/j})$ and $\phi$ is the field automorphism of $\K$ such that $\phi(z)=z^{2}$. 

\begin{theo}\label{Galois BS}
The parametrized difference Galois group of (\ref{syst BS}) over $\L$ is equal to $\mu_{4}\SL_{2}(\widetilde \C)$, where $\mu_{4} \subset \C^{\times}$ is the group of $4$th roots of the unity. The series $f_{BS}(z)$, $f_{BS}(z^{2})$ and all their successive derivatives are algebraically independent over $\C(z)$.
\end{theo}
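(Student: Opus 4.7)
The plan is to apply Theorem \ref{user friend theo syst} to the Baum--Sweet system \eqref{syst BS}, and then refine the conclusion by a separate computation of the determinant of the parametrized difference Galois group $G^{\delta}$.

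First I would verify the hypotheses of Theorem \ref{user friend theo syst}: by \cite[Section 9]{Ro15}, as recalled in the introduction, the classical difference Galois group of \eqref{syst BS} over $\K$ contains $\SL_2(\C)$; and $\det(A)=-1$ is a constant, hence a monomial. Theorem \ref{user friend theo syst} then yields two things at once. First, $G^{\delta}$ is a subgroup of $\C^{\times}\SL_2(\widetilde{\C})$ containing $\SL_2(\widetilde{\C})$. Second, since $(f_{BS}(z), f_{BS}(z^{2}))^{t}$ is a nonzero solution of \eqref{syst BS} with entries in $\C((z))$, part (2) of that theorem gives the algebraic independence of $f_{BS}(z), f_{BS}(z^{2})$ and all their successive derivatives over $\C(z)$. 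This already establishes the second assertion of the theorem, and reduces the first to pinpointing $G^{\delta}$ within the interval $[\SL_2(\widetilde{\C}), \C^{\times}\SL_2(\widetilde{\C})]$.

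To do so, I would compute $\det(G^{\delta})$. If $U$ is a fundamental matrix of solutions of \eqref{syst BS}, then $d:=\det(U)$ satisfies $\phi(d)=\det(A)d=-d$, so $\det(G^{\delta})$ coincides with the parametrized difference Galois group over $\L$ of the rank-one equation $\phi(y)=-y$. Any nonzero solution $v$ of this equation in a PPV ring satisfies $v^{2}\in\L^{\phi}=\widetilde{\C}^{\times}$, so $v$ is algebraic over $\L$ of degree at most two; normalizing $v^{2}=1$, the PPV ring is $\L[v]/(v^{2}-1)$ with parametrized Galois group $\mu_2$, acting by $v\mapsto\pm v$. Hence $\det(G^{\delta})=\mu_2$.

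Finally I would classify the Kolchin-closed subgroups $H$ of $\C^{\times}\SL_2(\widetilde{\C})$ containing $\SL_2(\widetilde{\C})$: since $-I\in\SL_2(\widetilde{\C})$, any such $H$ has the form $\Lambda\cdot\SL_2(\widetilde{\C})$ for a Kolchin-closed subgroup $\Lambda$ of $\C^{\times}$ containing $\mu_2$. Because $\C^{\times}$ consists of $\delta$-constants, its Kolchin-closed subgroups are its algebraic subgroups, namely $\C^{\times}$ itself and the finite groups $\mu_{k}$. From $\det(\Lambda\cdot\SL_2(\widetilde{\C}))=\Lambda^{2}$ together with the constraint $\Lambda^{2}=\mu_2$, the only possibility is $\Lambda=\mu_4$, giving $G^{\delta}=\mu_4\SL_2(\widetilde{\C})$. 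No serious obstacle is expected here: Theorem \ref{user friend theo syst} carries the main technical content, and what remains is a direct rank-one computation and a short group-theoretic classification.
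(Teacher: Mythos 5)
Your proposal is correct, and its backbone coincides with the paper's: both reduce the problem to Theorem \ref{user friend theo syst} (equivalently Theorem \ref{user friend theo}), which places the parametrized group between $\SL_2(\widetilde{\C})$ and $\C^{\times}\SL_2(\widetilde{\C})$ and already yields the hypertranscendence statement. Where you diverge is in pinning down the finite part $\mu_4$. The paper invokes the \emph{exact} classical difference Galois group $\mu_4\SL_2(\C)$ over $\K$ from \cite[Theorem 50]{Ro15}; after base change this group over $\L$ is $\mu_4\SL_2(\widetilde{\C})$, and since the parametrized group is Zariski-dense in it (Proposition \ref{propo:zarclosurePPvgaloisgroup}) and contains $\SL_2(\widetilde{\C})$, with finite quotient $\mu_4\SL_2/\SL_2\cong\mu_2$, equality follows. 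You instead use only the weaker input that the classical group \emph{contains} $\SL_2(\C)$, and recover the missing information by computing $\det(\Galdelta(\cQ_S/\L))$ directly as the parametrized Galois group of $\phi(y)=-y$ over $\L$, which is $\mu_2$ since a nonzero solution $v$ satisfies $v^2\in\L^{\phi}=\widetilde{\C}$ but $v\notin\L$; the classification of intermediate groups $\Lambda\,\SL_2(\widetilde{\C})$ with $\Lambda^2=\mu_2$ and $\mu_2\subseteq\Lambda$ then forces $\Lambda=\mu_4$. Both arguments are sound; yours trades the citation of the precise group computation in \cite{Ro15} for an elementary rank-one determinant computation, which is slightly more self-contained, while the paper's route is shorter given that the exact classical group is already known.
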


\begin{proof}
According to \cite[Theorem 50]{Ro15}, the difference Galois group of (\ref{syst BS}) over the $\phi$-field $\K$ is equal to $\mu_{4}\SL_{2}(\C)$. Now, the result is a direct consequence of Theorem~\ref{user friend theo}. 
\end{proof}

\subsection{The Rudin-Shapiro sequence} \label{section Rudin Shapiro}

The Rudin-Shapiro sequence $(a_{n})_{n \geq 0}$ is the automatic sequence defined by $a_{n}=(-1)^{b_{n}}$ where $b_{n}$ is the number of pairs of consecutive $1$ in the binary representation of $n$. It is the characterized by the following recurrence relations: 
$$
a_{0}=1,\ \
a_{2n}=a_{n}, \ \
a_{2n+1}=(-1)^{n} a_{n}.
$$
 
Let $f_{RS}(z)=\sum_{n \geq 0} a_{n}z^{n}$ be the corresponding generating series. 
The above recursive equations show that 
$$
Y(z)=\begin{pmatrix}
f_{RS}(z)  \\
f_{RS}(-z) 
\end{pmatrix}
$$
satisfies the following Mahler system:
\begin{equation}\label{syst RS}
\phi(Y)= 
A Y
\text{ where }
A = 
\frac{1}{2}\begin{pmatrix}
1 & 1 \\
\frac{1}{z} & -\frac{1}{z}
\end{pmatrix} \in \GL_{2}(\K).
\end{equation}
We have used the following notations: $p=2$, $\K= \cup_{j \geq 1} \C(z^{1/j})$ and $\phi$ is the field automorphism of $\K$ such that $\phi(z)=z^{2}$.

\begin{theo}\label{Galois RS}
The parametrized difference Galois group of (\ref{syst RS}) over $\L$ is equal to $\GL_{2}(\widetilde \C)$. The series $f_{RS}(z)$, $f_{RS}(-z)$ and all their successive derivatives are algebraically independent over $\C(z)$.
\end{theo}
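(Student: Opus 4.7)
The strategy parallels the proof of the Baum-Sweet case (Theorem~\ref{Galois BS}): first compute the classical difference Galois group over $\K$, then invoke the user-friendly hypertranscendence criterion to transfer the result to the parametrized setting.

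First, I would invoke \cite[Section~9]{Ro15}, which shows that the classical difference Galois group of \eqref{syst RS} over the $\phi$-field $\K$ contains $\SL_2(\C)$. A direct calculation gives $\det A = -\tfrac{1}{2z}$, and the classical Galois group over $\K$ of the rank-one equation $\phi(y) = -\tfrac{1}{2z}y$ is the full torus $\C^\times$: no nonzero power of $-\tfrac{1}{2z}$ can be written as $\phi(f)/f$ with $f \in \K^\times$, because any such logarithmic derivative has leading constant coefficient $1$ whereas $(-\tfrac{1}{2})^n \neq 1$ for $n \neq 0$. Combining these two observations pins down the classical Galois group of \eqref{syst RS} over $\K$ as $\GL_2(\C)$.

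Second, since $\det A$ is a monomial in $z$ and $(f_{RS}(z), f_{RS}(-z))^t$ is a nonzero solution of \eqref{syst RS} with entries in $\C((z))$, Theorem~\ref{user friend theo syst} applies. Its part~(2) immediately yields the algebraic independence over $\C(z)$ of $f_{RS}(z)$, $f_{RS}(-z)$ and all their successive derivatives, which establishes the transcendence half of the statement. Its part~(1) says that the parametrized Galois group $G^\delta$ of \eqref{syst RS} over $\L$ lies between $\SL_2(\widetilde{\C})$ and $\C^\times \SL_2(\widetilde{\C})$.

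Finally, to identify $G^\delta$ exactly, I would combine the preceding with Proposition~\ref{propo:zarclosurePPvgaloisgroup}, which asserts that $G^\delta$ is Zariski-dense in the classical Galois group over $\L$, namely $\GL_2(\widetilde{\C})$ (by base change of $\GL_2(\C)$ from $\K$ to $\L$). Any subgroup of $\C^\times \SL_2(\widetilde{\C})$ containing $\SL_2(\widetilde{\C})$ has the form $H \cdot \SL_2(\widetilde{\C})$ for some $H \subseteq \C^\times$, and Zariski-density of this subgroup in $\GL_2(\widetilde{\C})$ forces $H$ to be Zariski-dense in $\widetilde{\C}^\times$, which in turn forces $H = \C^\times$; this identifies $G^\delta$ unambiguously. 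The main obstacle is reliance on the classical Galois group computation from \cite{Ro15}; once that input is granted, the remaining steps are a direct combination of our user-friendly criterion and a standard Zariski-density argument, exactly as in the Baum-Sweet case.
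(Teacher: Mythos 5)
Your route is the paper's: quote \cite[Theorem 54]{Ro15} for the classical group over $\K$ and feed it into Theorem~\ref{user friend theo syst} (the paper's proof consists of exactly these two sentences), and your treatment of the algebraic independence of $f_{RS}(z)$, $f_{RS}(-z)$ and their derivatives via part~(2) is correct. The genuine problem is your final identification step. Part~(1) of Theorem~\ref{user friend theo syst} --- which you invoke, and which applies since $\det A=-\tfrac{1}{2z}$ is a monomial --- places $\Galdelta(\cQ_S/\L)$ \emph{inside} $\C^{\times}\SL_{2}(\widetilde{\C})$, and your own computation then yields $H=\C^{\times}$, i.e.\ $\Galdelta(\cQ_S/\L)=\C^{\times}\SL_{2}(\widetilde{\C})$. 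This group is a \emph{proper} subgroup of $\GL_{2}(\widetilde{\C})$: its elements have determinant in $(\C^{\times})^{2}=\C^{\times}$, whereas $\widetilde{\C}^{\times}\supsetneq\C^{\times}$ because a differentially closed field strictly contains its field of $\delta$-constants. So the group you actually obtain is not $\GL_{2}(\widetilde{\C})$, and the last sentence of your third paragraph ("this identifies $G^{\delta}$ unambiguously") is a non sequitur; what is true is only that $\C^{\times}\SL_{2}(\widetilde{\C})$ is Zariski-dense in $\GL_{2}(\widetilde{\C})$, which is why Proposition~\ref{propo:zarclosurePPvgaloisgroup} cannot be used to upgrade the containment. (This tension is already present in the source: the value $\GL_{2}(\widetilde{\C})$ printed in Theorem~\ref{Galois RS} is incompatible with part~(1) of Theorem~\ref{user friend theo syst}; the group that the paper's own machinery produces is $\C^{\times}\SL_{2}(\widetilde{\C})$.)

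Two smaller remarks. First, "Zariski-density\dots forces $H$ to be Zariski-dense in $\widetilde{\C}^{\times}$, which in turn forces $H=\C^{\times}$" is not valid as stated: a Zariski-dense subgroup of $\widetilde{\C}^{\times}$ contained in $\C^{\times}$ is merely infinite. You need the additional observation that $H=\Galdelta(\cQ_S/\L)\cap\C^{\times}I_{2}$ (equivalently, $\det\Galdelta(\cQ_S/\L)$, the parametrized Galois group of $\phi(y)=\det(A)y$) is Kolchin-closed; a Kolchin-closed subgroup of the constants $\C^{\times}$ is either finite or all of $\C^{\times}$, and infiniteness then gives $H=\C^{\times}$. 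Second, your preliminary derivation that the classical group over $\K$ is all of $\GL_{2}(\C)$ (combining $\SL_{2}(\C)\subset G$ with the leading-coefficient argument showing that no nonzero power of $-\tfrac{1}{2z}$ is a $\phi$-coboundary) is sound, though the paper simply cites \cite[Theorem 54]{Ro15} for the full equality.
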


\begin{proof}
According to \cite[Theorem 54]{Ro15}, the difference Galois group of (\ref{syst RS}) over the $\phi$-field $\K$ is equal to $\GL_{2}(\C)$. Now, the result is a direct consequence of Theorem~\ref{user friend theo syst}.
\end{proof}

\subsection{Direct sum of the Baum-Sweet and of the Rudin Shapiro equations} 

The aim of this section is to illustrate how one can use the results of this paper in order to prove the hyperalgebraic independence of Mahler functions solutions of distinct equations.

\subsubsection{A differential group theoretic preliminary result}\label{subsubsec:preliminarydiffderived group}

In what follows, we let $G^{\circ}$ denote the neutral component of the linear algebraic group $G$, and we let $G^{der}$ denote its derived subgroup. We recall that $G^{\circ}$ and $G^{der}$ are Zariski-closed in $G$. 

We let $G^{\circKol}$ denote the neutral component of the differential algebraic group $G$ (so, here, we consider Kolchin's topology), and we let $G^{der}$ denote the derived subgroup of $G$. In general, $G^{der}$ is not Kolchin-closed. We let $G^{\derKol}$ denote its Kolchin-closure in $G$. 

\begin{theo} \label{GKR differential} 
Let $\k$ be a differentially closed $\delta$-field. Let $r \geq 2$ be an integer and, for any $i \in \{1,\dots,r\}$, let $G_{i}$ be an algebraic subgroup of $\GL_{n_{i}}(\k)$. We consider the linear algebraic group $G=\prod_{i \in \{1,\dots,r\}} G_{i}$. We assume that, for any $i \in \{1,\dots,r\}$, $G_{i}^{\circ,der}$ is quasi-simple and that $G^{\circ,der}=\prod_{i \in \{1,\dots,r\}} G_{i}^{\circ,der}$. Let $H$ be a Zariski-dense differential algebraic subgroup of $G$. Let $H_{i}$ be the projection of $H$ in $G_{i} \subset G$. Then,  
\begin{enumerate}
\item for all $i \in \{1,\dots,r\}$, 
$H_{i}^{\circKol,\derKol}$ is Zariski-dense in $G_{i}^{\circ,der}$; 
\item we have: 
$$H^{\circKol,\derKol}=\prod_{i \in \{1,\dots,r\}} H_{i}^{\circKol,\derKol} \subset \prod_{i \in \{1,\dots,r\}} G_{i}^{\circ,der}.$$
\end{enumerate}
\end{theo}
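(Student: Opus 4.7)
The plan is to prove (1) via elementary arguments on Zariski-closures, and to establish (2) by invoking a Goursat--Kolchin--Ritt theorem for differential algebraic groups.

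For (1), the projection $\pi_i\colon G \to G_i$ sends the Zariski-dense subgroup $H$ onto the Zariski-dense subgroup $H_i$ of $G_i$. The Kolchin-connected component $H_i^{\circKol}$ is a Kolchin-closed normal subgroup of finite index in $H_i$; writing $H_i$ as a finite union of cosets of $H_i^{\circKol}$ and taking Zariski-closures gives that $\overline{H_i^{\circKol}}$ is of finite index in $\overline{H_i}=G_i$, and therefore contains $G_i^\circ$. I then apply the standard fact that, for any Zariski-dense subgroup $K$ of a connected linear algebraic group $\Gamma$, the derived subgroup $[K,K]$ is Zariski-dense in $\Gamma^{der}$. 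Taking $K = H_i^{\circKol}$ and $\Gamma=G_i^\circ$ yields that $[H_i^{\circKol},H_i^{\circKol}]$ is Zariski-dense in $G_i^{\circ,der}$, and its Kolchin-closure $H_i^{\circKol,\derKol}$ is then also Zariski-dense in $G_i^{\circ,der}$.

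For (2), set $L:=H^{\circKol,\derKol}$. The inclusion $L\subset \prod_i H_i^{\circKol,\derKol}$ is straightforward: the image $\pi_i(H^{\circKol})$ is Kolchin-connected and, since $H^{\circKol}$ is Kolchin-closed normal of finite index in $H$, it is of finite index in $H_i$, hence equals $H_i^{\circKol}$; taking derived subgroups and Kolchin-closures commutes with projections to a product, which gives the claim. For the reverse inclusion, I apply (1) to $H$ itself as a Zariski-dense subgroup of $G$ to conclude that $L$ is Zariski-dense in $G^{\circ,der}=\prod_i G_i^{\circ,der}$. Thus $L$ is a Kolchin-connected perfect differential algebraic subgroup of $\prod_i G_i^{\circ,der}$, Zariski-dense in the ambient product, with projections $H_i^{\circKol,\derKol}$ onto each quasi-simple factor. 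Inducting on $r$ and invoking a differential Goursat--Kolchin--Ritt argument, resting on Cassidy's classification of Zariski-dense differential subgroups of a quasi-simple algebraic group, one concludes that $L$ must equal the full product of its projections; the alternative ``diagonal'' subgroups arising from differential isomorphisms between distinct factors have strictly smaller Zariski-dimension than $\prod_i G_i^{\circ,der}$, contradicting the Zariski-density hypothesis.

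The main obstacle is the Goursat--Kolchin--Ritt step, which requires careful interaction between the quasi-simplicity of the factors $G_i^{\circ,der}$, the Zariski-density hypothesis on $H$, and the Kolchin-topological structure (in particular, that the Kolchin-closure of the derived subgroup of a Kolchin-connected group is again Kolchin-connected and perfect). The remainder is routine bookkeeping around Kolchin-connected components, derived subgroups, and their compatibility with projections to a direct product.
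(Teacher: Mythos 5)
Your argument is correct and follows essentially the same route as the paper: first the Zariski-density of $H_{i}^{\circKol,\derKol}$ in $G_{i}^{\circ,der}$ via projections, finite-index/connectedness bookkeeping and the standard fact on commutators of Zariski-dense subgroups, then a Goursat--Kolchin--Ritt decomposition of $H^{\circKol,\derKol}$ inside $\prod_{i} G_{i}^{\circ,der}$. The paper simply cites Cassidy's Theorem 15 for that last step, which is precisely the statement your diagonal-elimination sketch reconstructs (the exclusion of Zariski-dense ``twisted diagonals'' ultimately resting, in characteristic zero, on the fact that differential isomorphisms between quasi-simple groups are algebraic), so there is no gap.
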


\begin{proof}
By hypothesis, $H$ is Zariski-dense in $G$ and, hence, $H_{i}$ is Zariski-dense in $G_{i}$ (because the projection $p_{i} : G \rightarrow G_{i}$ is continuous for the Zariski topology and, hence, $G_{i}=p_{i}(G)=p_{i}(\overline{H}) \subset \overline{p_{i}(H)}$). Therefore, $H^{\circKol,\derKol}$ is Zariski-dense in $G^{\circ,der}=\prod_{i \in \{1,\dots,r\}} G_{i}^{\circ,der}$ and $H_{i}^{\circKol,\derKol}$ is Zariski-dense in $G_{i}^{\circ,der}$. Recall that the $G_{i}^{\circ,der}$ are quasi-simple by hypothesis. It follows from \cite[Theorem 15]{CassidyCSSDAG} that 
$$
H^{\circKol,\derKol}=\prod_{i \in \{1,\dots,r\}} K_{i}
$$
for some $\delta$-closed subgroups $K_{i}$ of $ G_{i}^{\circ,der}$.
(With the terminologies of \cite[Theorem 15]{CassidyCSSDAG}, the simple components $A_{i}$ of $G^{\circ,der}$ are the $\{1\}^{i-1} \times G_{i}^{\circ,der} \times \{1\}^{r-i-1}$). We necessarily have $K_{i}=H_{i}^{\circKol,\derKol}$. 
\end{proof}
\subsubsection{Baum-Sweet and Rudin-Shapiro}

\begin{theo}\label{BS plus RS}
The parametrized difference Galois group of the direct sum of the systems (\ref{syst BS}) and (\ref{syst RS}) is equal to the direct product of the parametrized difference Galois groups of the systems (\ref{syst BS}) and (\ref{syst RS}), namely $\mu_{4}\SL_{2}(\widetilde \C) \times \GL_{2}(\widetilde \C)$. The series $f_{BS}(z),f_{BS}(z^{2}),f_{RS}(z)$, $f_{RS}(z^{2})$ and all their successive derivatives are algebraically independent over $\C(z)$.
\end{theo}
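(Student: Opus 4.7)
The plan is to identify the parametrized difference Galois group $H$ of the direct sum system $\phi(Y)=(A_{BS}\oplus A_{RS})Y$ over $\L$ as the full product $\mu_{4}\SL_{2}(\widetilde{\C})\times \GL_{2}(\widetilde{\C})$ of the individual parametrized groups, and then to deduce the hyperalgebraic independence by an orbit--stabilizer $\delta$-dimension count, in the spirit of the proof of Theorem~\ref{user friend theo syst}. Let $G$ denote the classical difference Galois group of the direct sum over $\K$. Restriction to each block places $G$ as a subdirect product in $G_{BS}\times G_{RS}=\mu_{4}\SL_{2}(\C)\times \GL_{2}(\C)$, and similarly $H$ sits as a subdirect $\delta$-subgroup of $\mu_{4}\SL_{2}(\widetilde{\C})\times \GL_{2}(\widetilde{\C})$ surjecting onto each factor.

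The first step, which I expect to be the main technical hurdle, is to establish the classical identity $G=G_{BS}\times G_{RS}$. Passing to derived subgroups, $G^{der}$ is a subdirect product of $\SL_{2}(\C)\times \SL_{2}(\C)$, so by Goursat's lemma together with the near-simplicity of $\SL_{2}$, either $G^{der}$ equals the whole product, or (modulo the common $\{\pm I\}$) $G^{der}$ is the graph of an inner automorphism of $\SL_{2}(\C)$. I would rule out this twisted-diagonal case by a local analysis at $z=0$: such a graph would produce some $P\in\GL_{2}(\K)$ projectively gauge-transforming one system into the other, but the local behaviors of the two systems are incompatible, as $\det A_{BS}=-1$ is constant while $\det A_{RS}=-1/(2z)$ has a simple pole at the origin. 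Alternatively, one could directly invoke the rank-$2$ classification of Mahler equations with large classical Galois group from \cite{Ro15} to conclude that BS and RS are classically inequivalent.

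Once $G=G_{BS}\times G_{RS}$ is in hand, I apply Theorem~\ref{GKR differential} to the Zariski-dense $\delta$-subgroup $H\subset G$, with factors $G_{1}=\mu_{4}\SL_{2}(\C)$ and $G_{2}=\GL_{2}(\C)$ whose connected derived subgroups are both the quasi-simple group $\SL_{2}(\C)$. Combined with Theorems~\ref{Galois BS} and~\ref{Galois RS}, this gives
\[
H^{\circKol,\derKol} \;=\; \SL_{2}(\widetilde{\C})\times \SL_{2}(\widetilde{\C}).
\]
To lift to $H$ itself, I would analyze the abelian quotient $H/H^{\circKol,\derKol}$: it is a $\delta$-subgroup of $\mu_{4}\times \widetilde{\C}^{\times}$ surjecting onto each factor, and a Goursat argument at the abelian level, using that any common algebraic quotient of the finite group $\mu_{4}$ and of the divisible connected group $\widetilde{\C}^{\times}$ is trivial, forces this quotient to equal the full product. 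Hence $H=\mu_{4}\SL_{2}(\widetilde{\C})\times \GL_{2}(\widetilde{\C})$, of $\delta$-dimension~$7$.

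For the hyperalgebraic independence of $f_{BS}(z), f_{BS}(z^{2}), f_{RS}(z), f_{RS}(z^{2})$ over $\C(z)$, I would argue by orbit--stabilizer counting. Up to the linear identity $f_{RS}(z^{2})=\frac{1}{2}(f_{RS}(z)+f_{RS}(-z))$, these four series form the first columns of fundamental matrices for BS and RS. The stabilizer in $H$ of this pair of column vectors is the product of the stabilizers of a nonzero vector in $\mu_{4}\SL_{2}(\widetilde{\C})$ and in $\GL_{2}(\widetilde{\C})$, which have $\delta$-dimensions $1$ and $2$ respectively. So the orbit has $\delta$-dimension $7-3=4$, equal to the number of series; they must therefore be hyperalgebraically independent over $\L$, and a fortiori over $\C(z)$.
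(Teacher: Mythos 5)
Your overall architecture is the same as the paper's: take the classical identity $G=G_{BS}\times G_{RS}$ from \cite{Ro15}, feed Theorems \ref{GKR differential}, \ref{Galois BS} and \ref{Galois RS} into it to get $H^{\circKol,\derKol}=\SL_{2}(\widetilde{\C})\times\SL_{2}(\widetilde{\C})$, lift to $H$ itself, and finish with a $\delta$-dimension count. The two places where you supply detail that the paper leaves implicit --- the Goursat argument on the abelian quotient $\mu_{2}\times\widetilde{\C}^{\times}$ (using that a divisible connected group has no nontrivial finite quotient) to conclude $H=\mu_{4}\SL_{2}(\widetilde{\C})\times\GL_{2}(\widetilde{\C})$, and the orbit--stabilizer computation $7-3=4$ for the hyperalgebraic independence --- are both correct; the paper instead says the lifting ``is now clear'' and runs the independence step as a counting-by-contradiction argument as in Theorem \ref{user friend theo syst}, but these are equivalent in substance.

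The one genuine flaw is your proposed ``local analysis'' for ruling out the twisted-diagonal case of Goursat. Comparing $\det A_{BS}=-1$ with $\det A_{RS}=-\tfrac{1}{2z}$ does not work: a projective equivalence over $\K$ would only require $\det A_{RS}=\lambda^{2}\det A_{BS}\,\phi(\det P)/\det P$ for some $\lambda\in\K^{\times}$, $P\in\GL_{2}(\K)$, and since $\K=\cup_{j\geq 1}\C(z^{1/j})$ contains $z^{-1/2}$, the choice $\lambda=\tfrac{1}{\sqrt{2}}z^{-1/2}$, $\det P=1$ kills the determinant obstruction entirely. So the simple pole of $\det A_{RS}$ at the origin is not by itself an incompatibility, and this route would fail. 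Your fallback --- invoking the classification in \cite{Ro15} --- is exactly what the paper does (it cites \cite[Section 9.3]{Ro15} for the fact that the classical group of $M_{BS}\oplus M_{RS}$ is the full product $\mu_{4}\SL_{2}(\C)\times\GL_{2}(\C)$), so the proof survives, but you should treat that citation as the argument rather than as an optional alternative.
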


\begin{proof}
We let $M_{BS}$ and $M_{RS}$ denote the $\phi$-modules associated to the systems (\ref{syst BS}) and (\ref{syst RS}). 

It is proved in \cite[Section 9.3]{Ro15} that the difference Galois group over $\K$ of the direct sum $M_{BS} \oplus M_{RS}$ is the direct product of the difference Galois groups, {\it i.e.}, $\mu_{4} \SL(\C) \times \GL_{2}(\C)$. If follows from Theorems \ref{GKR differential}, \ref{Galois BS}, and \ref{Galois RS}, that the parametrized difference Galois group of $M_{BS} \oplus M_{RS}$ contains $\SL_{2}(\widetilde \C) \times \SL_{2}(\widetilde \C)$. The fact that the parametrized difference Galois group of  $M_{BS} \oplus M_{RS}$ is $\mu_{4}\SL_{2}(\widetilde \C) \times \GL_{2}(\widetilde \C)$ is now clear. 

The proof of the last assertion is similar to the proof of the last statement of Theorem \ref{user friend theo}. 
\end{proof}

\bibliographystyle{alpha}
\bibliography{biblio}
\end{document}